\documentclass[12pt,a4paper]{article}
\RequirePackage{etex}

\usepackage[utf8]{inputenc}
\usepackage{amsmath, amssymb, amsthm, amsfonts, mathrsfs, systeme}
\usepackage{bbm, commath}
\usepackage{multicol}
\usepackage{graphicx}
\usepackage{tikz-network}
\usetikzlibrary {positioning, patterns,calc}
\usetikzlibrary{graphs, graphs.standard,quotes}
\usepackage{authblk}
\usepackage{enumitem}
\usepackage[colorlinks=true,linkcolor=black,citecolor=black]{hyperref}

\usepackage{lineno}
\modulolinenumbers[1]

\newtheorem{thm}{Theorem}

\newtheorem{lem}{Lemma}
\newtheorem{cor}{Corollary}
\newtheorem{exm}{Example}
\newtheorem{rem}{Remark}

\def \e {{\mathbf e}}

\def \x {{\bf x}}
\def \y {{\bf y}}
\def \z {{\bf z}}
\def \w {{\bf w}}

\def \o {{\bf 0}}
\def \a {{\bf a}}
\def \b {{\bf b}}
\def \c {{\bf c}}
\def \v {{\bf v}}

\newcommand{\ob}[1]{\left(#1\right)}
\newcommand{\cb}[1]{\left\lbrace #1\right\rbrace}
\newcommand{\tb}[1]{\left[#1\right]}

\title{\it Laplacian state transfer in graphs with involutions}

\author[1]{Swornalata Ojha\thanks{Email: 523ma1002@nitrkl.ac.in}}
\author[2]{Hermie Monterde\thanks{Email:hermie.monterde@uregina.ca}}
\author[1]{Hiranmoy Pal\thanks{Email: palh@nitrkl.ac.in}}
\affil[1]{\small Department of Mathematics, National Institute of Technology Rourkela, India-769008}
\affil[2]{\small Department of Mathematics and Statistics, University of Regina, SK, Canada S4S 0A2}
\date{\small {\today}}

\begin{document}
\maketitle


\begin{abstract}

For $q\in\mathbb{R}\backslash\{0\}$, the generalized Laplacian of a graph $X$ is the matrix $\mathscr{L}=\Delta+qA$, where $\Delta$ is the degree matrix and $A$ is the adjacency matrix of $X$. In this paper, we investigate perfect state transfer (PST) on graphs with possible loops equipped with non-trivial involutions, where we take the generalized Laplacian matrix as the Hamiltonian of the underlying spin network. We establish an equivalence between the existence of PST between certain pair (or plus states) in such a graph and PST between vertices in a subgraph induced by the involution. This allows us to prove that for almost all simple unweighted planar graphs (resp., almost all simple unweighted trees), the assignment of loops of weight one to exactly two vertices in the graph produces PST between pair states relative to $\mathscr{L}$.
We also show that a path on $n$ vertices admits PST between end vertices relative to $\mathscr{L}$ if and only if $n =2$, or $(n,q)=(3,\frac{k^2-l^2}{8l^2})$ where $k>l$ are integers with $k \not\equiv l \pmod{2}$. For cycles, we show that the addition of an extra edge does not yield PST between vertices relative to Laplacian and signless Laplacian matrices. Furthermore, we show that the addition of a few suitable edges (including loops) in complete bipartite graphs, cycles, and paths yields PST between pair states.\\

\noindent {\it Keywords:} graph spectra, continuous quantum walk, perfect state transfer, graph with involution, adjacency matrix, generalized Laplacian matrix.\\

\noindent {\it MSC: 15A16, 05C50, 81P45.}
\end{abstract}


\section{Introduction}
Let $X = (V, E, w)$ be a weighted graph on $n$ vertices, where $w: E \rightarrow \mathbb{R}^+$ is a function that assigns a positive real weight to each edge of $X$. The \emph{adjacency matrix} $A=A(X) \in \mathbb{R}^{n \times n}$ of $X$ is the matrix indexed by the vertices of $X$ such that $A_{ij}=w(i,j)$ if $(i,j) \in E$ and $A_{ij}=0$ otherwise. We say that $X$ is \textit{unweighted} if $A$ is a 0-1 matrix, and \textit{simple} (no loops) if $A$ has zero diagonal. The \emph{degree matrix} $\Delta=\Delta(X)$ of $X$ is the diagonal matrix of row sums of $A(X)$. The \emph{Laplacian matrix} $L(X)$ and \emph{signless Laplacian matrix} $Q(X)$ of $X$ are defined by $L(X) = \Delta(X) - A(X)$ and $Q(X) = \Delta(X) + A(X)$, respectively. We also denote the cycle, complete graph and path on $n$ vertices by $C_n$, $K_n$ and $P_n$, respectively, and the complete bipartite graphs with partite sets of sizes $m$ and $n$ by $K_{m,n}$.

A (continuous) quantum walk on $X$ relative to a real symmetric matrix $M$ associated with $X$ is described by the transition matrix
\[
U_{M}(t) := \exp{\ob{i t M}},
\]
where $i^2=-1$ and $t \in \mathbb{R}$. We mainly consider the quantum walk relative to the Laplacian and signless Laplacian matrices of $X.$ However, the results naturally extend to the \emph{generalized Laplacian matrix} $\mathscr{L}(X)$ of $X$ with a non-zero real parameter $q$, defined by
\begin{equation*}
 \mathscr{L}(X) = \Delta(X) +qA(X).
\end{equation*}
Note that $\mathscr{L}(X)=L(X)$ when $q = -1$ and $\mathscr{L}(X)=Q(X)$ when $q = 1$. If the context is clear, then we simply write $\mathscr{L}(X)$, $L(X)$, and $Q(X)$ as $\mathscr{L}$, $L$, and $Q$, respectively.

A \textit{pure (quantum) state} is a one dimensional subspace of $\mathbb{C}^n$ represented by a unit complex vector. We say that a pure state $\x\in\mathbb{C}^n$ is \emph{real} if all its entries are real. We say that \emph{perfect state transfer (PST)} occurs between real pure states $\x$ and $\y$ if 
\[U_\mathscr{L}(\tau) \x = \gamma \y
\] 
for some time $\tau$ and $\gamma \in \mathbb{C}$. A pure state $\x$ is \emph{periodic} in $X$ if PST occurs from $\x$ to itself. For $a\in V(X)$, the pure state $\e_a$ is called a \emph{vertex state}. An \emph{$s$-pair state} is a pure state of the form $\frac{1}{\sqrt{1 + s^2}} \left( \e_a + s \e_b \right)$ for some $s\in\mathbb{R}\backslash\{0\}$ \cite{kim}. A $(-1)$-pair state and a $1$-pair state are referred to resp. as a \emph{pair state} and \emph{plus state}. If $\x$ and $\y$ are both vertex states or pair states, then we resp. use \emph{vertex PST} or \emph{pair PST} to describe PST between $\x$ and $\y$.

Since vertex PST is a rare phenomenon \cite{god2}, a relaxation called \emph{pretty good state transfer} (PGST) was introduced in \cite{god1, vin}. Subsequently, PGST between real pure states was considered in \cite{pal9}: PGST occurs between linearly independent pure states $\x$ and $\y$ in $X$ if there exists a sequence $\{\tau_k\}\subset \mathbb{R}$ such that
\[
\lim_{k \to \infty} U_\mathscr{L}\ob{\tau_k} \x = \gamma \y,
\]
where $\gamma \in \mathbb{C}$ has unit modulus. The existence of PST between real pure states is \emph{monogamous}: if a state $\x$ admits PST to both states $\y$ and $\z$, then necessarily $\y = \z$ \cite[Lemma 5.1(3)]{god25}. However, this monogamy fails for PGST, as demonstrated in \cite[Example 4.1]{pal5}.

The concept of a sedentary vertex was formalized in \cite{mont}. Subsequently, sedentary real pure states was investigated in \cite{pal9}. A pure state $\x$ is \emph{sedentary} in $X$ if there exists a constant $0 < C \leq 1$ such that
\begin{equation*}
\inf_{t > 0} \left| \x^T U_\mathscr{L}(t) \x \right| \geq C.
\end{equation*}
It is worth mentioning that a sedentary pure state does not exhibit PGST.

This work is a sequel to the work in \cite{sarojini3, pal7}, where we consider the generalized Laplacian matrix as the Hamiltonian of the quantum walk. We organize this paper as follows. In Section \ref{sec2}, we show that the characteristic polynomial of the generalized Laplacian matrix $\mathscr{L}$ of a graph with involution factors into characteristic polynomials of submatrices of $\mathscr{L}$ induced by the subgraph that is not fixed by the involution (Lemma \ref{L1}). In Section \ref{sec21}, we establish a connection between state transfer involving pair states (resp., plus states) in graph $X$ and state transfer involving vertex states in a subgraph of $X$ induced by a nontrivial involution. Section \ref{sec:inf} is devoted to infinite families of relatively sparse graphs that admit pair PST relative to the generalized Laplacian matrix. In particular, we prove that for almost all simple unweighted planar graphs (resp., almost all simple unweighted trees), the assignment of loops of weight one to exactly two vertices in the graph produces PST between pair states relative to the generalized Laplacian matrix. For complete bipartite graphs, we show in Section \ref{sec3} that the deletion of a matching and possibly the addition of suitable edges yields pair PST in the resulting graph relative to $\mathscr{L}$ (Theorem \ref{T3}). Section \ref{sec4} is an analysis of vertex PST and pair PST in cycles with a single edge perturbation. We show that a cycle with an extra edge in general does not admit (i) vertex PST relative to $L$ and $Q$ (Theorem \ref{T4}) and (ii) pair PST relative to $Q$ (Theorem \ref{Q}). Nevertheless, we demonstrate the occurrence of pair PST in cycles modified by the addition of more edges, including loops (Theorems \ref{9} and \ref{10}). Finally, in Section \ref{sec5}, we investigate vertex PST and pair PST in paths with loops at the end vertices. In particular, we show that $P_n$ admits vertex PST between end vertices relative to $\mathscr{L}$ if and only if $n =2$, or $(n,q)=(3,\frac{k^2-l^2}{8l^2})$ where $k>l$ are integers with $k \not\equiv l \pmod{2}$. (Theorem \ref{T9}). We also show that for each $n\geq 3$, there exists $q \in \mathbb{R}$ such that $P_n$ admits vertex PGST relative to $\mathscr{L}$ between the end vertices (Theorem \ref{omega}). The remainder of this section is allotted to basic definitions and notations.
 
Let $X$ be an undirected weighted graph on $n$ vertices, possibly with loops. Let $\theta_1 < \theta_2 < \cdots < \theta_d$ be the distinct eigenvalues of $\mathscr{L}$, and let $F_j$ be the corresponding orthogonal projection matrix associated with $\theta_j$. The spectral decomposition of  $U_\mathscr{L}(t)$ is given by
\[U_\mathscr{L}(t) = \exp{\left( i t \mathscr{L} \right)} = \sum_{j=1}^d e^{i t \theta_j} F_j.\]
For each $t\in\mathbb{R}$, $U_\mathscr{L}(t)$ is a complex symmetric unitary matrix that can be expressed as a polynomial in $\mathscr{L}$. The \emph{(eigenvalue) support} $\Lambda_\x(\mathscr{L})$ of a pure state $\x\in\mathbb{R}^n$ is the set
\begin{equation*}
\Lambda_\x(\mathscr{L}) = \left\{ \theta_j : F_j \x \neq 0 \right\}.
\end{equation*}
A pure state $\x$ is a \emph{fixed state}  if $|\Lambda_\x(\mathscr{L})| = 1$. A fixed state does not admit PST. Two linearly independent states $\x$ and $\y$ are \emph{strongly cospectral} if $F_j \x = \pm F_j \y$
for each $\theta_j \in \Lambda_\x(\mathscr{L})$. As shown in \cite[Lemma 5.1(1)]{god25}, strong cospectrality is a necessary condition for PST to occur between real pure states $\x$ and $\y.$

Note that the Laplacian and signless Laplacian matrices of a graph are positive semidefinite. Hence, we shall need a special case of \emph{Cauchy's interlacing theorem} where a symmetric matrix is perturbed with a positive semidefinite matrix of rank one.

\begin{thm}\cite{so}\label{T1}
Let $B$ and $C$ be two symmetric matrices. Let $\alpha_1 \geq \cdots \geq \alpha_n$ and $\gamma_1  \geq \cdots \geq \gamma_n $ be the eigenvalues of $B$ and $B+C,$ respectively. If $C$ is a positive semidefinite matrix of rank one, then
$\gamma_1 \geq \alpha_1 \geq \gamma_2 \geq \cdots\geq \alpha_{n-1} \geq \gamma_n \geq \alpha_n.$
\end{thm}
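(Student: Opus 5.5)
The plan is to obtain the interlacing from the Courant--Fischer min-max characterization of eigenvalues. Write $C = \c\c^T$ for some vector $\c\in\mathbb{R}^n$; this is possible because $C$ is a positive semidefinite symmetric matrix of rank one. For a symmetric matrix $M$ with eigenvalues $\mu_1\geq\cdots\geq\mu_n$, Courant--Fischer gives
\[
\mu_k=\max_{\dim S=k}\ \min_{\x\in S,\ \|\x\|=1}\x^TM\x=\min_{\dim T=n-k+1}\ \max_{\x\in T,\ \|\x\|=1}\x^TM\x .
\]
The chain of inequalities splits into two families, $\gamma_k\geq\alpha_k$ for $1\leq k\leq n$ and $\alpha_k\geq\gamma_{k+1}$ for $1\leq k\leq n-1$. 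I would prove each family separately.

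For $\gamma_k\geq\alpha_k$, I use that $\x^T(B+C)\x=\x^TB\x+(\c^T\x)^2\geq \x^TB\x$ for every $\x$. Applying the max-min formula, each inner minimum for $B+C$ is at least the corresponding one for $B$. Taking maxima over $k$-dimensional subspaces then gives $\gamma_k\geq\alpha_k$.

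For $\alpha_k\geq\gamma_{k+1}$, the rank-one structure is what matters. Let $S$ be any $(k+1)$-dimensional subspace. Then $S\cap \c^{\perp}$ has dimension at least $k$, and on this intersection the two quadratic forms $\x^T(B+C)\x$ and $\x^TB\x$ coincide. This gives
\[
\min_{\x\in S,\ \|\x\|=1}\x^T(B+C)\x\ \leq\ \min_{\x\in S\cap \c^\perp,\ \|\x\|=1}\x^TB\x\ \leq\ \alpha_k ,
\]
where the last step uses max-min for $B$, after restricting to a $k$-dimensional subspace of $S\cap\c^\perp$ if the dimension is larger. Maximizing over $S$ gives $\gamma_{k+1}\leq\alpha_k$.

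Combining the two families yields the full chain $\gamma_1\geq\alpha_1\geq\gamma_2\geq\cdots\geq\alpha_{n-1}\geq\gamma_n\geq\alpha_n$. The only delicate step is the dimension count $\dim(S\cap\c^\perp)\geq k$, together with handling the degenerate case $\c=0$. That case cannot occur, since $C$ has rank one. Everything else follows directly from Courant--Fischer, which I take as standard.
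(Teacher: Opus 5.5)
Your proof is correct and complete. The paper itself gives no proof of this statement: it is quoted from \cite{so} as a known special case of Cauchy interlacing. It is used only as a black box in Section~\ref{sec4}, to conclude that a multiplicity-two eigenvalue $\theta_j$ of $M$ is still an eigenvalue of $M+\rho\w\w^T$, since $\alpha_k=\alpha_{k+1}$ forces $\gamma_{k+1}=\alpha_k$.

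Your Courant--Fischer argument is the standard self-contained route, and both halves are sound:
\begin{itemize}
\item the bound $(\c^T\x)^2\geq 0$ gives $\gamma_k\geq\alpha_k$;
\item the count $\dim(S\cap\c^\perp)\geq k$ gives $\gamma_{k+1}\leq\alpha_k$, including the restriction to a $k$-dimensional subspace when $S\subseteq\c^\perp$.
\end{itemize}

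You do not need to rule out $\c=\o$. The dimension count holds trivially in that case, so your argument actually proves the chain for any positive semidefinite $C$ of rank at most one.

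Two equivalent routes, had you wanted them:
\begin{itemize}
\item Weyl's inequality $\lambda_{i+j-1}(B+C)\leq\lambda_i(B)+\lambda_j(C)$ with $j=2$ and $\lambda_2(C)=0$.
\item The secular equation $\det(xI-B-\c\c^T)=\det(xI-B)\bigl(1-\c^T(xI-B)^{-1}\c\bigr)$, which also pins down where the perturbed eigenvalues lie.
\end{itemize}
Neither is needed for what the paper uses.
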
 

\section{Spectra of graphs with involutions}\label{sec2}

A nontrivial \textit{involution} of a graph is an automorphism of order two.
State transfer on graphs with involutions relative to the adjacency matrix was studied in \cite{kempton}, with emphasis on vertex PGST in graphs with loops. We adapt some of the techniques developed therein to obtain a characterization of pair PST in graphs with an involution relative to the generalized Laplacian matrix. 

Let $X$ be a simple graph with a non-trivial involution $\phi: V(X) \to V(X)$ where $\phi^2(v)=v$ for every vertex $v.$ Suppose each $v\in V(X)$ is assigned a loop of weight $\omega(v)\in\mathbb{R}$ such that $\omega(v) = \omega(\phi(v))$ for all $v \in V(X).$ Let $S=\cb{v \in V(X) ~|~ \phi(v)=v}$ be the set of all vertices fixed by $\phi$. Let $T\subseteq V(X)\backslash S$ be a set such that $T$ contains exactly one vertex from each orbit $(v, \phi(v)),$ for all $v \in V(X)\backslash S.$ The subgraph $G$ of $X$ induced by $T$ with the loops inherited by restricting $\eta$ to $T$ is called a \textit{half graph $G$ induced by $\phi$}. Note that $|V(X)|=2|T|+|S|$. Now, let $H$ be the subgraph of $X$ induced by $S$. With a suitable labeling of the vertices of $X,$ the generalized Laplacian matrix can be written as
\[\mathscr{L}=
\begin{bmatrix}
	\mathscr{L}_G& qA_\phi& qA_S\\
	qA_\phi &\mathscr{L}_G & qA_S\\
	qA_S^T & qA_S^T & \mathscr{L}_H
\end{bmatrix},\]
where 
$\mathscr{L}_G=\Delta_{G}+qA(G)$, $\mathscr{L}_H=\Delta_{S}+qA(H)$, $A(G)$ and $A(H)$ are adjacency matrices of $G$ and $H$ as simple graphs, $\Delta_{G}$ and $\Delta_{S}$ are diagonal matrices containing the degrees of vertices in $T$ and $S$ in $X$ (including the loops), $A_\phi$ is a symmetric $|V(G)|\times |V(G)|$ matrix with 01-entries that describes the edges across the involution, and $A_S$ is a $|V(G)|\times |V(H)|$ matrix with 01-entries that describes the edges between $G$ and $H$.

\begin{rem}
Note that $\mathscr{L}_G$ and $\mathscr{L}_H$ are not necessarily the generalized Laplacian matrices of $G$ and $H$ because $\Delta_{G}$ and $\Delta_S$ are not the degree matrices of $G$ and $H$, respectively.
\end{rem}

By way of example, the generalized Laplacian matrix of the graph in Figure \ref{f1} with involution $(1~3)(2~4)$ and loops on vertices 1 and 3 is given by
\[
\mathscr{L} =
\left[
\begin{array}{c|c|c}
\begin{array}{cc} 3+\alpha & q \\ q & 3 \end{array} &
\begin{array}{cc} q & 0 \\ 0 & q \end{array} &
\begin{array}{c} q \\ q \end{array} \\
\hline
\begin{array}{cc} q & 0 \\ 0 & q \end{array} &
\begin{array}{cc} 3+\alpha & q \\ q & 3 \end{array} &
\begin{array}{c} q \\ q \end{array} \\
\hline
\begin{array}{cc} q & q \end{array} &
\begin{array}{cc} q & q \end{array} &
\begin{array}{c} 4 \end{array}
\end{array}
\right].
\]

\begin{figure}
\centering
\begin{tikzpicture}[scale=0.55]
    \node[thick,circle,draw,inner sep=1pt] (C) at (0,0) {\scriptsize$2$};
    \node[thick,circle,draw,inner sep=1pt] (D) at (2,0) {\scriptsize$4$};
    \node[thick,circle,draw,inner sep=1pt] (E) at (0,2) {\scriptsize$1$};
    \node[thick,circle,draw,inner sep=1pt] (F) at (2,2) {\scriptsize$3$};
    \node[thick,circle,draw,inner sep=1pt] (A) at (1,1) {\scriptsize$5$};

    \draw[thick] (E) to[out=170, in=100, looseness=12] node[above] {\small $\alpha$} (E);
	\draw[thick] (F) to[out=80, in=10, looseness=12] node[above] {\small $\alpha$} (F);

    \draw[thick] (C) -- (D);
    \draw[thick] (C) -- (E);
    \draw[thick] (D) -- (F);
    \draw[thick] (E) -- (F);
    \draw[thick] (A) -- (D);
    \draw[thick] (A) -- (E);
    \draw[thick] (A) -- (F);
    \draw[thick] (A) -- (C);
\end{tikzpicture}
\caption{A wheel graph with loops on vertices 1 and 3 of weight $\alpha$.}
\label{f1}
\end{figure}
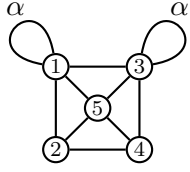

The spectra of graphs with involutions relative to the adjacency matrix is given in \cite[Lemma 2]{kempton}. We provide an analogous observation for the generalized Laplacian matrix.

\begin{lem} \label{L1}
Let $X$ be a graph with a non-trivial involution $\phi.$ Then the characteristic polynomial of the generalized Laplacian matrix $\mathscr{L}$ of $X$ factors into $P_+(x)$ and $P_-(x)$, which are, respectively, the characteristic polynomials of $$\mathscr{L}_+:=\begin{bmatrix}
		\mathscr{L}_G+qA_\phi& qA_S\\
		2qA_S^T &\mathscr{L}_H
	\end{bmatrix}\quad \text{and}\quad \mathscr{L}_-:=\mathscr{L}_G-qA_\phi.$$
Furthermore, the eigenvectors of $\mathscr{L}$ take the block form $[\a~\a~\b]^T$ and $[\c~-\c~ \o]^T,$ where $[\a~ \b]^T$ is an eigenvector of $\mathscr{L}_+$, and $\c$ an eigenvector of $\mathscr{L}_-.$ 
 \end{lem}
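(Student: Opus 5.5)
We will block-diagonalize $\mathscr{L}$ by an explicit change of basis adapted to the involution. With the labeling $(T,\phi(T),S)$, let $n_1=|T|$, $n_2=|S|$, and set
\[
P=\begin{bmatrix} I_{n_1} & I_{n_1} & 0\\ I_{n_1} & -I_{n_1} & 0\\ 0 & 0 & I_{n_2}\end{bmatrix},
\]
which is invertible. The first step is to compute $\mathscr{L}P$ blockwise.

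Applying $\mathscr{L}$ to a vector $[\a~\a~\b]^T$ gives
\[
[(\mathscr{L}_G+qA_\phi)\a+qA_S\b,~ (\mathscr{L}_G+qA_\phi)\a+qA_S\b,~ 2qA_S^T\a+\mathscr{L}_H\b]^T.
\]
This vector again has the symmetric form $[\a'~\a'~\b']^T$ with $[\a'~\b']^T=\mathscr{L}_+[\a~\b]^T$. Applying $\mathscr{L}$ to $[\c~-\c~\o]^T$ gives
\[
[(\mathscr{L}_G-qA_\phi)\c,~ -(\mathscr{L}_G-qA_\phi)\c,~ qA_S^T\c-qA_S^T\c]^T=[\mathscr{L}_-\c~-\mathscr{L}_-\c~\o]^T.
\]
The third block vanishes because both copies of $T$ connect to $S$ through the same matrix $A_S$. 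This uses that $\phi$ is an automorphism and that the loop weights are $\phi$-invariant, which is exactly why the diagonal blocks are equal.

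Next we read off the consequences. Reorder the columns of $P$ so that those spanning the symmetric subspace $\{[\a~\a~\b]^T\}$ come first and those spanning the antisymmetric subspace $\{[\c~-\c~\o]^T\}$ come second. In this basis the computations above show
\[
P^{-1}\mathscr{L}P=\mathscr{L}_+\oplus\mathscr{L}_-.
\]
Hence $\det(xI-\mathscr{L})=P_+(x)P_-(x)$. The eigenvector statement follows directly from the same computations: if $\mathscr{L}_+[\a~\b]^T=\theta[\a~\b]^T$, then $[\a~\a~\b]^T$ is an eigenvector of $\mathscr{L}$ for $\theta$, and similarly $[\c~-\c~\o]^T$ is one whenever $\mathscr{L}_-\c=\theta\c$.

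The main subtlety is that $\mathscr{L}_+$ is not symmetric, because of the factor $2$ in $2qA_S^T$. Its diagonalizability therefore needs a separate argument, and we also need to know that the vectors above give a full eigenbasis of $\mathscr{L}$. We handle this by conjugating with $D=I\oplus\sqrt2 I$. This turns $\mathscr{L}_+$ into the symmetric matrix with off-diagonal blocks $\sqrt2 qA_S$ and $\sqrt2 qA_S^T$, so $\mathscr{L}_+$ is diagonalizable with real spectrum. Eigenbases of $\mathscr{L}_+$ and $\mathscr{L}_-$ then produce $(n_1+n_2)+n_1=|V(X)|$ eigenvectors of $\mathscr{L}$. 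These are linearly independent because the symmetric and antisymmetric subspaces intersect trivially and each family is independent within its subspace. So every eigenspace of $\mathscr{L}$ is spanned by vectors of the two stated forms, which is the intended meaning of the final claim.
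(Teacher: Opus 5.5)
Your proof is correct. Its core computation matches the paper's: the action of $\mathscr{L}$ on $[\a~\a~\b]^T$ and on $[\c~-\c~\o]^T$, together with the rescaling by $I\oplus\sqrt{2}I$ that symmetrizes $\mathscr{L}_+$.

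The difference is in how you obtain $\det(xI-\mathscr{L})=P_+(x)P_-(x)$. The paper first shows $\mathscr{L}_+$ is diagonalizable. It then lifts eigenvectors of $\mathscr{L}_+$ and $\mathscr{L}_-$ to eigenvectors of $\mathscr{L}$, concludes that $P_+$ and $P_-$ each divide the characteristic polynomial of $\mathscr{L}$, and finishes with a degree count. You instead use the invariance of the symmetric and antisymmetric subspaces to write down an explicit similarity between $\mathscr{L}$ and $\mathscr{L}_+\oplus\mathscr{L}_-$ (via your column-reordered change-of-basis matrix). The factorization is then immediate and needs no diagonalizability at all.

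The $\sqrt2$-rescaling is then used only for the eigenbasis claim. Even there it could be dropped: $\mathscr{L}$ is symmetric, hence diagonalizable, and a diagonalizable block-diagonal matrix has diagonalizable blocks.

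Your route is also more robust. Two divisibilities plus $\deg P_++\deg P_-=\deg P$ do not by themselves force the product formula when $P_+$ and $P_-$ share a root. This happens in the paper's own example, where both vanish at $3$. What closes that gap is the joint linear independence of the two lifted eigenvector families, which your similarity builds in automatically. This is essentially the argument the paper itself uses later in Theorem~\ref{T2}.

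Your reading of the eigenvector claim is also the right one: each eigenspace of $\mathscr{L}$ is spanned by vectors of the two stated forms, not that every eigenvector literally has one of those forms.
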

 \begin{proof}
The matrix $\mathscr{L}_+$ is diagonalizable because we may write $\widetilde{\mathscr{L}_+}= K^{-1}\mathscr{L}_+ K,$ where
\begin{equation}
\label{L+}
\widetilde{\mathscr{L}_+}=\begin{bmatrix}
		\mathscr{L}_G+qA_\phi&\sqrt{2}q A_S\\
		\sqrt{2}qA_S^T & \mathscr{L}_H
\end{bmatrix}, \quad K=\begin{bmatrix}
		I& O\\
		O & \sqrt{2}I
	\end{bmatrix}
\end{equation}
Suppose $\lambda$ is an eigenvalue of $\mathscr{L}_+$ with associated eigenvector $[\a~ \b]^T$. Then 
\begin{equation*}
(\mathscr{L}_G+qA_\phi)\a+qA_S\b=\lambda \a\quad \text{and} \quad 2qA_S^T\a+\mathscr{L}_H\b=\lambda \b.
\end{equation*}
Hence
$$\begin{bmatrix}
		\mathscr{L}_G& qA_\phi& qA_S\\
		qA_\phi &\mathscr{L}_G &qA_S\\
		qA_S^T & qA_S^T & \mathscr{L}_H  
\end{bmatrix}\begin{bmatrix}
		\a\\ \a\\ \b
	\end{bmatrix}=\begin{bmatrix}
		(\mathscr{L}_G+qA_\phi)\a+qA_S \b\\
		(\mathscr{L}_G+qA_\phi)\a+qA_S \b\\
		2qA_S^T \a+\mathscr{L}_H\b
	\end{bmatrix}=\lambda \begin{bmatrix}
		\a\\ \a\\ \b  
	\end{bmatrix}.$$  
Hence, $\lambda$ is also an eigenvalue of $\mathscr{L}$ with the same multiplicity as in $\mathscr{L}_+$. So if $P(x)$ is the characteristic polynomial of $\mathscr{L},$ then $P_+(x)$ divides $P(x).$ Similarly, suppose that $\mu$ is an eigenvalue of $\mathscr{L}_{-}$ with eigenvector $c,$ then $[\c~ -\c~ \o]^T$ is an eigenvector of $\mathscr{L}$ associated with the eigenvalue $\mu,$ with the same multiplicity as in $\mathscr{L}_{-}$. Accordingly, the polynomial $P_-(x)$ divides $P(x).$ Since both $P_+(x)$ and $P_-(x)$ are monic polynomials, and their degrees add up to the degree of the polynomial $P(x),$ it follows that $P(x)= P_+(x) P_-(x).$
\end{proof}

\begin{exm}
In Figure \ref{f1}, $\mathscr{L}_G=\begin{bmatrix}
		\alpha+3&q\\
		q & 3
\end{bmatrix}$, $A_\phi=I_2$, $A_S=\begin{bmatrix}
		1\\
		1
\end{bmatrix}$ and $\mathscr{L}_H=\begin{bmatrix}4\end{bmatrix}$. So,
\begin{center}
$\widetilde{\mathscr{L}_+}=\begin{bmatrix}
		\alpha+q+3&q&\sqrt{2}q\\q&q+3&\sqrt{2}q\\
		\sqrt{2}q& \sqrt{2}q&4
\end{bmatrix}\quad\text{and}\quad \widetilde{\mathscr{L}_-}=\begin{bmatrix}
		\alpha+3-q&q\\
		q& 3-q
\end{bmatrix}$.
\end{center}
In particular, if $\alpha=0$, then $P_+(x)P_-(x)$ is the characteristic polynomial of $\mathscr{L}(X)$, where $P_+(x)=(x-3)\big(x^2-(2q+7)x-(4q^2-8q-12)\big)$ and $P_-(x)=(x-3)(x-(3-2q))$. 
\end{exm}

\begin{rem}
\label{rem1}
If $\phi$ has no fixed points, then the matrix $\widetilde{\mathscr{L}_+}$ in (\ref{L+}) satisfies $\widetilde{\mathscr{L}_+}=\mathscr{L}_G+qA_\phi$.
\end{rem}

\section{Transition matrices}\label{sec21}

We now derive a relationship between $U_\mathscr{L}(t)$, $U_{\mathscr{L}_{-}}(t)$, and $U_{\widetilde{\mathscr{L}_+}}(t)$, which will establish a connection between PST in a subgraph and in the original graph in the presence of a non-trivial involution. The following result is analogous to \cite[Lemma 4]{pal7}.
 
 \begin{lem}\label{L2}
Let $X$ be a graph with a non-trivial involution $\phi$, $P$ be the permutation matrix associated with the involution $\phi$, and $G$ be a half graph induced by $\phi$. If $U_\mathscr{L}(t)$ is the transition matrix of the generalized Laplacian matrix of $G,$ then for each $u \in V(G)$,
 \begin{equation}\label{E3}
 U_\mathscr{L}(t)\ob{\e_u-\e_{\phi(u)}}=(I-P)\sum_{\mu_r \in \Lambda_{\e_u}(\mathscr{L}_-)}e^{it\mu_r}F_r\e_u.
 \end{equation}
Moreover, if $F_r'$ denotes the orthogonal projection of $\mathscr{L}_-$ corresponding to the eigenvalue $\mu_r,$ then the matrix $F_r$ is given by
 $$F_r=\frac{1}{2}\begin{bmatrix}
 F_r'&-F_r'&O\\
 -F_r'& F_r'& O\\
 O& O & O
 \end{bmatrix}.$$
 \end{lem}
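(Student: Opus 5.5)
The plan is to use Lemma \ref{L1} to split $\mathbb{R}^{|V(X)|}$ into two $\mathscr{L}$-invariant orthogonal pieces. These are the antisymmetric subspace $W_-=\{[\c~-\c~\o]^T\}$ and the symmetric subspace $W_+=\{[\a~\a~\b]^T\}$. Here $P$ is the involution matrix, block-swapping the first two coordinates and fixing the third. Then $\mathscr{L}$ commutes with $P$, so $W_+=\ker(I-P)$ and $W_-=\ker(I+P)$ are invariant and orthogonal.

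I will first record that every eigenvalue $\mu_r$ of $\mathscr{L}_-$ yields eigenvectors of $\mathscr{L}$ in $W_-$. I will also note that $\mathscr{L}_-=\mathscr{L}_G-qA_\phi$ is symmetric, since $A_\phi$ is symmetric. Fix an eigenvalue $\theta$ of $\mathscr{L}$ and let $F_\theta$ be its projection. Then $F_\theta$ commutes with $P$, hence splits as $F_\theta=F_\theta^++F_\theta^-$, where $F_\theta^\pm$ projects onto $\ker(\mathscr{L}-\theta I)\cap W_\pm$.

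For the $W_-$ part, take an orthonormal basis $\c_1,\dots,\c_m$ of the $\mu_r$-eigenspace of $\mathscr{L}_-$. By Lemma \ref{L1}, the vectors $\frac{1}{\sqrt2}[\c_k~-\c_k~\o]^T$ are orthonormal eigenvectors of $\mathscr{L}$ for $\mu_r$. I will argue they span the whole of $\ker(\mathscr{L}-\mu_r I)\cap W_-$. This holds because any vector in $W_-$ has the form $[\c~-\c~\o]^T$, and applying $\mathscr{L}$ gives first block $(\mathscr{L}_G-qA_\phi)\c$; so $\mathscr{L}$ acts on $W_-$ exactly as $\mathscr{L}_-$. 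Summing the outer products gives
\[
\frac12\begin{bmatrix}F_r'&-F_r'&O\\-F_r'&F_r'&O\\O&O&O\end{bmatrix},
\]
with $F_r'=\sum_k \c_k\c_k^T$. This block matrix is the $F_r$ of the statement, namely the component $F_{\mu_r}^-$ of the spectral idempotent lying in $W_-$. One subtlety is that $\mu_r$ may also be an eigenvalue of $\mathscr{L}_+$. The statement's $F_r$ should therefore be read as this $W_-$ part, not the full projection of $\mathscr{L}$. I will flag this and show the identity still holds, because the $W_+$ component annihilates $W_-$.

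Next, $\e_u-\e_{\phi(u)}=(I-P)\e_u\in W_-$ for $u\in V(G)=T$. So
\[
U_\mathscr{L}(t)(\e_u-\e_{\phi(u)})=\sum_\theta e^{it\theta}F_\theta^-(\e_u-\e_{\phi(u)})=\sum_r e^{it\mu_r}F_r(I-P)\e_u .
\]
A direct block computation gives $F_r(I-P)=2F_r=(I-P)F_r$. Indeed, $(I-P)$ applied to the first column block $\frac12[F_r'~-F_r'~O]^T$ gives $[F_r'~-F_r'~O]^T$, and the first column block of $F_r(I-P)$ is the same. Hence $F_r(I-P)\e_u=(I-P)F_r\e_u$, because both sides equal $[F_r'\e_u~-F_r'\e_u~\o]^T$ when $\e_u$ sits in the first block.

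Finally, the terms with $F_r'\e_u=0$ vanish, so the sum reduces to $\mu_r\in\Lambda_{\e_u}(\mathscr{L}_-)$, which gives \eqref{E3}. The main obstacle is bookkeeping rather than substance. It consists of the notational ambiguity in $F_r$ when eigenvalues are shared with $\mathscr{L}_+$, and of reading ``$U_\mathscr{L}$ of $G$'' as the walk on $X$. I will resolve both explicitly via the $P$-invariant decomposition above.
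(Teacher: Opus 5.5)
Your proof is correct and follows essentially the same route as the paper: expand $U_\mathscr{L}(t)$ spectrally, use Lemma \ref{L1} to see that only the antisymmetric eigenvectors $[\c~-\c~\o]^T$ act on $(I-P)\e_u$, and then move the projection past $I-P$. Your explicit splitting $F_\theta=F_\theta^++F_\theta^-$ sharpens the paper's argument in two ways: it actually derives the block form of $F_r$, which the paper only asserts, and it justifies the paper's unexplained step $(I-P)E_r\e_u=(I-P)F_r\e_u$ when $\mu_r$ is also an eigenvalue of $\mathscr{L}_+$.
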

 \begin{proof}
The spectral decomposition of the transition matrix $U_\mathscr{L}(t)$ gives us
  \begin{equation}\label{E4}
  U_\mathscr{L}(t)\ob{\e_u-\e_{\phi(u)}}=\sum_{\mu_r}e^{it\mu_r}E_r\ob{\e_u-\e_{\phi(u)}},    
  \end{equation}
where the sum runs over all eigenvalues $\mu_r$ of $\mathscr{L}$ and $E_r$ denotes the orthogonal projection associated with eigenvalue $\mu_r$. From Lemma \ref{L1}, the only eigenvectors of $\mathscr{L}$ that contribute to the sum of \eqref{E4} are of the form $[\c~ -\c~ \o]^T,$ where $\c$ is an eigenvector of $\mathscr{L}_-.$ So, we may restrict the sum in (\ref{E4}) over $\Lambda_{\e_u}(\mathscr{L}_-)$. Since $E_r$ is a polynomial in $\mathscr{L},$  $E_r\ob{\e_u-\e_{\phi(u)}}=E_r(I-P)\e_u=(I-P)E_r\e_u=(I-P)F_r\e_u$. This completes the proof.
\end{proof}
We establish that the transition matrix $U_\mathscr{L}(t)$ is similar to a block diagonal matrix, where the transition matrices corresponding to  $\mathscr{L}_{-}$ and $\widetilde{\mathscr{L}_+}$ appear as the diagonal blocks.

\begin{thm}\label{T2}
Let $X$ be a graph with a non-trivial involution $\phi$, and $G$ be a half graph induced by $\phi$. Let $M$ be the orthogonal matrix whose first $|V(G)|$ columns are the vectors in $\mathcal{B}= \cb{\frac{1}{\sqrt{2} }(\e_u-\e_{\phi(u)}):u \in V(G)},$ followed by those in $\mathcal{C}=\cb{\frac{1}{\sqrt{2}}(\e_u+\e_{\phi(u)}):u \in V(G)},$ and then those in $\mathcal{D}=\cb{\e_u:u \in S}$. If $\mathscr{L}$ is the generalized Laplacian matrix of $X$, then 
\[M^T U_\mathscr{L}(t) M= \begin{bmatrix}	U_{\mathscr{L}_-}(t) & O\\	O & U_{\widetilde{\mathscr{L}_+}}(t)
\end{bmatrix},\]
for all $t \in \mathbb{R}$, where $\mathscr{L}_G,~\widetilde{\mathscr{L}_+},~\mathscr{L}_{-},~\mathscr{L}_{S}, ~A_\phi$ and $A_S$ are as defined previously.
\end{thm}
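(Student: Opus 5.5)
The plan is to prove the identity first for $\mathscr{L}$ itself and then pass to the exponential. Concretely, I will show
\[
M^T\mathscr{L}M=\begin{bmatrix}\mathscr{L}_- & O\\ O & \widetilde{\mathscr{L}_+}\end{bmatrix}.
\]
Since $M$ is orthogonal, $M^T e^{it\mathscr{L}}M=e^{itM^T\mathscr{L}M}$; one sees this from the power series, because $(M^T\mathscr{L}M)^k=M^T\mathscr{L}^kM$. The exponential of a block diagonal matrix is the block diagonal matrix of the exponentials, so the theorem follows at once. Everything therefore reduces to a block computation with the labeling already used in Section \ref{sec2}.

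With that labeling, $M$ is explicitly
\[
M=\begin{bmatrix}\tfrac{1}{\sqrt2}I & \tfrac{1}{\sqrt2}I & O\\ -\tfrac{1}{\sqrt2}I & \tfrac{1}{\sqrt2}I & O\\ O & O & I\end{bmatrix}.
\]
I will first check that $M^TM=I$, which is immediate.

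Next I compute $\mathscr{L}M$ column block by column block.

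\textbf{First block column.} It is
\[
\tfrac{1}{\sqrt2}\big[(\mathscr{L}_G-qA_\phi)\ \ -(\mathscr{L}_G-qA_\phi)\ \ qA_S^T-qA_S^T\big]^T=\tfrac{1}{\sqrt2}[\mathscr{L}_-\ \ -\mathscr{L}_-\ \ O]^T.
\]
This is consistent with Lemma \ref{L1}.

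\textbf{Second block column.} It is
\[
\tfrac{1}{\sqrt2}\big[(\mathscr{L}_G+qA_\phi)\ \ (\mathscr{L}_G+qA_\phi)\ \ 2qA_S^T\big]^T.
\]

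\textbf{Third block column.} It is $[qA_S\ \ qA_S\ \ \mathscr{L}_H]^T$.

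Multiplying on the left by $M^T$, the $(1,1)$ block is $\tfrac12(\mathscr{L}_-+\mathscr{L}_-)=\mathscr{L}_-$. The off-diagonal blocks linking the first block with the others vanish. Indeed, the $(1,2)$ block is $\tfrac12[(\mathscr{L}_G+qA_\phi)-(\mathscr{L}_G+qA_\phi)]=O$, and the $(1,3)$ block is $\tfrac1{\sqrt2}(qA_S-qA_S)=O$. By symmetry, the $(2,1)$ and $(3,1)$ blocks vanish as well.

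The remaining $2\times2$ block is
\[
\begin{bmatrix}\mathscr{L}_G+qA_\phi & \sqrt2 qA_S\\ \sqrt2 qA_S^T & \mathscr{L}_H\end{bmatrix}=\widetilde{\mathscr{L}_+}.
\]
Its $(2,2)$ entry is $\tfrac12\cdot2(\mathscr{L}_G+qA_\phi)$. Its $(2,3)$ entry is $\tfrac1{\sqrt2}\cdot2qA_S=\sqrt2qA_S$. Its $(3,2)$ entry is $\tfrac{1}{\sqrt2}2qA_S^T$.

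There is no real obstacle. The only care needed is bookkeeping: the ordering of the columns of $M$ must match the block labeling of $\mathscr{L}$ (i.e. $u$ and $\phi(u)$ occupy corresponding positions in the first two blocks). I also need $A_\phi$ symmetric and $\mathscr{L}_G$ the same in both diagonal blocks, both of which are given in Section \ref{sec2}. The normalization factor $\sqrt2$ is exactly what turns the nonsymmetric $\mathscr{L}_+$ into the symmetric $\widetilde{\mathscr{L}_+}$ of (\ref{L+}).
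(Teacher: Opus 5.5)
Your proof is correct, and it takes a more direct route than the paper. The paper never conjugates $\mathscr{L}$ itself. It works at the level of $U_\mathscr{L}(t)$ and treats the two invariant subspaces separately.
\begin{itemize}
\item For $W=\operatorname{span}(\mathcal{B})$ it uses Lemma~\ref{L2}, i.e.\ the spectral decomposition of $U_\mathscr{L}(t)(\e_u-\e_{\phi(u)})$ through the projections of $\mathscr{L}_-$, which rests on the eigenvector description in Lemma~\ref{L1}. From this it computes $\tfrac12(\e_v-\e_{\phi(v)})^TU_\mathscr{L}(t)(\e_u-\e_{\phi(u)})=\check{\e}_v^TU_{\mathscr{L}_-}(t)\check{\e}_u$, together with the invariance of $W$, which comes from the permutation matrix $P$ of $\phi$ commuting with $U_\mathscr{L}(t)$.
\item For the orthogonal complement it uses the intertwining relation $\mathscr{L}N=N\widetilde{\mathscr{L}_+}$, where $N$ has the columns in $\mathcal{C}\cup\mathcal{D}$, hence $U_\mathscr{L}(t)N=NU_{\widetilde{\mathscr{L}_+}}(t)$.
\end{itemize}
Your lower-right block computation is exactly that intertwining relation. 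Your first block column is the analogous relation $\mathscr{L}N'=N'\mathscr{L}_-$ with $N'=\tfrac{1}{\sqrt2}[\,I~~{-I}~~O\,]^T$. So you replace the spectral-projection argument for the $\mathcal{B}$ block with the same elementary block multiplication the paper uses for the other block.

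This buys you self-containment, since neither Lemma~\ref{L1} nor Lemma~\ref{L2} is needed. It also buys generality: the conjugation gives $M^Tf(\mathscr{L})M=\operatorname{diag}\bigl(f(\mathscr{L}_-),f(\widetilde{\mathscr{L}_+})\bigr)$ for every polynomial or power series $f$. Consequently the factorization of the characteristic polynomial in Lemma~\ref{L1} and the block form of the projections $F_r$ in Lemma~\ref{L2} become corollaries rather than prerequisites. The paper's route, in exchange, spells out how the eigenvectors and eigenvalue supports of $\mathscr{L}$ arise from those of $\mathscr{L}_-$ and $\mathscr{L}_+$, which is what the later strong-cospectrality statements draw on. That information can equally be read off from your identity.
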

\begin{proof}
Let $\e_u$ and $\check{\e}_u$ be the characteristic vectors of vertex $u\in V(G)$ in $X$ and $G,$ respectively. Since $U_\mathscr{L}(t)$ is a polynomial in $\mathscr{L},$ the permutation matrix $P$ associated with the involution $\phi$ commutes with $U_\mathscr{L}(t).$ Since $\e_v-\e_{\phi(v)}=\ob{I-P}\e_v$, we get
\begin{center}
$\frac{1}{2}\ob{\e_v-\e_{\phi(v)}}^T U_\mathscr{L}(t)(\e_u-\e_{\phi(u)}) 
 = \frac{1}{2}\e_v^T(I-P)^TU_\mathscr{L}(t)\ob{\e_u-\e_{\phi(u)}}= \e_v^TU_\mathscr{L}(t)(\e_u-\e_{\phi(u)})$
\end{center}
for every $u,v\in V\ob{G}$. It follows from Lemma \ref{L2} that
\begin{eqnarray*}
  \e_v^TU_\mathscr{L}(t)\ob{\e_u-\e_{\phi(u)}} &=& \e_v^T(I-P)\sum_{\mu_r \in \Lambda_{\check{\e}_u}(\mathscr{L}_-)}e^{it\mu_r}~F_r\e_u\\
  &=& \frac{1}{2}\sum_{\mu_r \in \Lambda_{\check{\e}_u}(\mathscr{L}_-)}e^{it\mu_r}~ \begin{bmatrix}
      \check{\e}_v^T& -\check{\e}_v^T& \o
  \end{bmatrix} \begin{bmatrix}
   F_r'\\
   -F_r'\\
   O
  \end{bmatrix}\check{\e}_u\\
  &=& \check{\e}_v^T U_{\mathscr{L}_-}(t) \check{\e}_u.
\end{eqnarray*}
Since $W = \text{span}(\mathcal{B})$ is an invariant subspace of $U_{\mathscr{L}}(t)$, the matrix representation of the restriction operator $U_{\mathscr{L}}(t)|_W$ relative to the basis $\mathcal{B}$ is the transition matrix $U_{\mathscr{L}_-}(t)$. Let $N$ be the matrix whose columns are the vectors in $\mathcal{C}$ and $\mathcal{D}$, respectively. Then
\[
N = \begin{bmatrix}
\frac{1}{\sqrt{2}} I_{|V(G)|} & O\\
\frac{1}{\sqrt{2}} I_{|V(G)|} & O \\
O & I_{|S|}
\end{bmatrix}.
\]
It follows that $\mathscr{L}N=N\widetilde{\mathscr{L}}_+$, and therefore $U_\mathscr{L}(t)N=NU_{\widetilde{\mathscr{L}_+}}(t).$ Since the columns of $N$ span the orthogonal complement of $W$, we have the desired conclusion.
\end{proof}

Since the matrix $M$ is orthogonal (i.e., $MM^T=I$) and satisfies $M\e_u=\frac{1}{\sqrt{2}}(\e_u-\e_{\phi(u)})$ for all $u\in V(G)$, the following observations are immediate from Theorem \ref{T2}.
\begin{cor}\label{C1}
With the assumption in Theorem \ref{T2}, the following hold.
\begin{enumerate}
\item Strong cospectrality (resp., PST, PGST) occurs between $\x$ and $\y$ in $X$ relative to 
\begin{equation}
\label{L}
\begin{bmatrix}
    \mathscr{L}_- & O\\ O & \widetilde{\mathscr{L}}_+
\end{bmatrix}
\end{equation}
 if and only if strong cospectrality (resp., PST, PGST) occurs between $M\x$ and $M\y$ in $X$ relative to $\mathscr{L}$. In particular, the following hold.
 \begin{enumerate}
\item Strong cospectrality (resp., PST, PGST) occurs between vertices $u$ and $v$ in $G$ relative to $\mathscr{L}_- $ if and only if strong cospectrality (resp., PST, PGST) occurs between pair states $\frac{1}{\sqrt{2}}(\e_u-\e_{\phi(u)})$ and $\frac{1}{\sqrt{2}}(\e_v-\e_{\phi(v)})$ in $X$ relative to $\mathscr{L}$.
\item Strong cospectrality (resp., PST, PGST) occurs between vertices $u$ and $v$ relative to $\widetilde{\mathscr{L}}_+$ if and only if strong cospectrality (resp., PST, PGST) occurs between plus states $\frac{1}{\sqrt{2}}(\e_u+\e_{\phi(u)})$ and $\frac{1}{\sqrt{2}}(\e_v+\e_{\phi(v)})$ in $X$ relative to $\mathscr{L}$.
 \end{enumerate}
\item The pure state $\x$ is periodic (resp., sedentary) in $X$ relative to the matrix in (\ref{L}) 
if and only if the pure state $M\x$ is periodic (resp., sedentary) in $X$ relative to $\mathscr{L}$. In particular, the following hold.
\begin{enumerate}
\item Vertex $u$ is periodic (resp., sedentary) in $G$ relative to $\mathscr{L}_- $ if and only if the pair state $\frac{1}{\sqrt{2}}(\e_u-\e_{\phi(u)})$ is periodic (resp., sedentary) in $X$ relative to $\mathscr{L}$.
\item Vertex $u$ is periodic (resp., sedentary) in $G$ relative to $\widetilde{\mathscr{L}}_+$ if and only if the plus state $\frac{1}{\sqrt{2}}(\e_u+\e_{\phi(u)})$ is periodic (resp., sedentary) in $X$ relative to $\mathscr{L}$.
\end{enumerate}
\end{enumerate}
\end{cor}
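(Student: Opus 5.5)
The whole of Corollary \ref{C1} should follow from the conjugation identity of Theorem \ref{T2}. Write $D(t)$ for the block diagonal matrix $\mathrm{diag}(U_{\mathscr{L}_-}(t),U_{\widetilde{\mathscr{L}_+}}(t))$; this is exactly the transition matrix of the matrix in (\ref{L}). Theorem \ref{T2} gives $U_\mathscr{L}(t)=M D(t) M^T$, hence $U_\mathscr{L}(t)M=MD(t)$ for all $t$. I then treat each property separately, using only that $M$ is real orthogonal.

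\textbf{PST.} If $D(\tau)\x=\gamma\y$, then $U_\mathscr{L}(\tau)M\x=MD(\tau)\x=\gamma M\y$. Conversely, multiply by $M^T$. Since $M$ preserves norms and linear independence, $M\x$ and $M\y$ are again pure states, and they are linearly independent exactly when $\x$ and $\y$ are.

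\textbf{PGST.} Apply the same identity along a sequence $\tau_k$ and use continuity of multiplication by $M$ or $M^T$. If $D(\tau_k)\x\to\gamma\y$, then $U_\mathscr{L}(\tau_k)M\x\to\gamma M\y$, and conversely.

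\textbf{Periodicity.} This is PST from a state to itself, so it follows from the PST case.

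\textbf{Sedentariness.} I use $(M\x)^TU_\mathscr{L}(t)M\x=\x^TM^TMD(t)\x=\x^TD(t)\x$. The two infima over $t>0$ therefore coincide.

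\textbf{Strong cospectrality.} This is the one step needing care, since it concerns spectral idempotents rather than transition matrices. Let $E_j$ be the projections of $\mathscr{L}$ and $F_j$ those of the matrix in (\ref{L}). By Theorem \ref{T2} (or directly from $\mathscr{L}N=N\widetilde{\mathscr{L}}_+$ together with the analogous relation for $\mathscr{L}_-$ on $\mathcal{B}$), $M^T\mathscr{L}M$ equals the matrix in (\ref{L}). Conjugation preserves eigenvalues and carries projections to projections, so $E_j=MF_jM^T$ for each eigenvalue $\theta_j$. Then $E_jM\x=MF_j\x$. Consequently $\Lambda_{M\x}(\mathscr{L})=\Lambda_\x$ and $E_jM\x=\pm E_jM\y$ if and only if $F_j\x=\pm F_j\y$.

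One could instead avoid the matrix identity by noting that $U$ determines the projections: the functions $e^{it\theta_j}$ are linearly independent, so the projections are determined by the transition matrix.

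\textbf{The special cases.} Here I take $\x$ and $\y$ to be standard basis vectors of $\mathbb{R}^{|V(X)|}$ indexed by the first block, that is $\check{\e}_u$ padded with zeros. Then $M\x=\frac{1}{\sqrt2}(\e_u-\e_{\phi(u)})$, and the block structure shows that PST, PGST, strong cospectrality, periodicity and sedentariness relative to (\ref{L}) between such vectors reduce to the same notions relative to $\mathscr{L}_-$ alone. The reason is that the second block acts trivially on the zero components, and the eigenvalue support only involves projections of $\mathscr{L}_-$.

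For the plus states I take basis vectors in the second block. These correspond to the columns $\frac{1}{\sqrt2}(\e_u+\e_{\phi(u)})$ of $M$, and the argument is identical with $\widetilde{\mathscr{L}}_+$ in place of $\mathscr{L}_-$.

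\textbf{Main obstacle.} I expect the only point needing attention to be the strong cospectrality step, namely checking that a coincident eigenvalue of $\mathscr{L}_-$ and $\widetilde{\mathscr{L}}_+$ causes no issue. It does not: the projection of (\ref{L}) for such an eigenvalue is the direct sum of the two blocks' projections, and a vector supported on one block only sees that block's projection.
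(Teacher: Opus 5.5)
Your proposal is correct and follows the paper's argument: the paper simply declares the corollary immediate from Theorem~\ref{T2} together with the orthogonality of $M$ and the identity $M\e_u=\frac{1}{\sqrt{2}}(\e_u-\e_{\phi(u)})$, and you have written out the details for each property. Your treatment of strong cospectrality, via $M^T\mathscr{L}M=\mathrm{diag}(\mathscr{L}_-,\widetilde{\mathscr{L}}_+)$ or via uniqueness of the spectral idempotents of the transition matrix, supplies the one step the paper leaves implicit.
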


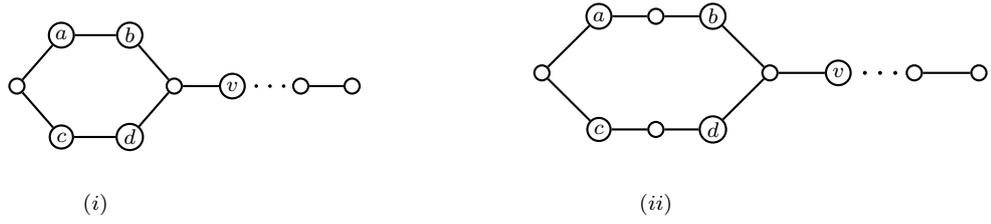
\begin{figure}
\centering

\begin{minipage}[t]{0.47\textwidth}
\centering

\begin{tikzpicture}[scale=0.45]
\node[circle,thick,draw,inner sep=2pt] (A) at (-1.3,0) {};
\node[circle,thick,draw,inner sep=1.3pt] (B) at (0,1.5) {\scriptsize$a$};
\node[circle,thick,draw,inner sep=1pt] (C) at (2,1.5) {\scriptsize$b$};
\node[circle,thick,draw,inner sep=1.3pt] (D) at (0,-1.5) {\scriptsize$c$};
\node[circle,thick,draw,inner sep=1pt] (E) at (2,-1.5) {\scriptsize$d$};
\node[circle,thick,draw,inner sep=2pt] (F) at (3.3,0) {};
\node[circle,thick,draw,inner sep=1.3pt] (G) at (5,0) {\scriptsize$v$};
\node[circle,thick,draw,inner sep=2pt] (I) at (7,0) {};
\node[circle,thick,draw,inner sep=2pt] (J) at (8.5,0) {};
\node at (1,-3.5) {\scriptsize$(i)$};

\draw[thick] (A) -- (B);
\draw[thick] (B) -- (C);
\draw[thick] (A) -- (D);
\draw[thick] (D) -- (E);
\draw[thick] (F) -- (E);
\draw[thick] (C) -- (F);
\draw[thick] (F) -- (G);
\draw[thick] (I) -- (J);
\foreach \x in {5.7,6.1,6.5} {\fill (\x,0) circle (1.5pt); }

\end{tikzpicture}
\end{minipage}
\begin{minipage}[t]{0.46\textwidth}
\centering

\begin{tikzpicture}[scale=0.5]
\node[circle,thick,draw,inner sep=2pt] (A) at (-3,0) {};
\node[circle,thick,draw,inner sep=2pt] (B) at (0,1.5) {};
\node[circle,thick,draw,inner sep=1pt] (C) at (1.5,1.5) {\scriptsize$b$};
\node[circle,thick,draw,inner sep=2pt] (D) at (0,-1.5) {};
\node[circle,thick,draw,inner sep=1pt] (E) at (1.5,-1.5) {\scriptsize$d$};
\node[circle,thick,draw,inner sep=2pt] (F) at (3,0) {};
\node[circle,thick,draw,inner sep=1.3pt] (G) at (4.8,0) {\scriptsize$v$};
\node[circle,thick,draw,inner sep=2pt] (I) at (6.8,0) {};
\node[circle,thick,draw,inner sep=2pt] (J) at (8.5,0) {};
\node[circle,thick,draw,inner sep=1.3pt] (K) at (-1.5,1.5) {\scriptsize$a$};
\node[circle,thick,draw,inner sep=1.3pt] (L) at (-1.5,-1.5){\scriptsize$c$};
\node at (0,-3.5) {\scriptsize$(ii)$};

\draw[thick] (A) -- (K);
\draw[thick] (B) -- (C);
\draw[thick] (A) -- (L);
\draw[thick] (K) -- (B);
\draw[thick] (D) -- (L);
\draw[thick] (D) -- (E);
\draw[thick] (F) -- (E);
\draw[thick] (C) -- (F);
\draw[thick] (F) -- (G);
\draw[thick] (I) -- (J);
\foreach \x in {5.5,5.9,6.3} {\fill (\x,0) circle (1.5pt); }

\end{tikzpicture}
\end{minipage}
\vspace{-0.2in}

\caption{Cycles with tails with pair state transfer between $\frac{1}{\sqrt{2}}\left( \mathbf{e}_a - \mathbf{e}_c \right)$ and $\frac{1}{\sqrt{2}}\left( \mathbf{e}_b - \mathbf{e}_d \right)$.}
\label{f2}
\end{figure}

\section{Infinite families}\label{sec:inf}

In this section, we provide infinite families of (relatively sparse) graphs that admit pair PST relative to the generalized Laplacian matrix.

\begin{thm}
Let $q\in\mathbb{R}\backslash\{0\}$.  There are infinitely many simple unicyclic graphs $X$ with maximum degree three that admit pair PST relative to $\mathscr{L}=\Delta+qA(X)$ at $\tau\in\{\frac{\pi}{2q},\frac{\pi}{\sqrt{2}q}\}$.
\end{thm}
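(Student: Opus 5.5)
The plan is to use the graphs of Figure \ref{f2} together with Corollary \ref{C1}(1)(a): pair PST in $X$ relative to $\mathscr{L}$ is equivalent to vertex PST in the half graph $G$ relative to $\mathscr{L}_-=\mathscr{L}_G-qA_\phi$. The constructions and the reduction to Corollary \ref{C1}(1)(a) are settled; the only checks left are the two small matrix computations below.

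\emph{Family (i).} Take a $6$-cycle $w,a,b,z,d,c$ (with $w$ adjacent to $a,c$ and $z$ adjacent to $b,d$) and attach to $z$ a path (tail) of arbitrary length $m\ge 1$. This graph is simple and unicyclic, and its maximum degree is $3$, attained at $z$. The map $\phi=(a~c)(b~d)$ fixing $w$, $z$ and the tail is a nontrivial involution. With $T=\{a,b\}$, the half graph $G$ is the edge $ab$. There are no edges between $T$ and $\phi(T)=\{c,d\}$, so $A_\phi=O$. Both $a$ and $b$ have degree $2$ in $X$, so
\[
\mathscr{L}_-=\begin{bmatrix}2&q\\ q&2\end{bmatrix}=2I+qA(K_2).
\]
Hence $U_{\mathscr{L}_-}(t)=e^{2it}\exp(itqA(K_2))$. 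Since $\exp(itqA(K_2))=\cos(qt)I+i\sin(qt)A(K_2)$, this gives PST between $a$ and $b$ at $t=\frac{\pi}{2q}$. By Corollary \ref{C1}(1)(a), pair PST then occurs between $\frac{1}{\sqrt2}(\e_a-\e_c)$ and $\frac{1}{\sqrt2}(\e_b-\e_d)$ in $X$.

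\emph{Family (ii).} Do the same with an $8$-cycle $w,a,x,b,z,d,y,c$, with the tail again at $z$ and $\phi=(a~c)(x~y)(b~d)$. Here $T=\{a,x,b\}$, the half graph is $P_3$ with $a,b$ as its ends, and $A_\phi=O$ again. Every vertex of $T$ has degree $2$, so $\mathscr{L}_-=2I+qA(P_3)$. The path $P_3$ has adjacency PST between its end vertices at time $\frac{\pi}{\sqrt2}$, because its eigenvalues are $0,\pm\sqrt2$. Scaling by $q$ and using the fact that $2I$ commutes with everything, we get vertex PST between $a$ and $b$ relative to $\mathscr{L}_-$ at $\frac{\pi}{\sqrt2 q}$. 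Corollary \ref{C1}(1)(a) then yields pair PST in $X$.

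Since the tail length $m$ is arbitrary, each construction gives infinitely many pairwise nonisomorphic graphs. There is a minor point when $q<0$, because the stated time is then negative. Because $\mathscr{L}$ is real, $U_\mathscr{L}(-t)=\overline{U_\mathscr{L}(t)}$, so PST at $-\tau$ between real states is equivalent to PST at $\tau$. We may therefore read $\tau$ as $\frac{\pi}{2|q|}$ or $\frac{\pi}{\sqrt2|q|}$.
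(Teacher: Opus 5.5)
Your proposal is correct and matches the paper's proof: the same $C_6$/$C_8$-with-a-tail graphs of Figure~\ref{f2}, the same involution with $A_\phi=O$ giving $\mathscr{L}_-=2I+qA(P_2)$ or $2I+qA(P_3)$, and the same appeal to Corollary~\ref{C1}(1)(a). You also fill in details the paper leaves implicit: the degree-three vertex, the explicit involution $(a~c)(x~y)(b~d)$ on the $8$-cycle, and the handling of negative $\tau$ when $q<0$.
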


\begin{proof}
Let $X$ be the graph in Figure \ref{f2}(i) obtained by attaching a path of any length to a vertex of the cycle $C_6$. The generalized Laplacian matrix of the half graph $G$ of $X$ induced by the involution $\phi=(a~c)(b~d)$ is given by $\mathscr{L}_G=2I+q A\ob{P_2}.$ Since $A_{\phi}=\o$, we get $\mathscr{L}_{-}=\mathscr{L}_G$. Since PST occurs between $\e_a$ and $\e_b$ in $P_2$ at $\frac{\pi}{2q}$ relative to $\mathscr{L}_{-}$, Corollary \ref{C1}(1a) yields PST between $\frac{1}{\sqrt{2}}\ob{\e_a-\e_c}$ and $\frac{1}{\sqrt{2}}\ob{\e_b-\e_d}$ relative to $\mathscr{L}$ at $\frac{\pi}{2q}$. Similarly, for the graph obtained by replacing $C_6$ with $C_8$ as in Figure \ref{f2}$(ii)$, pair PST occurs between $\frac{1}{\sqrt{2}}\left( \mathbf{e}_a - \mathbf{e}_c \right)$ and $\frac{1}{\sqrt{2}}\left( \mathbf{e}_b - \mathbf{e}_d \right)$ in $X$  relative to $\mathscr{L}$ at time $\frac{\pi}{q\sqrt{2}}$. 
\end{proof}

In Figure \ref{f2}, if all vertices of $C_6$ (resp., $C_8$) are connected to vertex $v$, then it follows from Corollary \ref{C1}(1a) that the resulting graph exhibits pair PST relative to $\mathscr{L}$ between $\frac{1}{\sqrt{2}}\left( \mathbf{e}_a - \mathbf{e}_c \right)$ and $\frac{1}{\sqrt{2}}\left( \mathbf{e}_b - \mathbf{e}_d \right)$. This fact is also observed in \cite[Theorem 1]{pal9} for both Laplacian and signless Laplacian matrices.

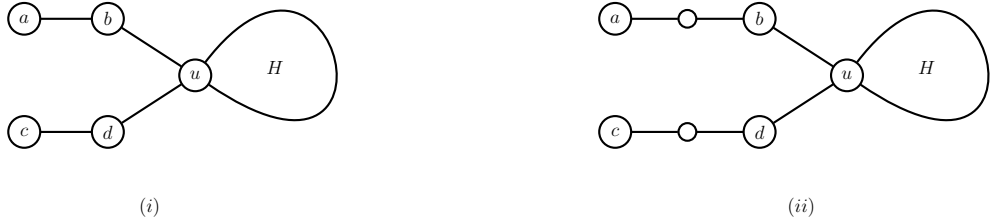
\begin{figure}
\begin{multicols}{2}
\begin{center}
\begin{tikzpicture}[scale=.5,auto=left]
                       \tikzstyle{every node}=[circle, thick, fill=white, scale=0.6]
                       
		        \node[draw,minimum size=0.7cm, inner sep=0 pt] (1) at (1.2,0) {$u$};		        
		        \node[draw,minimum size=0.7cm, inner sep=0 pt] (2) at (-3.3, 1.5) {$a$};
		        \node[draw,minimum size=0.7cm, inner sep=0 pt] (3) at (-1.1, 1.5) {$b$};
		        \node[draw,minimum size=0.7cm, inner sep=0 pt] (4) at (-3.3, -1.5) {$c$};
		        \node[draw,minimum size=0.7cm, inner sep=0 pt] (5) at (-1.1, -1.5) {$d$};	
                \node at (0,-3.5) {\normalsize$(i)$};
				\node at (3.3,0.2) {$H$};
					
				\draw [thick] (1)--(3)--(2);
				\draw [thick] (5)--(4);
                \draw [thick] (1)--(5);
				\draw[thick] (1)..controls (5,5) and (7,-4)..(1);

				\end{tikzpicture}
                
\begin{tikzpicture}[scale=.5,auto=left]
                       \tikzstyle{every node}=[circle, thick, fill=white, scale=0.6]
                       
		        \node[draw,minimum size=0.7cm, inner sep=0 pt] (1) at (1.2,0) {$u$};		   
                \node[draw,minimum size=0.7cm, inner sep=0 pt](6) at (-4.9, 1.5) {$a$};
		        \node[draw] (2) at (-3, 1.5) {};
		        \node[draw,minimum size=0.7cm, inner sep=0 pt] (3) at (-1.1, 1.5) {$b$};
		        \node[draw] (4) at (-3, -1.5) {};
		        \node[draw,minimum size=0.7cm, inner sep=0 pt] (5) at (-1.1, -1.5) {$d$};
                \node[draw,minimum size=0.7cm, inner sep=0 pt] (7) at (-4.9, -1.5) {$c$};
                \node at (0,-3.5) {\normalsize$(ii)$};
				\node at (3.3,0.2) {$H$};

				\draw [thick] (1)--(3)--(2)--(6);
				\draw [thick] (5)--(4)--(7);
                \draw [thick] (1)--(5);
				\draw[thick] (1)..controls (5,5) and (7,-4)..(1);

				\end{tikzpicture}
				
\end{center}
\end{multicols}
\vspace{-0.35in}

\caption{\label{fii} Graphs $X$ (left) and $Y$ (right) with involution $\phi=(a~c)(b~d)$ that fixes $H$}
\end{figure}

We now prove the main result in this section.

\begin{thm}
\label{planar}
Let $q\in\mathbb{R}\backslash\{0\}$. Almost all simple unweighted planar graphs (resp., almost all simple unweighted trees) can be assigned loops of weight one to exactly two vertices such that the resulting graph admits pair PST relative to $\mathscr{L}=\Delta+qA(X)$ at $\tau\in\{\frac{\pi}{2q},\frac{\pi}{\sqrt{2}q}\}$.
\end{thm}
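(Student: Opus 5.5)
The plan is to reduce the statement to Corollary \ref{C1}(1a) by exhibiting, in almost every planar graph or tree, a small pendant gadget of the form shown in Figure \ref{fii}. Concretely, I will show that almost every such graph contains a vertex $u$ carrying two pendant paths of length two, $u-b-a$ and $u-d-c$ (graph $X$ in Figure \ref{fii}(i)). The rest of the graph, $H$, is attached only through $u$.

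\textbf{Step 1: the gadget gives PST.} In this configuration the map $\phi=(a~c)(b~d)$ is a non-trivial involution fixing $H$ and $u$ pointwise. I put loops of weight one on $a$ and $c$, which respects $\phi$ since $\omega(a)=\omega(c)$. Then $a$ has degree $2$ (edge to $b$ plus the loop) and $b$ has degree $2$ (edges to $a$ and $u$). Hence the half graph $G$ on $T=\{a,b\}$ has $\mathscr{L}_G=2I+qA(P_2)$. There are no edges between $\{a,b\}$ and $\{c,d\}$, so $A_\phi=O$ and $\mathscr{L}_-=\mathscr{L}_G$. Since $2I$ only contributes a global phase, $U_{\mathscr{L}_-}(t)=e^{2it}\exp(itqA(P_2))$. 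This gives vertex PST between $a$ and $b$ at $\tau=\frac{\pi}{2q}$ (or $\frac{\pi}{2|q|}$ if one insists on positive time; $q<0$ only changes the sign of the time). Corollary \ref{C1}(1a) then yields PST between $\frac{1}{\sqrt2}(\e_a-\e_c)$ and $\frac{1}{\sqrt2}(\e_b-\e_d)$ relative to $\mathscr{L}$.

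\textbf{Alternative gadget.} The same argument applies if instead $u$ carries two pendant paths of length three (Figure \ref{fii}(ii)), with loops at the two ends. In that case $\mathscr{L}_-=2I+qA(P_3)$, and $P_3$ has end-vertex PST at $\frac{\pi}{\sqrt2 q}$. This accounts for the second time in the statement. Only one of the two gadgets is needed, but having both is harmless.

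\textbf{Step 2: the gadget appears almost surely.} This is the main point and the main obstacle, since it needs random-graph input rather than spectral arguments.
\begin{itemize}
\item For trees, I would invoke Schwenk's theorem: for any fixed rooted tree (limb) $B$, almost all trees contain a vertex at which a copy of $B$ hangs as a branch. Take $B$ to be the spider with two legs of length two, rooted at its center, attached at $u$. The theorem was originally stated for adjacency cospectrality, but the underlying counting result concerns the existence of limbs.
\item For planar graphs, I would use the appearance theorem of McDiarmid, Steger and Welsh: for any fixed connected planar graph $F$ with a root, a random labelled planar graph a.a.s.\ contains linearly many pendant appearances of $F$, that is, copies attached to the rest of the graph by a single edge. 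Take $F$ to be the same spider, rooted at its center $u$. The graph then contains $u$ together with the two legs, where $u$ has one extra edge into the rest, so the legs have no other adjacencies. This is exactly the configuration of Step 1.
\end{itemize}
The care needed here is to check that "pendant appearance" guarantees that the internal vertices $a,b,c,d$ have no further neighbours. Otherwise the degree computation in Step 1 would fail.

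\textbf{Step 3: conclusion.} Adding the two loops of weight one at $a$ and $c$ does not affect planarity or the counting. Combining Steps 1 and 2 therefore gives the statement for almost all planar graphs and almost all trees.
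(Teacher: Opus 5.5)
Your proposal is correct and follows essentially the paper's route: the paper likewise takes almost every planar graph or tree to be $H$ attached to the middle vertex of $P_5$ (or $P_7$), puts unit loops at $a$ and $c$ so that $\mathscr{L}_-=2I+qA(P_2)$ (or $2I+qA(P_3)$), and concludes via Corollary~\ref{C1}(1a). The only difference is that the paper imports the almost-sure structural step by adapting Theorems 8--9 of \cite{sarojini3}, whereas you name the underlying pendant-appearance and limb theorems directly and check that $a,b,c,d$ can have no outside neighbours.
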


\begin{proof}
Adapting the proof of Theorem 8 in \cite{sarojini3}, we get that almost all connected simple unweighted planar graphs have the same form as the graph $X$ in Figure \ref{fii}(i) (resp., $Y$ in Figure \ref{fii}(ii)) obtained by attaching a planar graph $H$ to the middle vertex of $P_5$ (resp., $P_7$). Now, let $\tilde{Z}$ be the resulting graph after assigning loops at vertices $a$ and $c$ of weight one. The generalized Laplacian matrix of the half graph $G$ of $\tilde{Z}$ induced by the involution $\phi=(a~c)(b~d)$ that fixes $H$ is given by $\mathscr{L}_G=2I+q A\ob{Z}$, where $Z=P_2$ if $X$ is the graph in Figure \ref{fii}(i), and $Z=P_3$ otherwise. The same argument used previously implies that $X$ has PST between $\frac{1}{\sqrt{2}}\ob{\e_a-\e_c}$ and $\frac{1}{\sqrt{2}}\ob{\e_b-\e_d}$ at $\tau=\frac{\pi}{2q}$ if $Z=P_2$, and $\tau=\frac{\pi}{\sqrt{2}q}$ otherwise. This proves the above result for planar graphs.

For trees, we adapt the proof of Theorem 9 in \cite{sarojini3} to conclude that almost all simple unweighted trees have the same form as the graph $X$ in Figure \ref{fii}(i) (resp., $Y$ in Figure \ref{fii}(ii)) obtained by attaching a tree $H$ to the middle vertex of $P_5$ (resp., $P_7$). Applying the same argument used for planar graphs yields the desired result.
\end{proof}

We are unaware of other results that involve pair PST in simple unweighted planar graphs and trees relative to the generalized Laplacian matrix. Nevertheless, Theorem \ref{planar} reveals that pair PST can be produced relative to the generalized Laplacian by adding loops to carefully chosen vertices in the graph. This is in contrast to the adjacency case, where almost all simple unweighted planar graphs (resp., trees) admit pair PST at $\tau\in\{\frac{\pi}{2},\frac{\pi}{\sqrt{2}}\}$ without the need for loops, see Theorems 8-9 and Remark 7 in \cite{sarojini3}.

Next, taking $H=P_n$ in Figure \ref{fii} with $u$ as an end vertex, we obtain:

\begin{cor}
Let $q\in\mathbb{R}\backslash\{0\}$. There are infinitely many simple unweighted trees with maximum degree three that can be assigned loops of weight one to exactly two vertices such that the resulting graph admits pair PST relative to $\mathscr{L}=\Delta+qA(X)$ at $\tau\in\{\frac{\pi}{2q},\frac{\pi}{\sqrt{2}q}\}$.
\end{cor}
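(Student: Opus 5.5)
The plan is to specialize the construction behind Theorem \ref{planar}, taking $H=P_n$ attached at one of its end vertices $u$. Let $X_n$ be the tree obtained by identifying an end vertex of $P_n$ with the middle vertex $u$ of $P_5$, so that $a,b,u,d,c$ is the $P_5$ as in Figure \ref{fii}(i). Let $Y_n$ be the analogous tree built from $P_7$, as in Figure \ref{fii}(ii). These are trees, and as $n$ varies they give infinitely many pairwise non-isomorphic graphs.

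\emph{Maximum degree.} Every vertex of $P_5$ or $P_7$ other than $u$ has degree at most two. Every vertex of $P_n$ other than $u$ also has degree at most two. The vertex $u$ has degree $2$ from the short path plus $1$ from $P_n$ (for $n\ge 2$), so its degree is exactly three. Hence the maximum degree is three.

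\emph{Adding the loops.} We put loops of weight one at $a$ and $c$. The map $\phi=(a~c)(b~d)$ fixes $u$ and all of $H=P_n$, and $\omega(a)=\omega(c)$, so $\phi$ is a non-trivial involution of the looped graph. Take the half graph on $T=\{a,b\}$ in the $P_5$ case, or on $T=\{a,\ast,b\}$ in the $P_7$ case.

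\emph{Computing $\mathscr{L}_-$.} First check the diagonal of $\Delta_G$. The vertex $a$ has degree $1$ plus the loop weight $1$, giving $2$. The vertex $b$ (and the middle vertex in the $P_7$ case) has degree $2$. So $\Delta_G=2I$, which is exactly why the loops are needed. No vertex of $T$ is adjacent to its own image, so $A_\phi=O$. Therefore $\mathscr{L}_-=\mathscr{L}_G=2I+qA(Z)$ with $Z=P_2$ or $Z=P_3$.

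\emph{Vertex PST in $Z$.} The term $2I$ only contributes a global phase. Vertex PST between the end vertices of $P_2$ under $qA$ occurs at $\pi/(2q)$. For $P_3$ it occurs at $\pi/(\sqrt2 q)$, since the eigenvalues are $0$ and $\pm\sqrt2$. Note that for negative $q$ the times should be read with $|q|$.

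\emph{Transfer to $X$.} By Corollary \ref{C1}(1a), this vertex PST in $Z$ gives pair PST between $\frac{1}{\sqrt2}(\e_a-\e_c)$ and $\frac{1}{\sqrt2}(\e_b-\e_d)$ in the looped graph, relative to $\mathscr{L}$.

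The only point needing care is the degree bookkeeping. The loop contributes to $\Delta$ so that $\Delta_G$ is scalar, and $P_n$ must be attached at an end vertex so that the degree of $u$ stays three. Everything else follows directly from Corollary \ref{C1}, so I expect no real obstacle.
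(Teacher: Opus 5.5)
Your proposal is correct and follows the paper's own route. The corollary is obtained by specializing the construction of Theorem \ref{planar} to $H=P_n$ attached at an end vertex $u$, which gives $\mathscr{L}_-=2I+qA(Z)$ with $Z\in\{P_2,P_3\}$, and then applying Corollary \ref{C1}(1a); your checks that $\deg u=3$ for $n\ge 2$, $\Delta_G=2I$ and $A_\phi=O$ supply bookkeeping the paper leaves implicit. One small slip, inherited from the paper: in the $P_7$ case the involution must also swap the two unlabeled vertices, i.e.\ it is $(a~c)(x~y)(b~d)$ rather than $(a~c)(b~d)$, which your half graph $T=\{a,\ast,b\}$ already tacitly assumes.
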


Next, we construct graphs with loops equipped with a fixed-point-free involution admitting plus PST. In what follows, we let $\max\operatorname{deg}(G)$ and $\operatorname{deg}_G(u)$ denote the maximum degree of $G$ and the degree of vertex $u$ in $G$, respectively.

\begin{thm}
\label{pluspst}
Let $q\in\mathbb{R}\backslash\{0\}$ and $G$ be a graph with vertex PST between $a$ and $b$ at $\tau$. Let $X$ be the graph constructed by taking two copies of $G$ and adding a matching $\mathcal{M}_k$ of size $k<|V(G)|$. If a loop of weight $\omega(u)$ is added to every vertex $u$ of $X$ such that 
\begin{equation*}
\omega(u)=\begin{cases}
\max\operatorname{deg}(G)-\operatorname{deg}_G(u),& \text{whenever $u$ is matched in $\mathcal{M}_k$},\\
\max\operatorname{deg}(G)-\operatorname{deg}_G(u)+1+q,& \text{otherwise},
\end{cases}
\end{equation*}
then the resulting graph with loops $\widetilde{X}$ admits 
PST between $\frac{1}{\sqrt{2}}(\e_a+\e_{\phi(a)})$ and $\frac{1}{\sqrt{2}}(\e_b+\e_{\phi(b)})$ in $X$ relative to $\mathscr{L}$ at $\tau/q$, where $\phi$ is an involution of $X$ such that $u$ and $\phi(u)$ are two copies of the same vertex in $G$ for each $u\in V(G)$.
\end{thm}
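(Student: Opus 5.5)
The plan is to read $\widetilde{X}$ as a graph with the fixed-point-free involution $\phi$ that swaps the two copies of $G$, and to apply Corollary \ref{C1}(1b). That corollary turns plus-state PST in $\widetilde{X}$ into vertex PST between $a$ and $b$ relative to $\widetilde{\mathscr{L}_+}$. Since $\phi$ has no fixed points ($S=\emptyset$), Remark \ref{rem1} gives $\widetilde{\mathscr{L}_+}=\mathscr{L}_G+qA_\phi$. The whole proof then reduces to computing this matrix explicitly and showing it is a shifted, scaled copy of $A(G)$.

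First I check that $\phi$ is an automorphism of $\widetilde{X}$. The matching $\mathcal{M}_k$ only joins a vertex $u$ to its copy $\phi(u)$, so it is $\phi$-invariant. The loop weights satisfy $\omega(u)=\omega(\phi(u))$ because they depend only on $\deg_G(u)$ and on whether $u$ is matched. Taking $T$ to be the first copy of $G$, the half graph is $G$ with inherited loops. The matrix $A_\phi$ is diagonal, with $(A_\phi)_{uu}=1$ exactly when $u$ is matched and $0$ otherwise; no edges other than the matching cross between the copies.

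Next I compute the diagonal of $\Delta_G$, which records degrees in $\widetilde{X}$ with loops counted by their weight as in the row-sum definition. A matched vertex has degree $\deg_G(u)+1+\omega(u)=\max\operatorname{deg}(G)+1$. An unmatched vertex has degree $\deg_G(u)+\omega(u)=\max\operatorname{deg}(G)+1+q$. Adding $qA_\phi$ raises the matched diagonal entries by $q$. Hence
\[
\widetilde{\mathscr{L}_+}=\mathscr{L}_G+qA_\phi=(\max\operatorname{deg}(G)+1+q)I+qA(G).
\]
Writing $c=\max\operatorname{deg}(G)+1+q$, we get $U_{\widetilde{\mathscr{L}_+}}(t)=e^{itc}\exp(itqA(G))$.

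Finally, the hypothesis gives vertex PST from $a$ to $b$ in $G$ at $\tau$. I take this relative to the adjacency matrix, which is the reading that makes the time $\tau/q$ match. Then at $t=\tau/q$ we have $U_{\widetilde{\mathscr{L}_+}}(\tau/q)\e_a=e^{i\tau c/q}\exp(i\tau A(G))\e_a=\gamma\e_b$ with $|\gamma|=1$. If $q<0$ the time is negative; PST at $-t$ follows from PST at $t$ by complex conjugation, since all matrices are real. Corollary \ref{C1}(1b) then yields PST between $\frac{1}{\sqrt2}(\e_a+\e_{\phi(a)})$ and $\frac{1}{\sqrt2}(\e_b+\e_{\phi(b)})$ in $\widetilde{X}$ at $\tau/q$.

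The main point of care is the degree bookkeeping: confirming that the loop weights are designed so that the matched and unmatched diagonal entries of $\mathscr{L}_G+qA_\phi$ coincide. There is no genuine analytic obstacle.
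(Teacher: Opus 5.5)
Your proposal is correct and follows the paper's proof. Like the paper, you use Remark~\ref{rem1} to write $\widetilde{\mathscr{L}_+}=\mathscr{L}_G+qA_\phi$, check that the loop weights make $\Delta_G+qA_\phi=(\max\operatorname{deg}(G)+1+q)I$, and then apply Corollary~\ref{C1}(1b). The paper also relies, without stating them, on the two points you make explicit: the hypothesis means adjacency PST in $G$ (which is what gives the time $\tau/q$), and the matching joins each vertex to its own copy (which makes $A_\phi$ diagonal).
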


\begin{proof}
By construction of $\widetilde{X}$, it follows that there exists an involution $\phi$ of $\widetilde{X}$ such that $u$ and $\phi(u)$ are two copies of the same vertex in $G$ for each $u\in V(G)$. Thus, $\phi$ has no fixed points and one may take $G$ with the loops added as a half graph of $\widetilde{X}$ induced by $\phi$. By Remark \ref{rem1}, the matrix $\widetilde{\mathscr{L}_+}$ in (\ref{L+}) satisfies $\widetilde{\mathscr{L}_+}=\mathscr{L}_G+qA_\phi$. Now, if we index first $k$ rows of $\mathscr{L}_G$ by the vertices matched in $\mathcal{M}_k$, then taking into account the loops, we may write
\begin{equation*}
\Delta_G=(\max\operatorname{deg}(G))I+\begin{bmatrix}I_k & O\\O & (1+q)I_{|V(G)|-k}\end{bmatrix}\quad \text{and}\quad qA_\phi=\begin{bmatrix}	qI_k & O\\	O & O
\end{bmatrix}.
\end{equation*}
Hence, $\Delta_G+qA_\phi$ is a diagonal matrix. Since $\widetilde{\mathscr{L}_+}=\mathscr{L}_G+qA_\phi=(\Delta_G+qA_\phi)+qA(G),$ the assumption that $G$ admits vertex PST between $a$ and $b$ at $\tau$ combined with Corollary \ref{C1}(1b) yields the desired conclusion.
\end{proof}

\begin{rem}
If $G$ in Theorem \ref{pluspst} is a regular graph, then for the theorem to hold, we only need to assign loops of weight $1+q$ to vertices in $X$ that are not matched in $\mathcal{M}_k$.
\end{rem}

\begin{exm}
The three graphs in Figure \ref{fig4} are constructed by taking two copies of $K_2$, $C_4$, and $P_3,$ and adding a matching of sizes 1, 3, and 2, respectively. Adding loops of weights $1+q$, $1+q$ and $q$ as depicted in Figure \ref{fig4}, we get plus PST between $\frac{1}{\sqrt{2}}(\e_a+\e_c)$ and $\frac{1}{\sqrt{2}}(\e_b+\e_d)$ relative to $\mathscr{L}$ at $\frac{\pi}{2q}$ in $(i)$, $\frac{\pi}{2q}$ in $(ii)$ and $\frac{\pi}{\sqrt{2}q}$ in $(iii)$ by Theorem \ref{pluspst}.
\end{exm}

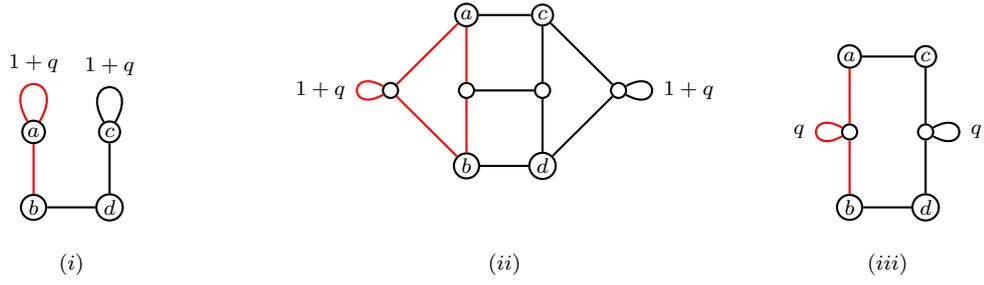
\begin{figure}

	\begin{minipage}[t]{0.32\textwidth}
		\centering
		\begin{tikzpicture}[scale=0.5]
    \node[thick,circle,draw,inner sep=1pt] (C) at (0,0) {\scriptsize$b$};
    \node[thick,circle,draw,inner sep=1pt] (D) at (2,0) {\scriptsize$d$};
    \node[thick,circle,draw,inner sep=1pt] (E) at (0,2) {\scriptsize$a$};
    \node[thick,circle,draw,inner sep=1pt] (F) at (2,2) {\scriptsize$c$};
    \node at (1,-1.5) {\scriptsize$(i)$};
    \draw[thick,style={red}] (E) to[out=120, in=60, looseness=12] node[above] {\scriptsize \textcolor{black}{$1+q$}} (E);
	\draw[thick] (F) to[out=120, in=60, looseness=12] node[above] {\scriptsize $1+q$} (F);
    \draw[thick] (C) -- (D);
    \draw[thick,style={red}] (C) -- (E);
    \draw[thick] (D) -- (F);
\end{tikzpicture}
\end{minipage}
\hfill
\begin{minipage}[t]{0.32\textwidth}
\centering
\begin{tikzpicture}[scale=0.5]
\node[circle,thick,draw,inner sep=2pt] (A) at (-1,-1) {};
\node[circle,thick,draw,inner sep=1pt] (C) at (1,1) {\scriptsize$a$};
\node[circle,thick,draw,inner sep=1pt] (E) at (1,-3) {\scriptsize$b$};
\node[circle,thick,draw,inner sep=2pt] (F) at (1,-1) {};
\node[circle,thick,draw,inner sep=2pt] (a) at (3,-1) {};
\node[circle,thick,draw,inner sep=1pt] (c) at (3,1) {\scriptsize$c$};
\node[circle,thick,draw,inner sep=1pt] (e) at (3,-3) {\scriptsize$d$};
\node[circle,thick,draw,inner sep=2pt] (f) at (5,-1) {};
\node at (2,-5.6) {\scriptsize$(ii)$};

\draw[thick] (a) -- (F);
\draw[thick] (c) -- (C);
\draw[thick] (e) -- (E);
\draw[thick,style={red}] (A) -- (C);
\draw[thick,style={red}] (A) -- (E);
\draw[thick,style={red}] (F) -- (E);
\draw[thick,style={red}] (C) -- (F);
\draw[thick] (a) -- (c);
\draw[thick] (a) -- (e);
\draw[thick] (f) -- (e);
\draw[thick] (c) -- (f);
\draw[thick,style={red}] (A) to[out=210, in=150, looseness=12] node[left] {\scriptsize\textcolor{black}{$1+q$}} (A);
\draw[thick] (f) to[out=330, in=30, looseness=12] node[right] {\scriptsize$1+q$} (f);
\end{tikzpicture}
\end{minipage}
\hfill
\begin{minipage}[t]{0.32\textwidth}
\centering
\begin{tikzpicture}[scale=0.5]
    \node[thick,circle,draw,inner sep=2pt] (C) at (0,0) {};
    \node[thick,circle,draw,inner sep=2pt] (D) at (2,0) {};
    \node[thick,circle,draw,inner sep=1pt] (E) at (0,2) {\scriptsize$a$};
    \node[thick,circle,draw,inner sep=1pt] (F) at (2,2) {\scriptsize$c$};
    \node[thick,circle,draw,inner sep=1pt] (G) at (0,-2) {\scriptsize$b$};
    \node[thick,circle,draw,inner sep=1pt] (H) at (2,-2) {\scriptsize$d$};
    \node at (1,-3.5) {\scriptsize$(iii)$};

    \draw[thick,style={red}] (C) to[out=210, in=150, looseness=12] node[left] {\scriptsize \textcolor{black}{$q$}} (C);
	\draw[thick] (D) to[out=330, in=30, looseness=12] node[right] {\scriptsize $q$} (D);

    \draw[thick] (E) -- (F);
    \draw[thick,style={red}] (C) -- (E);
    \draw[thick] (D) -- (F);
    \draw[thick,style={red}] (C) -- (G);
    \draw[thick] (G) -- (H);
    \draw[thick] (D) -- (H);
\end{tikzpicture}
\end{minipage}
\vspace{-0.15in}
\caption{Planar graphs with plus PST between $\frac{1}{\sqrt{2}}(\e_a+\e_c)$ and $\frac{1}{\sqrt{2}}(\e_b+\e_d)$ relative to $\mathscr{L}$ at $\frac{\pi}{2q}$ in $(i)$, $\frac{\pi}{2q}$ in $(ii)$ and $\frac{\pi}{\sqrt{2}q}$ in $(iii)$; the edges of the half graphs are drawn in red.} 
\label{fig4}
\end{figure}

\section{Complete bipartite graphs}\label{sec3}

Although $K_n$ does not exhibit pair PST, removing an edge from $K_n$ induces pair PST relative to $L$ \cite[Corollary 5.4]{chen}. It was also observed in \cite[Theorem 4]{pal9} that deleting a matching of size at least two from $K_n$ leads to pair PST relative to $A$, $L$ and $Q$. Motivated by these results, we investigate pair PST in special classes of graphs starting with complete bipartite graphs in this section.

In \cite[Corollary 10.5]{god25}, it was shown that $K_{m,n}$ admits Laplacian pair PST if and only if  $m=n=2$ or $(m,n) \in \{(2,4k), (4k,2)\}$. Our goal in this section is to investigate generalized Laplacian pair PST in $K_{m,n}$ under edge perturbations. Let $\mathcal{M}_k$ be a matching of size $k \geq 2$ in $K_{m,n}$ and $\{a,c\},\{b,d\}\in \mathcal{M}_k$, where $a,b$ are in the same partite set. Consider the involution $\phi = (a~b)(c~d)$ on $K_{m,n} - \mathcal{M}_k$, obtained by removing all edges in $\mathcal{M}_k$. When $m = n$, $\mathscr{L}_{-} = (m-1) I + q A(P_2).$ Since vertex PST relative to $\mathscr{L}_{-}$ occurs at $\frac{\pi}{2q}$, Corollary \ref{C1}(1a) implies that pair PST relative to the generalized Laplacian matrix of $K_{m,m} - \mathcal{M}_k$ occurs between $\frac{1}{\sqrt{2}} \left( \mathbf{e}_a - \mathbf{e}_b \right)$ and $\frac{1}{\sqrt{2}} \left( \mathbf{e}_c - \mathbf{e}_d \right)$ at $\frac{\pi}{2q}$ (see left of Figure \ref{f3}). When $m > n$, we add a set of edges $E$ within the larger partite set containing $a$ and $b$, connecting each of the vertices $a$ and $b$ to $ m-n$ additional vertices, chosen to preserve the involution $\phi$. The same argument yields pair PST relative to the generalized Laplacian matrix of $K_{m,n} - \mathcal{M}_k + E$ at $\frac{\pi}{2q}$ between the same pair states (see right of Figure \ref{f3}).

\begin{thm}\label{T3}
Let $\mathcal{M}_k$ be a matching of size $k \geq 2$ in $K_{m,n}$ with $\{a,c\},\{b,d\}\in \mathcal{M}_k$, where $a$ and $b$ are in the same partite set. There is pair PST between $\frac{1}{\sqrt{2}}(\mathbf{e}_a - \mathbf{e}_b)$ and $\frac{1}{\sqrt{2}}(\mathbf{e}_c - \mathbf{e}_d)$ in $X$ at time $\frac{\pi}{2q}$ relative to the generalized Laplacian matrix in the following cases:
\begin{enumerate}
    \item $m = n$ and $X=K_{m,m} - \mathcal{M}_k$ (the graph obtained by removing the edges in $\mathcal{M}_k$).
    \item $m > n$ and $X=K_{m,n} - \mathcal{M}_k + E$, where $E$ is a set of edges added within the larger partite set containing $a$ and $b$, connecting both $a$ and $b$ to $m - n$ additional vertices such that the involution $(a~b)(c~d)$ is preserved.
\end{enumerate}
\end{thm}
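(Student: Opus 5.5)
The plan is to apply Corollary \ref{C1}(1a) with the involution $\phi=(a~b)(c~d)$. This reduces pair PST in $X$ to vertex PST between $a$ and $c$ in the half graph $G$ relative to $\mathscr{L}_-$. Take $T=\{a,c\}$, so $G$ is induced by $\{a,c\}$. Every other vertex of $X$ is fixed by $\phi$, so $S=V(X)\setminus\{a,b,c,d\}$.

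First I will check that $\phi$ is an automorphism.
\begin{itemize}
\item In case 1, $a$ and $b$ lie in one partite set and $c$ and $d$ in the other. After deleting $\mathcal{M}_k$, the neighbourhood of $a$ is the opposite part minus $c$, and that of $b$ is the opposite part minus $d$. Swapping $a\leftrightarrow b$ and $c\leftrightarrow d$ therefore maps edges to edges. Every fixed vertex $w\in S$ is adjacent either to both of $a,b$ or to neither, and likewise for $c,d$.
\item In case 2, $E$ is chosen so that $(a~b)(c~d)$ is preserved, by hypothesis.
\end{itemize}

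Next I compute $\mathscr{L}_-=\mathscr{L}_G-qA_\phi$, with rows indexed by $a,c$.
\begin{itemize}
\item $G$ contains the edge $ac$: $a$ is adjacent to every vertex of the other part except $c$, and $d\neq c$ is in that part, so $ad\in E(X)$; but we need $ac$. Indeed $ac\notin E(X)$ and $ad\in E(X)$. So I instead take $T=\{a,d\}$, where $ad$ is an edge. Correspondingly I will show PST from $\e_a-\e_b$ to $\e_d-\e_c$; this is the same pure state up to the phase $-1$, which is allowed in the definition of PST. Then $A(G)=A(P_2)$.
\item $A_\phi$ records edges between $u\in T$ and $\phi(u')$ for $u'\in T$. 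We have $\phi(a)=b$ and $\phi(d)=c$. The pairs $a,b$ are in the same part and (in case 1, and in case 2 I must check $E$ contains no edge $ab$) nonadjacent. The pair $a,c$ is deleted. The pair $d,b$ is deleted, since $\{b,d\}\in\mathcal{M}_k$. So $A_\phi=O$. If $E$ does contain $ab$, the diagonal shift is still constant, because the degrees of $a$ and $d$ are then both shifted by $-q$, and the argument goes through.
\item Degrees: in case 1, $\deg a=\deg d=m-1$. In case 2, $a$ lies in the part of size $m$; it has $n-1$ neighbours across plus $m-n$ added ones, giving degree $m-1$. The vertex $d$ lies in the part of size $n$; it has $m-1$ neighbours, since it misses only $b$.
\end{itemize}
Hence $\mathscr{L}_-=(m-1)I+qA(P_2)$.

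Since $(m-1)I$ only contributes a global phase, $U_{\mathscr{L}_-}(t)=e^{it(m-1)}\exp(itqA(P_2))$. At $t=\pi/(2q)$ we have $\exp(itqA(P_2))=\cos(\pi/2)I+i\sin(\pi/2)A(P_2)=iA(P_2)$, which gives vertex PST between the two vertices of $P_2$. Corollary \ref{C1}(1a) then yields the claim.

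I expect the main obstacle to be bookkeeping rather than any real difficulty. In case 2 one has to verify that the added edges $E$ keep $\deg a=\deg d$ and create no cross edges in $A_\phi$. I will handle this by arguing directly from the degree count above, together with the observation that any edge of $E$ touching $c$ or $d$ is excluded, since $E$ lies inside the larger part.
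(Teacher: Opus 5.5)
Your proof follows the paper's route: apply Corollary~\ref{C1}(1a) to $\phi=(a~b)(c~d)$, obtain $\mathscr{L}_-=(m-1)I+qA(P_2)$, and read off vertex PST at $\frac{\pi}{2q}$. The paper's formula is exactly the one for your choice $T=\{a,d\}$. Your automorphism check, your degree counts, and $A_\phi=O$ when $ab\notin E(X)$ are all correct.

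Your move away from $T=\{a,c\}$ was unnecessary. The half graph need not contain an edge: with $T=\{a,c\}$ one has $A(G)=O$ but $A_\phi=A(P_2)$. Then $\mathscr{L}_-=(m-1)I-qA(P_2)$ works equally well and gives the stated pair states directly.

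The one step to fix is your fallback for the case $ab\in E$, which is false. The $(d,d)$ entry of $A_\phi$ records whether $d$ is adjacent to $\phi(d)=c$, and it never is. Also, $\deg d=m-1$ is unaffected by $E$, since $E$ lies inside the larger part. So only the $(a,a)$ entry of $\mathscr{L}_-$ shifts, not both. The two readings give:
\begin{enumerate}
\item If $b$ is one of the $m-n$ neighbours of $a$ in its own part, then $\mathscr{L}_-=\mathrm{diag}(m-1-q,\,m-1)+qA(P_2)$.
\item If $b$ is an extra neighbour on top of those, then $\mathscr{L}_-=\mathrm{diag}(m-q,\,m-1)+qA(P_2)$.
\end{enumerate}
A $2\times 2$ symmetric matrix with unequal diagonal entries has no vertex PST, since the two vertices are not even cospectral. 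So in this case the conclusion genuinely fails, except when $q=1$ under the second reading.

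The correct way to close the point is from the hypothesis. $E$ joins $a$ and $b$ to $m-n$ \emph{additional} vertices, i.e.\ vertices other than $a$ and $b$, as in Figure~\ref{f3}. Hence $ab\notin E$ and $A_\phi=O$.
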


\begin{figure}
\centering

\begin{minipage}[t]{0.47\textwidth}
\centering

\begin{tikzpicture}[scale=0.45]
\node[circle,thick,draw,inner sep=1pt] (A) at (-3,0) {\scriptsize$1$};
\node[circle,thick,draw,inner sep=1pt] (B) at (0,0) {\scriptsize$3$};
\node[circle,thick,draw,inner sep=1pt] (C) at (3,0) {\scriptsize$5$};
\node[circle,thick,draw,inner sep=1pt] (D) at (-3,-3.5) {\scriptsize$2$};
\node[circle,thick,draw,inner sep=1pt] (E) at (0,-3.5) {\scriptsize$4$};
\node[circle,thick,draw,inner sep=1pt] (F) at (3,-3.5) {\scriptsize$6$};

 \draw[thick,dashed] (A) -- (D);
 \draw[thick] (A) -- (E);
 \draw[thick] (A) -- (F);
 \draw[thick] (B) -- (D);
 \draw[thick,dashed] (B) -- (E);
 \draw[thick] (B) -- (F);
 \draw[thick] (C) -- (D);
 \draw[thick] (C) -- (E);
 \draw[thick] (C) -- (F);

\end{tikzpicture}
\end{minipage}
\hfill
\begin{minipage}[t]{0.46\textwidth}
\centering

\begin{tikzpicture}[scale=0.43]
\node[circle,thick,draw,inner sep=1pt] (A) at (-3,0) {\scriptsize$1$};
\node[circle,thick,draw,inner sep=1pt] (B) at (0,0) {\scriptsize$3$};
\node[circle,thick,draw,inner sep=1pt] (C) at (3,0) {\scriptsize$5$};
\node[circle,thick,draw,inner sep=1pt] (D) at (6,0) {\scriptsize$7$};
\node[circle,thick,draw,inner sep=1pt] (E) at (-1.5,-3.5) {\scriptsize$2$};
\node[circle,thick,draw,inner sep=1pt] (F) at (1.5,-3.5) {\scriptsize$4$};
\node[circle,thick,draw,inner sep=1pt] (G) at (4.5,-3.5) {\scriptsize$6$};

\draw[thick,dashed] (A) -- (E);
\draw[thick] (A) -- (F);
\draw[thick] (A) -- (G);
\draw[thick] (B) -- (E);
\draw[thick,dashed] (B) -- (F);
\draw[thick] (B) -- (G);
\draw[thick] (C) -- (E);
\draw[thick] (C) -- (F);
\draw[thick] (C) -- (G);
\draw[thick] (D) -- (E);
\draw[thick] (D) -- (F);
\draw[thick] (D) -- (G);
\draw[thick,bend left=40] (A) to (D);
\draw[thick,bend left=30] (B) to (D);

\end{tikzpicture}
\end{minipage}
\caption{$K_{3,3}-\mathcal{M}_k$ and $K_{4,3}-\mathcal{M}_k+E$ with pair PST between $\frac{1}{\sqrt{2}}(\mathbf{e}_1 - \mathbf{e}_3)$ and $\frac{1}{\sqrt{2}}(\mathbf{e}_2 - \mathbf{e}_4)$.}
\label{f3}
\end{figure}
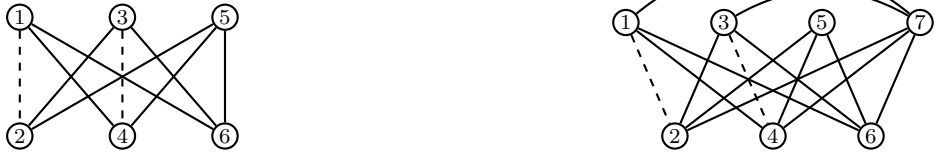
In \cite{pal9}, an infinite family of non-regular graphs that admit pair PST relative to $A$, $L$ and $Q$ between the same pair states at the same time is given. Now, the graphs in Theorem \ref{T3} admit pair PST relative to $\mathscr{L}$ between $\frac{1}{\sqrt{2}}\left( \mathbf{e}_a - \mathbf{e}_b \right)$ and $\frac{1}{\sqrt{2}}\left( \mathbf{e}_c - \mathbf{e}_d \right)$ at $\frac{\pi}{2q}$ (see Figure \ref{f3} for an example with $(a,b,c,d)=(1,3,2,4)$). Moreover, the graphs described in Theorem \ref{T3} admits pair PST relative to $A$ by \cite[Theorem 2]{pal7}. Consequently, Theorem \ref{T3} yields another infinite family of non-regular graphs with pair PST relative to $A$, $L$, and $Q$ between the same pair states at the same time.

\section{Cycles}
\label{sec4}

We let $\mathbb{Z}_n$ be the vertex set of the cycle $C_n$ on $n$ vertices, where vertices $i$ and $j$ are adjacent if and only if $ (i-j) \equiv \pm1 \pmod{n}$. 
By Corollary~\ref{C1}(1a), pair PST relative to $\mathscr{L}$ occurs in $C_4$, $C_6$, and $C_8$, since these graphs admit vertex PST relative to $\mathscr{L}_{-}$. Since cycles are regular graphs, these are the only cycles that admit pair PST relative to $\mathscr{L}$  \cite{kim}. Although $C_5$ does not admit pair PST, Corollary~\ref{C1}(1a) shows that $C_5$ with a loop of weight $1$ at vertices $1$ and $4$ yields Laplacian pair PST between $\frac{1}{\sqrt{2}}(\e_1 - \e_4)$ and $\frac{1}{\sqrt{2}}(\e_2 - \e_3)$ at $\frac{\pi}{2}$. This example prompts us to investigate pair PST in edge-perturbed cycles.

Note that a single edge perturbation of a cycle $C_n$ amounts to a rank-one positive semidefinite update of $L$ and $Q$. For our analysis, we need the normalized eigenvectors of $C_n$ \cite[Lemma 6.4]{god25}. In what follows, we let $\zeta = -1$ if $M=L$, and $\zeta = 1$ otherwise.

\begin{lem}\label{L3}
The eigenvalues of $C_n$ relative to $M\in\{L,Q\}$ are $\theta_j= 2+2\zeta\cos\ob{\frac{2j\pi}{n}},$ where $0 \leq j \leq \lfloor \frac{n}{2} \rfloor.$ The eigenvector for $\theta_0=2\ob{1+\zeta}$ is $\v_0= \frac{1}{\sqrt{n}}[1,1,\ldots,1]^T,$ while the eigenvector for $\theta_{\frac{n}{2}}=2\ob{1-\zeta},$ whenever $n$ is even is $\v_{\frac{n}{2}}=\frac{1}{\sqrt{n}}[1,-1,1,\ldots,1,-1]^T.$ For $1 \leq j < \frac{n}{2},$ the following are eigenvectors for $\theta_j:$
\begin{center}
$\v_j=\sqrt{\frac{2}{n}}\tb{1 ~\cos\ob{\frac{2j\pi}{n}} ~ \cos\ob{\frac{4j\pi}{n}} ~ \cdots~ \cos\ob{\frac{2j(n-1)\pi}{n}}}^T $
\end{center}
and
\begin{center}
$\v_{n-j}=\sqrt{\frac{2}{n}}\tb{0 ~\sin\ob{\frac{2j\pi}{n}} ~ \sin\ob{\frac{4j\pi}{n}} ~ \cdots~ \sin\ob{\frac{2j(n-1)\pi}{n}}}^T.$
\end{center}
Moreover, $\cb{\v_0,\ldots, \v_{n-1}}$ is an orthonormal basis for $\mathbb{R}^n.$
\end{lem}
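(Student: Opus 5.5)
The plan is a direct verification. For $M\in\{L,Q\}$ on $C_n$ we have $M=2I+\zeta A(C_n)$, since $C_n$ is $2$-regular. So it suffices to find an orthonormal eigenbasis of $A(C_n)$ and then shift and scale the eigenvalues.

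\textbf{Step 1: complex eigenvectors.} For each $j\in\mathbb{Z}_n$, take the vector $\w_j$ with entries $(\w_j)_k=\omega^{jk}$, where $\omega=e^{2\pi i/n}$. Using the adjacency rule of $C_n$ (vertex $k$ is adjacent to $k\pm1$),
\[
(A\w_j)_k=\omega^{j(k+1)}+\omega^{j(k-1)}=2\cos\ob{\tfrac{2j\pi}{n}}\,\omega^{jk}.
\]
Hence $M\w_j=\theta_j\w_j$ with $\theta_j=2+2\zeta\cos\ob{\frac{2j\pi}{n}}$. Because $\theta_j=\theta_{n-j}$, the distinct eigenvalues are indexed by $0\le j\le\lfloor n/2\rfloor$. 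As checks, $j=0$ gives $\theta_0=2(1+\zeta)$, and $j=n/2$ (for $n$ even) gives $\theta_{n/2}=2(1-\zeta)$.

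\textbf{Step 2: real eigenvectors.} For $1\le j<n/2$, the real and imaginary parts of $\w_j$ lie in the $\theta_j$-eigenspace, because $M$ is real and $\w_{n-j}=\overline{\w_j}$. These parts are the cosine vector and the sine vector in the statement. For $j=0$ and $j=n/2$ the vector $\w_j$ is already real, namely the all-ones vector and the alternating vector.

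\textbf{Step 3: orthonormality.} This is the main computation. I will use the finite geometric-sum identity $\sum_{k=0}^{n-1}\omega^{mk}=n\,\delta_{m\equiv0 \pmod n}$. Expanding products of cosines and sines with product-to-sum formulas, two vectors with indices $j\neq j'$ in the range $0\le j,j'\le\lfloor n/2\rfloor$ are orthogonal. The cosine and sine vectors for the same $j$ are orthogonal, since $\sum_k\sin\ob{\frac{4jk\pi}{n}}=0$ when $2j\not\equiv0$. The norms are also fixed by this identity: $\sum_k\cos^2\ob{\frac{2jk\pi}{n}}=\sum_k\sin^2\ob{\frac{2jk\pi}{n}}=n/2$ for $1\le j<n/2$. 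This gives the scaling factors $\sqrt{2/n}$, and $1/\sqrt n$ for the two real vectors. The case $2j\equiv0\pmod n$ arises exactly when $j=0$ or $j=n/2$, and it must be handled separately here.

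\textbf{Step 4: counting.} The vectors number $1+2\lceil n/2-1\rceil+[n\text{ even}]=n$. Since they are orthonormal, they form an orthonormal basis of $\mathbb{R}^n$.
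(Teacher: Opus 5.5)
Your proof is correct. The paper gives no argument for Lemma~\ref{L3}; it imports the statement from \cite[Lemma 6.4]{god25}. So your proof is a self-contained replacement for a citation, not a variant of a proof in the paper. Your method is the standard one: write $M=2I+\zeta A(C_n)$, diagonalize the circulant $A(C_n)$ with the Fourier vectors $(\omega^{jk})_k$, then take real and imaginary parts. This removes the dependence on the external reference. Your Step~4, showing the $n$ vectors form an orthonormal basis, is what justifies that the $\theta_j$ exhaust the spectrum.

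One small step is missing. From $\theta_j=\theta_{n-j}$ you conclude that ``the distinct eigenvalues are indexed by $0\le j\le\lfloor n/2\rfloor$.'' That identity only shows the list can be truncated; it does not show the truncated list has no repetitions. Add one line: $\cos$ is strictly decreasing on $[0,\pi]$, so $\theta_0,\ldots,\theta_{\lfloor n/2\rfloor}$ are pairwise distinct. Combined with your basis, this shows that $\theta_0$ and $\theta_{n/2}$ are simple and that each $\theta_j$ with $1\le j<n/2$ has multiplicity exactly two. The paper uses exactly this fact right after the lemma, when it applies Theorem~\ref{T1} to $M_\rho$.

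Two cosmetic remarks. First, the cosine--sine orthogonality needs no restriction such as $2j\not\equiv 0$, since $\sum_{k}\sin\ob{2\pi mk/n}=0$ for every integer $m$. Second, your symbols $\w_j$ and $\omega$ clash with the paper's $\w=\e_0+\zeta\e_b$ and its loop weights $\omega$, so rename them if the argument is inserted into the text.
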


If an edge of weight $\rho$ is added in $C_n$ between the vertices $0$ and $b\in \mathbb{Z}_n\setminus \{0\}$, then the Laplacian and signless Laplacian matrices of the resulting graph $C_n (\rho, b)$ is given by 
\begin{equation*}
M_{\rho}=M+\rho~\w\w^T
\end{equation*}
where $\w=\ob{\e_0+\zeta\e_b}$ and $M\in\cb{L, Q}.$ Since $\theta_j$ has multiplicity two for $1 \leq j < \frac{n}{2}$, Theorem \ref{T1} implies that $\theta_j$ is also an eigenvalue for $M_\rho$. Next, we find an eigenvector of $M_\rho$ associated with $\theta_j.$ If $c_1$ and $c_2$ are scalars, not both zero, then the vector $c_1 \v_j + c_2 \v_{n-j}$ is also an eigenvector of $M$ corresponding to the eigenvalue $\theta_j$. Observe that 
\begin{equation}\label{Cycle ev}
  M_{\rho}\ob{c_1\v_j+c_2\v_{n-j}} =M\ob{c_1\v_j+c_2\v_{n-j}}+\rho\ob{c_1\w^T\v_j+c_2\w^T\v_{n-j}}\w,  
\end{equation}

where $\w^T\v_j=\sqrt{\frac{2}{n}}\ob{1+\zeta\cos\ob{\frac{2bj\pi}{n}}}$ and $\w^T\v_{n-j}=\sqrt{\frac{2}{n}}~\zeta \sin \ob{\frac{2bj\pi}{n}}$ by Lemma \ref{L3}. Consequently, an eigenvector of $M_{\rho}$ corresponding to $\theta_j$ is obtained as
\[\z_j=\begin{cases}
\v_j+\v_{n-j},& \text{if }\w^T\v_{n-j}= 0 =\w^T\v_j,\\
\ob{\w^T\v_{n-j}}\v_j- \ob{\w^T\v_j}\v_{n-j},& \text{otherwise}
\end{cases}
\]
where we have taken $c_1=c_2=1$ if $\w^T\v_{n-j}= 0 =\w^T\v_j$, and $c_1=\w^T\v_{n-j}, c_2=-\w^T\v_{j}$ otherwise. Consequently, if $\w^T\v_{n-j}= 0 =\w^T\v_j$ and $k \in \mathbb{Z}_n,$ then
\begin{equation}\label{E5}
  \e_k^T \z_j= (2/\sqrt{n}) \cos{\bigg( (2jk\pi/n)-(\pi/4)\bigg)}. 
\end{equation}
In the remaining case,
\begin{equation}\label{E6}
\e_k^T\z_j=\begin{cases}
-\frac{4}{n} \cos{\left( \frac{bj\pi}{n} \right)} \sin{\left( \frac{(2k - b)j\pi}{n} \right)}, & \text{whenever } \zeta = 1, \\
-\frac{4}{n} \sin{\left( \frac{bj\pi}{n} \right)} \cos{\left( \frac{(2k - b)j\pi}{n} \right)}, & \text{whenever } \zeta = -1.
\end{cases}
\end{equation}
We now provide a necessary condition for vertex PST in $C_n (\rho, b)$ relative to $L$ and $Q$.

\begin{lem}\label{L4}
Let $n \geq 7$, $\rho \in \mathbb{Z}^+$ and $b \in \mathbb{Z}_n\setminus \cb{0}$.
If $C_n (\rho, b)$ has vertex PST, then: 
\begin{enumerate}
  
  \item If $M=L$, then $\frac{b}{2} + \frac{n}{4}$ and $\frac{b}{2} + \frac{3n}{4}$ are integers, and PST occurs only between these two vertices.

   \item If $M=Q$, then both $\frac{b}{2}$ and $\frac{n + b}{2}$ are integers whenever $b \neq \frac{n}{2}$; and $\frac{3n}{8}$ and $\frac{7n}{8}$ are integers otherwise. In either case, PST occurs only between these two vertices.
   
\end{enumerate}
\end{lem}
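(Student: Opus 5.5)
We use the necessary condition of strong cospectrality from \cite[Lemma 5.1(1)]{god25}. If PST occurs between vertices $u$ and $v$ of $C_n(\rho,b)$, then for every eigenvalue $\theta$ in the support of $\e_u$ we have $F_\theta\e_u=\pm F_\theta\e_v$. For each $1\le j<\frac n2$, $\theta_j$ is an eigenvalue of $M_\rho$ by Theorem \ref{T1} with the eigenvector $\z_j$ constructed above. We will get the constraints by evaluating $\e_u^T\z_j$ and $\e_v^T\z_j$ through (\ref{E5}) or (\ref{E6}), not by computing $F_\theta$ in full.

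First we pin down the multiplicities. Interlacing shows each $\theta_j$ with $1\le j<\frac n2$ has multiplicity at least one in $M_\rho$. Its multiplicity is exactly one unless $\w^T\v_j=\w^T\v_{n-j}=0$, in which case it stays two. If some other eigenvalue of $M_\rho$ coincided with $\theta_j$, we would treat that as a separate case. When the eigenspace is spanned by $\z_j$, strong cospectrality forces $|\e_u^T\z_j|=|\e_v^T\z_j|$, and if this entry is nonzero the signs must agree with the sign on the other eigenvalues. By (\ref{E6}), for $\zeta=-1$ this reads
\[
\sin\!\left(\tfrac{bj\pi}{n}\right)\left|\cos\!\left(\tfrac{(2u-b)j\pi}{n}\right)\right|=\sin\!\left(\tfrac{bj\pi}{n}\right)\left|\cos\!\left(\tfrac{(2v-b)j\pi}{n}\right)\right| \quad\text{for all } j,
\]
and the $Q$ case is the analogue with $\sin$ and $\cos$ swapped.

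Next we turn this into arithmetic. The factor $\sin(bj\pi/n)$ (resp.\ $\cos(bj\pi/n)$) is nonzero for $j=1$ and for most small $j$. Equality of $|\cos(\alpha j)|$ and $|\cos(\beta j)|$ for $j=1,2$ gives $\alpha\equiv\pm\beta\pmod{\pi}$ at $j=1$, and the $j=2$ relation removes the trivial branch $u=v$. For $L$ this should force $(2u-b)\equiv-(2v-b)\pmod n$, so $u+v\equiv b\pmod n$. It should also force the $j=1$ values to reflect the $\pm$ sign pattern that matches the $\theta_0$ contribution. Since $\e_u$ and $\e_v$ have the same projection onto $\v_0$, the sign on $\theta_0$ is $+$. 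We then impose the alternating sign pattern coming from the PST ratio condition on eigenvalue differences, or we use the sign constraints obtained at $j=1,2,3$ (this is where $n\ge 7$ enters, guaranteeing three usable values of $j$ below $n/2$). Together these should force $u-v\equiv \frac n2\pmod n$ together with $\cos(\tfrac{(2u-b)\pi}{n})\cdot\cos(\tfrac{(2v-b)\pi}{n})$ having the required sign. That gives $u=\frac b2+\frac n4$ and $v=\frac b2+\frac{3n}4$, up to relabeling. For $Q$ the same reasoning with $\sin$ gives $\{u,v\}=\{\frac b2,\frac{n+b}2\}$. The exception is the degenerate case where $\cos(bj\pi/n)$ vanishes for odd $j$, namely $b=\frac n2$. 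There the odd-$j$ information is lost, and using the even $j$ together with (\ref{E5}) for the multiplicity-two eigenvalues yields the vertices $\frac{3n}8$ and $\frac{7n}8$. Uniqueness of the pair then follows from monogamy of PST.

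The main obstacle is the sign bookkeeping. Equality of absolute values alone gives several candidate pairs, and we must exclude spurious solutions such as $u\equiv v$ or $u+v\equiv b$ without antipodality. We also have to handle the eigenvalues $\theta_j$ of multiplicity two, where the projection is not rank one. Our plan is to first use $j=1$ and $j=2$ to restrict to $u+v\equiv b$ or $u-v\equiv\frac n2$. We then check the sign condition at consecutive $j$ directly from the explicit formulas, and use the case $b=\frac n2$ (where both $\w^T\v_j$ vanish for certain $j$) as the separate subcase handled through (\ref{E5}).
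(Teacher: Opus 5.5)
\textbf{There is a genuine gap.} Strong cospectrality combined with sign bookkeeping on the entries $\e_k^T\z_j$ cannot single out the vertices $\frac b2+\frac n4$ and $\frac b2+\frac{3n}4$.

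The reason is symmetry. The map $\sigma:k\mapsto b-k$ is an automorphism of $C_n(\rho,b)$ that swaps $0$ and $b$, so $M_\rho$ commutes with its permutation matrix. Whenever the spectrum of $M_\rho$ is simple, every eigenvector is therefore $\sigma$-symmetric or $\sigma$-antisymmetric. Hence every vertex $u$ with $u\neq b-u$ is strongly cospectral with $b-u$, and the sign pattern does not depend on $u$. For $L$ this pattern is $+$ on $\v_0$ and on every $\z_j$, by \eqref{E6}, and $-$ on the remaining eigenvectors.

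A concrete case is $n=7$, $b=2$, $\rho=1$, $M=L$. There the $\theta_j$ are the roots of the irreducible cubic $x^3-7x^2+14x-7$. The antisymmetric block, in the basis $\e_0-\e_2,\ \e_6-\e_3,\ \e_5-\e_4$, has characteristic polynomial $x^3-9x^2+24x-17$. So the spectrum is simple, and vertices $3$ and $6$ are strongly cospectral, even though $\frac b2+\frac n4\notin\mathbb{Z}$.

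Consequently your equalities $|\e_u^T\z_j|=|\e_v^T\z_j|$ hold identically for $v=b-u$, and the signs at $j=1,2,3$ carry no information about $u$. The step ``these should force $u-v\equiv\frac n2$'' cannot be carried out. Two smaller points on the $Q$ case:
\begin{itemize}
\item For $b=\frac n2$, $\theta_1$ keeps multiplicity two and strong cospectrality yields only antipodality. The values $\frac{3n}8,\frac{7n}8$ are just the zeros of the particular representative $\z_1=\v_1+\v_{n-1}$, so they cannot come out of your argument.
\item $\v_0$ is not an eigenvector of $Q_\rho$, so your $\theta_0$ sign argument works only for $L$.
\end{itemize}

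\textbf{The missing ingredient is arithmetic on the eigenvalues, not on the eigenvectors.} Since $M_\rho$ has integer entries, \cite[Theorem 4]{kirk4} imposes an integrality condition on the eigenvalues in the support of any vertex involved in PST. The eigenvalue $\theta_1=2+2\zeta\cos(2\pi/n)$ violates it: $\cos(2\pi/n)$ has algebraic degree $\frac{\phi(n)}{2}\ge 2$, so $\theta_1\notin\mathbb{Z}$ for $n\ge 7$.

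Moreover, $\e_k^T\z_1\neq 0$ already places $\theta_1$ in the support of $k$, and this direction needs no multiplicity bookkeeping. So both PST vertices must satisfy $\e_k^T\z_1=0$. Solving this single equation from \eqref{E5} and \eqref{E6} at $j=1$ gives exactly the two vertices listed in each case, and PST can only occur between them.

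This, not the availability of three values of $j$, is where $n\ge 7$ enters. Your passing mention of the ratio condition points the right way. However, it must be combined with the algebraic degree of $\theta_1$ to exclude $\theta_1$ from the support, rather than used to fix signs.
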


\begin{proof}
As $0 \leq \frac{(2k - b)\pi}{n} < 2\pi$, \eqref{E5} and \eqref{E6} imply that $\e_k^T \z_1 = 0$ if and only if either: (i) $\zeta = 1$, $b \neq \frac{n}{2}$ and $k = \frac{b}{2}$ or $k = \frac{n + b}{2}$; (ii) $\zeta = 1$, $b = \frac{n}{2}$ and $k=\frac{3n}{8}$ or $k=\frac{7n}{8}$; or (iii) $\zeta = -1$ and $k = \frac{b}{2} + \frac{n}{4}$ or $k = \frac{b}{2} + \frac{3n}{4}$. If $\e_k^T \z_1 \neq 0,$ then $\theta_1= 2+2\zeta \cos \ob{\frac{2\pi}{n}}$ belongs to the support of the vertex $k.$ Now, $\cos\ob{\frac{2\pi}{n}}$ is an algebraic integer of degree $\frac{\phi(n)}{2},$ where $\phi(n)$ is the Euler totient function. Thus, $\theta_1\notin\mathbb{Z}$ when $n \geq 7.$ By \cite[Theorem 4]{kirk4}, if $C_n (\rho, b)$ has vertex PST, then the following conditions hold. For the Laplacian case, both $\frac{b}{2} + \frac{n}{4}$ and $\frac{b}{2} + \frac{3n}{4}$ must be integers and PST occurs between these two vertices. For the signless Laplacian, when $b \neq \frac{n}{2},$ both $\frac{b}{2}$ and $\frac{n + b}{2}$ are integers and PST occurs only between them; and when $b = \frac{n}{2},$ both $\frac{3n}{8}$ and $\frac{7n}{8}$ are integers and PST occurs only between them.
\end{proof}

We now show that vertex PST generally does not occur in edge-perturbed cycles.

\begin{thm}\label{T4}
Let $\rho \in \mathbb{Z}^+$ and $b \in \mathbb{Z}_n\setminus \cb{0}$. The graph $C_n (\rho, b)$
does not admit vertex PST relative to $L$
for all $n \geq 15$, and relative to $Q$ for all $n \geq 9$.
\end{thm}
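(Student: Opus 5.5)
Lemma \ref{L4} already confines vertex PST in $C_n(\rho,b)$ to one specific pair of vertices, namely those $k$ with $\e_k^T\z_1=0$. The plan is to show that this pair cannot be strongly cospectral, or that the PST ratio condition fails, once $n$ is large. The natural tool is a second eigenvalue $\theta_j$ with $j\ge 2$. By \eqref{E5} and \eqref{E6}, $\e_k^T\z_j$ is an explicit trigonometric expression. For the candidate pair $(k,k')$ we want some $j$ such that $\theta_j$ is irrational, $\theta_j$ lies in the support of $k$ (so $\e_k^T\z_j\neq 0$), and the eigenspace condition fails.

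The key observation is that $\theta_j$ for $1\le j<n/2$ remains an eigenvalue of $M_\rho$. Generically its eigenspace for $M_\rho$ is one-dimensional, spanned by $\z_j$; the other direction in the old eigenspace is perturbed away. If $\theta_j$ is a simple eigenvalue of $M_\rho$, strong cospectrality requires $\e_k^T\z_j=\pm\e_{k'}^T\z_j$. If $\theta_j$ is irrational of degree $\ge2$, then \cite[Theorem 4]{kirk4}, as used in Lemma \ref{L4}, forces $\theta_j$ to lie outside the support of any vertex involved in PST. The cleanest route is therefore to reuse Lemma \ref{L4}'s argument with $j=2$ (and $j=3$ or $4$ if needed). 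Since $\cos(2j\pi/n)$ has degree $\varphi(n/\gcd(n,j))/2$, $\theta_j$ is nonintegral as soon as $n/\gcd(n,j)\ge7$. For the PST vertex $k$ this forces $\e_k^T\z_j=0$ simultaneously for several values of $j$.

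The steps, in order, are as follows.
\begin{enumerate}
\item In the Laplacian case, substitute $k=\frac b2+\frac n4$ into \eqref{E6} with $\zeta=-1$. This gives $\e_k^T\z_j=-\frac4n\sin(\frac{bj\pi}{n})\cos(\frac{j\pi}{2})$, which vanishes for even $j$ regardless of $b$. So take $j$ odd, say $j=3$ (and $j=5$). Vanishing then forces $\sin(3b\pi/n)=0$, i.e.\ $n\mid 3b$, which is impossible for $0<b<n$ unless $b\in\{n/3,2n/3\}$. In that case use $j=5$ or $j=7$ to get $n\mid 5b$ or $n\mid 7b$, a contradiction.
\item Also rule out the degenerate branch $\w^T\v_j=\w^T\v_{n-j}=0$ using \eqref{E5}. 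In that branch $\cos(2jk\pi/n-\pi/4)\neq0$ for integer $k$ unless $8\mid n$-type congruences hold, and these can be excluded by choosing $j$ appropriately.
\item Each chosen $j$ must have $n/\gcd(n,j)\ge7$ so that $\theta_j\notin\mathbb Z$. The thresholds $n\ge15$ for $L$ (so that $j$ up to $7$ still gives $n/\gcd\ge 7$ in the worst cases, e.g.\ $n/7$ or $n/5$ too small) and $n\ge9$ for $Q$ presumably come exactly from this requirement.
\item In the signless case, take $k=b/2$ in \eqref{E6} with $\zeta=1$. This gives $\e_k^T\z_j=-\frac4n\cos(\frac{bj\pi}n)\sin(0)=0$ for every $j$, so this vertex has small support and a different argument is needed. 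Here I would instead show that the support of $k$ contains only rational eigenvalues and eigenvalues from the $\mathscr{L}_+$-like part. Alternatively, exploit the involution $x\mapsto b-x$, which fixes $b/2$, so that the vertex $b/2$ is fixed by an automorphism also mapping $(n+b)/2$ to itself. Under that involution both candidate vertices are fixed, and PST between them requires analysing the new non-$\theta_j$ eigenvalues created by the rank-one update. These are roots of the secular equation $1+\rho\sum_j\frac{(\w^T\v_j)^2+(\w^T\v_{n-j})^2}{\theta-\theta_j}=0$ restricted to the symmetric eigenvectors. For these I would apply the ratio condition $\frac{\theta_r-\theta_s}{\theta_p-\theta_q}\in\mathbb Q$ from \cite{kirk4} combined with interlacing (Theorem \ref{T1}). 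Each new eigenvalue lies strictly between consecutive $\theta_j$, so the gaps are too irregular for the support to sit in a single quadratic field.
\end{enumerate}

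The main obstacle is step 4 (and the $b=n/2$ case with $k=3n/8$): there the easy trigonometric vanishing argument gives no contradiction. One must handle the eigenvalues of $M_\rho$ that are not eigenvalues of $M$, via the secular equation and interlacing, together with the requirement that the support be $\{\,a+b_i\sqrt\Delta\,\}$ with integer $a$ and integers $b_i$.
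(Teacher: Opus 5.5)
Your proposal has two genuine gaps. The first is in Step~1, where the parity is backwards, and the second is that the hard signless Laplacian subcase is left unproved.

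\textbf{Laplacian case.} With $k=\frac b2+\frac n4$ you have $2k-b=\frac n2$, so
$\e_k^T\z_j=-\frac4n\sin(\frac{bj\pi}{n})\cos(\frac{j\pi}{2})$.
This vanishes for \emph{every odd} $j$, whatever $b$ is. Your choices $j=3,5,7$ therefore give no information at all, and in particular no condition $n\mid 3b$. The information sits in even $j$:
\begin{itemize}
\item If $b\neq\frac n2$, then $\e_k^T\z_2=\frac4n\sin(\frac{2b\pi}{n})\neq0$.
\item If $b=\frac n2$, then $\w^T\v_2=\w^T\v_{n-2}=0$. The whole two-dimensional $\theta_2$-eigenspace of $L$ survives, so $\theta_2$ lies in the support of every vertex.
\end{itemize}
This also makes your Step~2 pointless. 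In the degenerate branch the full eigenspace survives, so you should not test support against the single vector $\v_j+\v_{n-j}$.

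Since $\frac b2+\frac n4\in\mathbb{Z}$ forces $n$ to be even, $\theta_2=2-2\cos(\frac{4\pi}{n})\notin\mathbb{Z}$ for $n\geq 14$. The integrality criterion already invoked in Lemma~\ref{L4} then finishes the Laplacian case.

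The paper argues differently: it claims $\theta_2,\theta_3$ are both in the support and uses $|\theta_3-\theta_2|<1$ to rule out periodicity. By the computation above, $\theta_3$ is generally \emph{not} in the support; for instance $n=16$, $b=2$, $k=5$ gives $\e_5^T\z_3=0$. A gap argument would have to pair $\theta_2$ with $\theta_4$, which only gives gap $<1$ for $n\geq 21$. The single-eigenvalue argument with $\theta_2$ is the cleaner repair.

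\textbf{Signless Laplacian case.} The subcase with $n,b$ even and $b\neq\frac n2$ is where the real work lies, and your proposal does not do it. You correctly note that $\e_{b/2}^T\z_j=\e_{(n+b)/2}^T\z_j=0$ for all $j$, and you find the involution $\phi:x\mapsto b-x$. However, your secular-equation/interlacing sketch ("the gaps are too irregular") is not an argument: nothing you state stops the new eigenvalues from lying in a common quadratic field.

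The missing idea is to apply Theorem~\ref{T2} to this involution.
\begin{itemize}
\item Both candidate vertices are exactly the fixed points of $\phi$, so the walk from $\e_{b/2}$ stays in the $\widetilde{\mathscr L}_+$ block.
\item That block is tridiagonal: a weighted path $\frac b2,\ \{\frac b2\pm1\},\ \dots,\ \{\frac b2\pm(\frac n2-1)\},\ \frac{n+b}2$ with end edges of weight $\sqrt2$.
\item Every diagonal entry is $2$ except $2+2\rho$ at the orbit $\{0,b\}$. The chord adds $\rho$ to the degree and a further $q\rho=\rho$ through $A_\phi$.
\item This entry is at distance $\frac b2$ from one end and $\frac{n-b}2$ from the other, so the matrix is not persymmetric when $b\neq\frac n2$.
\item End-to-end PST on a weighted path with potentials forces persymmetry \cite[Lemma 2]{kay2}, so there is no PST.
\end{itemize}
This is the paper's argument.

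The remaining $Q$ subcases are easy, contrary to what you say. Odd $n$ is excluded by Lemma~\ref{L4} directly. For $b=\frac n2$ and $k=\frac{3n}8$ (so $8\mid n$ and $n\ge16$), the trigonometric route works. First, $\e_k^T\z_2=\frac4n\neq0$. Second, every odd $j$ falls in the degenerate branch, so $\theta_3$ is in the support as well. Finally, $|\theta_3-\theta_2|=4\sin(\frac{5\pi}n)\sin(\frac\pi n)<1$ for $n\geq16$. This is how the paper concludes that subcase.
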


\begin{proof}
Consider the Laplacian case. If $\frac{b}{2} + \frac{n}{4}$ and $\frac{b}{2} + \frac{3n}{4}$ are integers, then from \eqref{E5} and \eqref{E6}, the eigenvalues $\theta_2$ and $\theta_3$ lie in the support of both vertices $\frac{b}{2} + \frac{n}{4}$ and $\frac{b}{2} + \frac{3n}{4}$. By \cite[Theorem 4]{kirk4}, both $\frac{b}{2} + \frac{n}{4}$ and $\frac{b}{2} + \frac{3n}{4}$ are not periodic when $n \geq 15$ because $|\theta_3-\theta_2| = 4\left|\sin\ob{\frac{5\pi}{n}} \sin\ob{\frac{\pi}{n}} \right|
    <1$.
For the signless Laplacian, if $C_n (\rho, b)$ has an odd number of vertices, then the result is immediate from Lemma \ref{L4}. When $n\equiv 0\pmod 8$ and $b=\frac{n}{2},$ we deduce from \eqref{E5} and \eqref{E6} that $\theta_2$ and $\theta_3$ lie in the support of the vertices $\frac{3n}{8}$ and $\frac{7n}{8}$. Hence, there is no vertex PST in this case. When both $n$ and $b~(\neq \frac{n}{2})$ are even, $C_n (\rho, b)$ admits an involution $\phi$ that maps vertex $0$ to $b$ and fixing only the vertices $\frac{b}{2}$ and $\frac{n+b}{2}.$ For every choice of $b\neq \frac{n}{2}$, we obtain a half graph that is a path on $\frac{n-2}{2}$ vertices, with a vertex $j$ of degree $2+\rho$, for some $j<\lfloor \frac{n-2}{4}\rfloor$. We proceed by using Corollary \ref{C1}. When $q=-1,$ the matrix $\widetilde{\mathscr{L}}_+$, as described in Theorem \ref{T2}, is permutation-similar to a tridiagonal matrix with diagonal entries $2,$ except for a single entry $2+2\rho.$
Since the tridiagonal matrix is not symmetric about the anti-diagonal, it follows from \cite[Lemma 2]{kay2} that the path representing the tridiagonal matrix does not have vertex PST between its end vertices. Accordingly, vertex PST does not occur in $C_n (\rho, b)$ between $\frac{b}{2}$ and $\frac{n+b}{2}.$ 
\end{proof}
For pair PST in a cycle with a single edge perturbation, we have the following result.

\begin{lem}\label{L5}
Let $n \geq 13$, $\rho \in \mathbb{Z}^+$ and $b \in \mathbb{Z}_n\setminus \cb{0}$. If $C_n (\rho, b)$ exhibits pair PST from $\frac{1}{\sqrt{2}}(\e_k-\e_l),$ then the following hold:
\begin{enumerate}
    \item If $M=L$, then $k+l\in\{b,n+b\}.$
    
    \item Suppose $M=Q$. If \( b \ne \frac{n}{2} \), then \( n \equiv 0 \pmod{2} \) and \( k + l = b + \frac{n}{2} \) or \( k + l = b + \frac{3n}{2} \). Otherwise, \( b = \frac{n}{2} \), then \( n \equiv 0 \pmod{4} \) and \( k + l = \frac{n}{4} \) or \( k + l = \frac{5n}{4} \).
\end{enumerate}
\end{lem}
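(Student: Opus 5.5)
We follow the template of Lemma \ref{L4}: show that the eigenvalue $\theta_1$ (and, where needed, other $\theta_j$ with small $j$) lies in the support of the pair state, and then invoke a periodicity/integrality obstruction. Write $\x=\frac{1}{\sqrt{2}}(\e_k-\e_l)$ and use the eigenvectors $\z_j$ of $M_\rho$ for $\theta_j$, $1\le j<\frac n2$, constructed above. By \eqref{E5} and \eqref{E6}, $\z_j^T\x=\frac{1}{\sqrt2}(\e_k^T\z_j-\e_l^T\z_j)$ is an explicit trigonometric expression.

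Consider first the generic case $\w^T\v_j,\w^T\v_{n-j}$ not both zero. For $M=L$, \eqref{E6} gives, up to the nonzero factor $-\frac{4}{n}\sin(\frac{bj\pi}{n})$,
\[
\cos\ob{\tfrac{(2k-b)j\pi}{n}}-\cos\ob{\tfrac{(2l-b)j\pi}{n}}=-2\sin\ob{\tfrac{(k+l-b)j\pi}{n}}\sin\ob{\tfrac{(k-l)j\pi}{n}}.
\]
For $M=Q$, one obtains instead a factor $\cos(\frac{bj\pi}{n})$ times a difference of sines, which equals $2\cos(\frac{(k+l-b)j\pi}{n})\sin(\frac{(k-l)j\pi}{n})$. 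Take $j=1$. Since $0<|k-l|<n$, the factor $\sin(\frac{(k-l)\pi}{n})$ never vanishes. Hence $\theta_1\in\Lambda_\x(M_\rho)$ unless the remaining factor vanishes. For $L$ this happens exactly when $k+l-b\equiv 0\pmod n$, i.e.\ $k+l\in\{b,n+b\}$ given $0\le k+l<2n$. For $Q$ it happens when $k+l-b\equiv \frac n2\pmod n$, which forces $n$ even and $k+l\in\{b+\frac n2,b+\frac{3n}{2}\}$. When $b=\frac n2$ and $n$ is even, the prefactor $\cos(\frac{b\pi}{n})=0$ for $j=1$, so $\z_1$ comes from \eqref{E5}. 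Redoing the computation gives $\z_1^T\x\propto \sin(\frac{(k+l)\pi}{n}-\frac{\pi}{4})\sin(\frac{(k-l)\pi}{n})$, which vanishes iff $k+l\equiv\frac n4\pmod n$. This yields $n\equiv0\pmod 4$ and $k+l\in\{\frac n4,\frac{5n}4\}$.

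It then remains to rule out pair PST whenever $\theta_1\in\Lambda_\x(M_\rho)$. PST from $\x$ implies that $\x$ is periodic. We would use the analogue of \cite[Theorem 4]{kirk4} for real pure states (the ratio condition): the support eigenvalues are either all integers or all of the form $\frac{a+b_i\sqrt{\Delta}}{2}$. Since $M_\rho$ is an integer matrix and $\theta_1=2+2\zeta\cos(\frac{2\pi}{n})$ has degree $\varphi(n)/2\ge 3$ for $n\ge 13$, $\theta_1$ is neither an integer nor quadratic, so periodicity fails. To be safe, we would also note that the algebraic conjugates of $\theta_1$ are the $\theta_j$ with $\gcd(j,n)=1$, all of which would have to lie in the support. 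This is consistent with the conclusion.

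The main obstacle is the bookkeeping in the degenerate case $b=\frac n2$ (and for $L$ the case where $\sin(\frac{b\pi}{n})=0$, which cannot occur since $0<b<n$), together with correctly matching the sign conventions of \eqref{E5}--\eqref{E6} to get the exact residues $\frac n4$ versus $\frac{3n}{4}$. We would verify that case carefully by direct expansion of $\v_1+\v_{n-1}$.
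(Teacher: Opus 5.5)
This is essentially the paper's proof. You compute $(\e_k-\e_l)^T\z_1$ from \eqref{E5}--\eqref{E6} by sum-to-product in the three cases ($L$; $Q$ with $b\neq\frac n2$; $Q$ with $b=\frac n2$, where $\z_1=\v_1+\v_{n-1}$), note that $\sin(\frac{(k-l)\pi}{n})\neq 0$, and exclude $\theta_1$ from the support because it is neither an integer nor a quadratic integer for $n\ge 13$; that is exactly what \cite[Theorem 3.2]{god25} (your ratio-condition analogue) forbids for a periodic pair state.

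The one slip, which the paper shares (its claim $0\le k+l-b<2n$ is false), is the range step for $M=Q$ with $b\neq\frac n2$. Since only $1\le k+l\le 2n-3$ is known, the congruence $k+l\equiv b+\frac n2\pmod n$ also admits $k+l=b-\frac n2$ when $b>\frac n2$; for example, $n=14$, $b=10$, $k+l=3$. So the argument actually yields this congruence, not the two listed values.
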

\begin{proof}
Let $n \geq 13$ so that $\theta_1=2+2\zeta \cos\ob{\frac{2\pi}{n}}$ is not a quadratic integer. Accordingly, by \cite[Theorem 3.2]{god25}, there is no pair PST in $C_n (\rho, b)$ from the pair state $\frac{1}{\sqrt{2}}(\e_k-\e_l)$ whenever $\theta_1$ belongs to its support. It follows from \eqref{E5} and \eqref{E6} that
\[
(\e_k-\e_l)^T\z_1=\begin{cases}
\frac{-4}{\sqrt{n}}~ \sin{\left( \frac{(k+l)\pi}{n}-\frac{\pi}{4}\right)}\sin{\left( \frac{(k-l)\pi}{n}\right)}, & \zeta = 1 \text{~and~} b=\frac{n}{2}, \\
-\frac{8}{n}~ \cos{\left( \frac{b\pi}{n} \right)} \cos{\left( \frac{(k+l-b)\pi}{n} \right)}\sin{\left( \frac{(k-l)\pi}{n} \right)}, & \zeta = 1 \text{~and~} b\neq \frac{n}{2}, \\
\frac{8}{n}~ \sin{\left( \frac{b\pi}{n} \right)} \sin{\left( \frac{(k+l-b)\pi}{n} \right)}\sin{\left( \frac{(k-l)\pi}{n} \right)}, &  \zeta = -1.

\end{cases}
\]
Note that \( \sin\left( \frac{(k - l)\pi}{n} \right) \neq 0 \) since \( 0 < k - l < n \). As \( 0 \leq k + l - b < 2n \), we have \( (\e_k - \e_l)^T \z_1 = 0 \) under the following conditions. If $M=L$, i.e., \( \zeta = -1 \), then the expression vanishes if and only if $k+l\in\{b,n+b\}.$ If $M=Q$, i.e., \( \zeta = 1 \), then the expression vanishes if and only if either (i) \( b \neq \frac{n}{2} \) and \( k + l = b + \frac{n}{2} \) or \( b + \frac{3n}{2} \), or (ii)
 \( b = \frac{n}{2} \) and \( k + l = \frac{n}{4} \) or \( \frac{5n}{4} \).
This completes the proof.
\end{proof}

\begin{exm}
The edge-perturbed cycle $C_n (\rho, b)$ has pair PST in the following cases:
	\begin{enumerate}
		\item $(n,b)=(3,1)$, $0<\rho\in\mathbb{Z}$, between $\frac{1}{\sqrt{2}}(\e_0-\e_2)$ and $\frac{1}{\sqrt{2}}(\e_1-\e_2)$ at $\frac{\pi}{2\rho}$ relative to $L$. 
		\item $(n,b)=(4,1)$, $\rho=1$, between $\frac{1}{\sqrt{2}}(\e_0-\e_1)$ and $\frac{1}{\sqrt{2}}(\e_2-\e_3)$ at $\frac{\pi}{2}$ relative to $Q$. 
		\item $(n,b)=(4,2)$, $\rho=1$, between $\frac{1}{\sqrt{2}}(\e_0-\e_1)$ and $\frac{1}{\sqrt{2}}(\e_0-\e_3)$ at $\frac{\pi}{2}$ relative to $L$.
		\item $(n,b)=(4,2)$, $\rho=2$, between $\frac{1}{\sqrt{2}}(\e_0-\e_1)$ and $\frac{1}{\sqrt{2}}(\e_2-\e_3)$ at $\frac{\pi}{2}$ relative to $L$.
	\end{enumerate}

\end{exm}
We now show that $C_n (\rho, b)$ does not admit pair PST in general. 

\begin{thm}
\label{Q}
Let $M=Q$, $\rho \in \mathbb{Z}^+$ and $b \in \mathbb{Z}_n\setminus \cb{0}.$ The graph $C_n (\rho, b)$ does not exhibit pair PST whenever (i) $n \geq 16$ and $b=\frac{n}{2},$ or (ii) $n \geq 22$ and $b\neq\frac{n}{2}$.
\end{thm}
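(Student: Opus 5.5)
Start from Lemma \ref{L5}(2). Since $n\geq 16$ in case (i) and $n\geq 22$ in case (ii), we have $n\ge 13$. So any pair PST from $\frac{1}{\sqrt{2}}(\e_k-\e_l)$ relative to $Q$ forces $\theta_1\notin\Lambda$. Concretely: if $b\neq\frac n2$, then $n$ is even and $k+l\equiv b+\frac n2 \pmod n$; if $b=\frac n2$, then $4\mid n$ and $k+l\equiv \frac n4\pmod n$.

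The strategy mirrors the proof of Theorem \ref{T4}. The state is periodic, and PST requires periodicity (in fact the ratio condition on eigenvalue differences, \cite[Theorem 3.2]{god25}). We therefore exhibit two eigenvalues $\theta_j,\theta_{j'}$ of the unperturbed cycle that remain eigenvalues of $Q_\rho$ with eigenvector $\z_j$ and lie in the support of the pair state. We then show that these two are incompatible with periodicity, either because one of them is not a quadratic integer, or because their difference is too small (less than $1$). Eigenvalues of an integer matrix in the support of a periodic real state must form a set whose differences have rational ratios, and the elements must be integers or of the form $(a+b_i\sqrt{\Delta})/2$. Equivalently, by \cite[Theorem 3.2]{god25} each eigenvalue in the support must be a quadratic integer.

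Concretely, compute $(\e_k-\e_l)^T\z_j$ for general $j$ using \eqref{E5}/\eqref{E6}, exactly as in the displayed formula in the proof of Lemma \ref{L5} with $\pi/n$ replaced by $j\pi/n$.

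For $b=\frac n2$ (so $\w^T\v_j=0=\w^T\v_{n-j}$ when $j$ is odd, using \eqref{E5}), the entry is proportional to $\sin\big(\frac{j(k+l)\pi}{n}-\frac\pi4\big)\sin\big(\frac{j(k-l)\pi}{n}\big)$. With $k+l\equiv n/4$, the first factor is $\sin\big(\frac{(j-1)\pi}{4}\big)$, which is nonzero for $j=3$ (value $1$) and for $j=7$. I will pick some $j$ with $\gcd(j,n)$ small and $j\le 7$ so that $\theta_j=2+2\cos(2j\pi/n)$ has degree $\ge 3$; this is where $n\geq16$ enters. The second factor $\sin(j(k-l)\pi/n)$ might vanish for one chosen $j$, so I will take two candidates, such as $j=3$ and $j=5$. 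At least one has a nonzero second factor unless $n$ divides both $3(k-l)$ and $5(k-l)$, which would force $n\mid (k-l)$, impossible. Here $\cos(2j\pi/n)$ has degree $\varphi(n/\gcd(j,n))/2>1$ for $n\ge 16$.

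For $b\neq\frac n2$, use \eqref{E6} with $\zeta=1$. The factor becomes $\cos(bj\pi/n)\cos\big(\frac{j(k+l-b)\pi}{n}\big)\sin\big(\frac{j(k-l)\pi}{n}\big)$, and $k+l-b\equiv n/2$ gives $\cos(j\pi/2)$, which is nonzero for even $j$. So take $j\in\{2,4,6,\dots\}$ and argue the remaining two factors cannot all vanish for two or three choices of $j$. The factor $\cos(bj\pi/n)$ vanishes only if $n\mid 2bj-n$ in a specific way, and the factor $\sin(j(k-l)\pi/n)$ has the same divisibility issue. We also need $\theta_j$ with $j$ even not quadratic, i.e.\ $\varphi(n/\gcd(j,n))>4$ for $n/\gcd(j,n)$ with $j\le 6$ or so; this needs $n\ge 22$, matching the hypothesis. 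There is also the degenerate case where $\w^T\v_j=\w^T\v_{n-j}=0$, for which \eqref{E5} is used instead; I would handle it separately in the same way.

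The main obstacle is the bookkeeping in the case $b\neq\frac n2$. We must choose a small even $j$ so that simultaneously $\cos(bj\pi/n)\ne0$, $\sin(j(k-l)\pi/n)\neq0$, and $\theta_j$ is not a quadratic integer. I expect to take $j\in\{2,4,6\}$ and show by divisibility that some choice works once $n\ge 22$. If a direct choice fails, I will fall back on the argument of Theorem \ref{T4}: use two support eigenvalues $\theta_j,\theta_{j+2}$ and the bound $|\theta_{j+2}-\theta_j|<1$ to contradict periodicity via \cite[Theorem 3.2]{god25}.
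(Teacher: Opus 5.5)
\textbf{Case (ii).} Your strategy is to detect unperturbed eigenvalues $\theta_j$ in the support via $\z_j$, then use non-quadraticity or a gap $<1$. This cannot finish case (ii), and the obstruction is more than bookkeeping.

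First, take $b$ even and $\x=\frac{1}{\sqrt2}(\e_{b/2}-\e_{b/2+n/2})$. It passes Lemma~\ref{L5}, since $k+l=b+\frac n2$. But $k-l=\frac n2$ gives $\sin\big(j(k-l)\pi/n\big)=\sin(j\pi/2)=0$ for every even $j$. In the degenerate case $\cos(bj\pi/n)=0$, the projection onto $\operatorname{span}\{\v_j,\v_{n-j}\}$ also vanishes, because $n\mid j\cdot\frac n2$. For odd $j$ with $\cos(bj\pi/n)\neq 0$, the factor $\cos(j\pi/2)$ vanishes. The reason is structural: both vertices are fixed by the reflection $\sigma:x\mapsto b-x$, which is an automorphism of $C_n(\rho,b)$. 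Every non-degenerate $\z_j$ is $\sigma$-antisymmetric, so $\x$ is orthogonal to all of them. Hence no choice of $j$ works, and your fallback fails too. That fallback is the paper's $\theta_2,\theta_4$ argument, and it has the same blind spot. The missing idea is symmetry: $P_\sigma$ commutes with $U_{Q_\rho}(t)$, so $U(\tau)\x=\gamma\y$ forces $P_\sigma\y=\y$. The only $\sigma$-invariant pair states are $\pm\x$, so $\x$ has PST to no other pair state.

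Second, your primary criterion breaks at $n=24$. Every even-$j$ eigenvalue there is $2+2\cos(m\pi/6)\in\mathbb{Z}[\sqrt3]$, so none is non-quadratic. The threshold $22$ comes from $4\sin(6\pi/n)\sin(2\pi/n)<1$ (which equals $1$ at $n=20$), not from degrees. Worse, consider $C_{24}(\rho,2)$ with $\x=\frac{1}{\sqrt2}(\e_{10}-\e_4)$, which Lemma~\ref{L5} allows. One checks that $\z_4^T\x=\z_8^T\x=0$ and that all odd $j$ vanish. So the only $\theta_j$ your test places in the support are $\theta_2,\theta_6,\theta_{10}=2+\sqrt3,\,2,\,2-\sqrt3$. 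These are mutually compatible and pairwise at least $\sqrt3$ apart. Excluding this state needs information about the perturbed (symmetric-sector) eigenvalues, which neither your plan nor the gap argument supplies.

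\textbf{Case (i).} Your concrete choice is wrong. With $k+l\equiv\frac n4$, the first factor is $\pm\sin\frac{(j-1)\pi}{4}$, which vanishes at $j=5$. You would need $j\equiv 3\pmod 4$, for example $\{3,7\}$. Also, ``degree $>1$'' must be ``degree $\ge 3$'', and that fails for $n=24$, $j=3$, where $\theta_3=2+\sqrt2$.

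The detour is unnecessary. For $b=\frac n2$ and odd $j$, both $\w^T\v_j$ and $\w^T\v_{n-j}$ vanish. So the whole plane $\operatorname{span}\{\v_j,\v_{n-j}\}$ lies in the $\theta_j$-eigenspace of $Q_\rho$, and the support should be tested against that plane, not the single vector $\z_j$. The projection of $\e_k-\e_l$ onto it vanishes only if $n\mid j(k-l)$. For $j=1$ this never happens, so $\theta_1$ lies in the support of every pair state. Since $\theta_1$ is not a quadratic integer for $n\ge 13$, case (i) follows immediately. In fact no pair state survives Lemma~\ref{L5} when $b=\frac n2$.

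This is also more robust than the paper's route for (i). The paper applies Corollary~\ref{C1} to a half graph, which only covers pair states of the form $\e_u-\e_{\phi(u)}$. Lemma~\ref{L5} leaves states with $k+l\equiv\frac n4$, which are generally not of that form.
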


\begin{proof}
If $b=\frac{n}{2},$ then $C_n (\rho, b)$ has an involution $\phi$ that maps vertex $0$ to $b$. By Lemma \ref{L5}, if $C_n (\rho, b)$ admits pair PST, then $n \equiv 0 \pmod{4}$ and $k + l = \frac{n}{4}$  or $ k + l = \frac{5n}{4}$. The half graph induced by $\phi$ is a path on $\frac{n}{2}$ vertices. When $q=-1,$ the matrix $\mathscr{L}_-$ becomes $2I-A(P_{\frac{n}{2}})$. By \cite[Theorem 9]{cou2024}, there is no vertex PST relative to $A(P_{\frac{n}{2}})$ when $n\geq 8$. Hence, pair PST does not occur in $C_n (\rho, b)$ when $n \geq 16$ by Corollary \ref{C1}(1a). Now, if $b \neq \frac{n}{2},$ then \eqref{E5} and \eqref{E6} imply that $\theta_2$ and $\theta_4$ belong to the support of $\frac{1}{\sqrt{2}}(\e_k-\e_l)$, where $k+l =b+\frac{n}{2}$ or $b+\frac{3n}{2}.$ Then $|\theta_4-\theta_2| = 4\left|\sin\ob{\frac{6\pi}{n}} \sin\ob{\frac{2\pi}{n}} \right|
	\leq \frac{48\pi^2}{n^2}.$ Since $|\theta_4-\theta_2| <1$ when $n \geq 22$, \cite[Corollary 3.4]{kim} yields no pair PST in $C_n (\rho, b)$.
\end{proof}

\begin{figure}
\centering
	\begin{minipage}[t]{0.3\textwidth}
		\centering
		\begin{tikzpicture}[scale=0.32]
			\node[circle,thick,draw,inner sep=2pt] (A) at (1,1) {};
			\node[circle,thick,draw,inner sep=1.3pt] (B) at (0.2,-2) {\scriptsize $a$};
			\node[circle,thick,draw,inner sep=1pt] (C) at (2,-4.5) {\scriptsize $b$};
			\node[circle,thick,draw,inner sep=1.5pt] (D) at (5,-4.5) {\scriptsize $c$};
			\node[circle,thick,draw,inner sep=1pt] (E) at (6.8,-2) {\scriptsize $d$};
			\node[circle,thick,draw,inner sep=2pt] (F) at (6,1) {};

			\draw[thick] (A) -- (B);
			\draw[thick] (B) -- (C);
			\draw[thick] (C) -- (D);
			\draw[thick] (D) -- (E);
			\draw[thick] (E) -- (F);
			
			\draw[thick] (A) -- (E);
			\draw[thick] (F) -- (B);
			\draw[dotted, line cap=round, line width=0.8pt] (A) .. controls (2.5,2.7) and (4.5,2.7) .. (F);
			
			\node at (3.5,-7) {\scriptsize$(i)$};
		\end{tikzpicture}
	\end{minipage}
\hfill
	\begin{minipage}[t]{0.3\textwidth}
		\centering
		\begin{tikzpicture}[scale=0.25]
			\node[circle,thick,draw,inner sep=1.3pt] (A) at (0,1) {\scriptsize $a$};
			\node[circle,thick,draw,inner sep=1pt] (B) at (0,-2) {\scriptsize $b$};
			\node[circle,thick,draw,inner sep=2pt] (C) at (2,-4.5) {};
			\node[circle,thick,draw,inner sep=2pt] (D) at (7,-4.5) {};
			\node[circle,thick,draw,inner sep=1.5pt] (E) at (9,-2) {\scriptsize $c$};
			\node[circle,thick,draw,inner sep=1pt] (F) at (9,1) {\scriptsize $d$};
			\node[circle,thick,draw,inner sep=2pt] (G) at (2,3.5) {};
			\node[circle,thick,draw,inner sep=2pt] (H) at (7,3.5) {};
			
			\draw[thick] (A) -- (B);
			\draw[thick] (B) -- (C);
			\draw[thick] (A) -- (H);
			\draw[thick] (D) -- (E);
			\draw[thick] (E) -- (F);
			\draw[thick] (F) -- (H);
			\draw[thick] (A) -- (G);
			\draw[thick] (F) -- (G);
			\draw[thick] (B) -- (D);
			\draw[thick] (C) -- (E);
			
			\draw[dotted, line cap=round, line width=0.8pt] 
			(G) .. controls (4,4.8) and (5,4.8) .. (H);
			\draw[dotted, line cap=round, line width=0.8pt] 
			(C) .. controls (4,-5.8) and (5,-5.8) .. (D);
			
			\node at (4.5,-8) {\scriptsize$(ii)$};
		\end{tikzpicture}
	\end{minipage}
    \hfill
	\begin{minipage}[t]{0.35\textwidth}
		\centering
		\begin{tikzpicture}[scale=0.3]
			\node[circle,thick,draw,inner sep=1.3pt] (A) at (1,0.5) {\scriptsize $a$};
			\node[circle,thick,draw,inner sep=1pt] (B) at (0,-2) {\scriptsize $b$};
			\node[circle,thick,draw,inner sep=2pt] (C) at (1.5,-4.5) {};
			\node[circle,thick,draw,inner sep=2pt] (D) at (5.5,-4.5) {};
			\node[circle,thick,draw,inner sep=1.5pt] (E) at (7,-2) {\scriptsize $c$};
			\node[circle,thick,draw,inner sep=1pt] (F) at (6,0.5) {\scriptsize $d$};
			\node[circle,thick,draw,inner sep=2pt] (G) at (3.5,2) {};

			\draw[thick] (A) -- (B);
			\draw[thick] (B) -- (D);
			\draw[thick] (C) -- (E);
			\draw[thick] (D) -- (E);
			\draw[thick] (E) -- (F);
			\draw[thick] (F) -- (G);
			\draw[thick] (A) -- (G);
			\draw[thick] (C) -- (B);
			\draw[dotted, line cap=round, line width=0.8pt] (C) .. controls (3,-5.5) and (4,-5.5) .. (D);
			
			\draw[thick] (A) to[out=160, in=100, looseness=15] node[above] {\small 1} (A);
			\draw[thick] (F) to[out=80, in=20, looseness=15] node[above] {\small 1} (F);
			
			\node at (3.5,-7) {\scriptsize$(iii)$};
			\end{tikzpicture}
	\end{minipage}
    \vspace{-0.2in}
    
	\caption{Pair PST on cycles with loops and additional edges.}
\label{f4}
\end{figure}
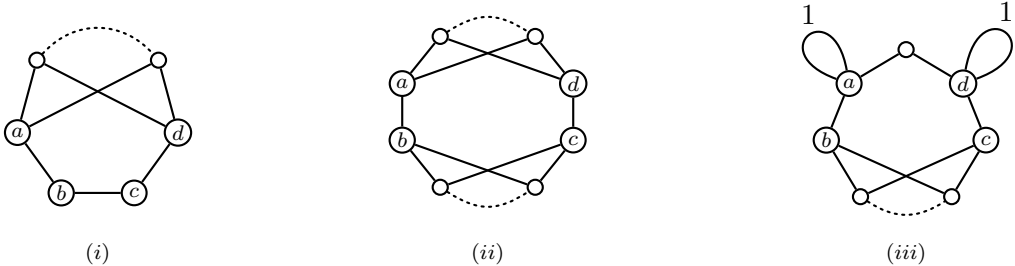 
We now show that pair PST can be induced in $C_n $ by suitably inserting additional edges and loops. In particular, Corollary \ref{C1}(1a) implies that the graph in Figure \ref{f4}$(i)$ admits Laplacian pair PST between $\frac{1}{\sqrt{2}}(\e_a-\e_d)$ and $\frac{1}{\sqrt{2}}(\e_b-\e_c)$ at time $\frac{\pi}{2}.$ More generally, in Figure \ref{f4}$(i)$, if loops $\alpha$ and $\beta$ are assigned to the vertices $a,d$ and $b,c,$ respectively, with $\beta-\alpha=1+q$, then pair PST between these pair states can be achieved relative to $\mathscr{L}$ at time $\frac{\pi}{2q}$. Similarly, the graphs in Figure \ref{f4}$(ii)$ and Figure \ref{f4}$(iii)$ exhibit pair PST between $\frac{1}{\sqrt{2}}(\e_a-\e_d)$ and $\frac{1}{\sqrt{2}}(\e_b-\e_c)$ at time $\frac{\pi}{2q}$ relative to  $\mathscr{L}$. The following are immediate.

\begin{thm}
\label{9}
Laplacian pair PST can be achieved in a cycle with at least six vertices by inserting exactly two suitable edges.
\end{thm}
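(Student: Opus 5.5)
Our plan is to generalize the construction in Figure \ref{f4}$(i)$ to every $C_n$ with $n\ge 6$ and then apply Corollary \ref{C1}(1a) with $q=-1$. Label the cycle so that four consecutive vertices form the path $a\sim b\sim c\sim d$. The two remaining neighbours are $x\sim a$ and $y\sim d$, which are distinct because $n\ge 6$. The rest of the cycle is a path $P$ from $x$ to $y$. The two inserted edges will be $xd$ and $ya$.

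First we check that the reflection of the cycle through the midpoint of the edge $bc$ is an involution of the new graph $X$. This reflection swaps $a\leftrightarrow d$, $b\leftrightarrow c$ and $x\leftrightarrow y$, and it maps the path $P$ onto itself. It sends the edge $xd$ to $ya$, so it preserves $E(X)$. When $n$ is even it has no fixed vertices. When $n$ is odd it fixes exactly the middle vertex of $P$, which has degree $2$.

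Next we choose the half graph $G$ to contain $b$ and $a$ from the orbits $\{b,c\}$ and $\{a,d\}$. The degrees in $X$ are $\deg b=2$ and $\deg a=3$, since $a$ is adjacent to $b$, $x$ and $y$. In $\mathscr{L}_-=\mathscr{L}_G-qA_\phi$, the entry of $A_\phi$ indexed by $(b,b)$ records the edge $bc$, and the one indexed by $(a,a)$ records the edge between $a$ and $\phi(a)=d$, which does not exist. The entry indexed by $(a,x)$ records the edge $a\,\phi(x)=ay$, which does exist. Hence, with $q=-1$, we get $(\mathscr{L}_-)_{bb}=2-(-1)=3$, $(\mathscr{L}_-)_{aa}=3$ and $(\mathscr{L}_-)_{ab}=-1$. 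The remaining row entry of $a$ is $-1+1=0$, coming from the entry $-1$ for the edge $ax$ and the entry $+1$ for the edge $ay$.

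It follows that the $\{a,b\}$ block of $\mathscr{L}_-$ decouples from the rest, and equals $3I-A(P_2)$. We must also confirm that $b$ has no other neighbours in $G$: its only neighbours in $X$ are $a$ and $c$, and $c$ lies outside $T$. This block has vertex PST between $a$ and $b$ at time $\pi/2$. By Corollary \ref{C1}(1a), $X$ then has Laplacian pair PST between $\frac{1}{\sqrt{2}}(\e_a-\e_d)$ and $\frac{1}{\sqrt{2}}(\e_b-\e_c)$ at $\pi/2$.

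The main obstacle is the bookkeeping that makes $\mathscr{L}_-$ decouple. The cancellation $-1+1$ in row $a$ needs the reflection to send $x$ to $y$ and $x$ to lie in $T$. For $n=6$, where $x\sim y$, we also need to check that the edge $xy$ only adds to entries indexed by $x$ and does not touch the $\{a,b\}$ block. It does not, so the cases $n=6$ and odd $n$ go through the same way.
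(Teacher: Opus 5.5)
Your proposal is correct and is essentially the paper's argument. Theorem \ref{9} is read off from the graph in Figure \ref{f4}$(i)$, whose two inserted edges join $a$ to the outer neighbour of $d$ and $d$ to the outer neighbour of $a$, together with Corollary \ref{C1}(1a) at $q=-1$. Your check that $\mathscr{L}_-$ splits off the block $3I-A(P_2)$ on $\{a,b\}$, via the cancellation in row $a$, is exactly the verification the paper leaves implicit, and your separate handling of $n=6$ and odd $n$ is also correct.
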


\begin{thm}
\label{10}
Pair PST relative to the generalized Laplacian matrix can be achieved in a cycle with at least eight vertices by inserting exactly four suitable edges (including loops).
\end{thm}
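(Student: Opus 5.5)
The plan is to prove the theorem by an explicit construction, modelled on Figure~\ref{f4}$(ii)$ and $(iii)$, followed by a single application of Corollary~\ref{C1}(1a). Label the cycle $C_n$ ($n\geq 8$) by $\mathbb{Z}_n$ and take the reflection $\phi$ as the involution. All inserted edges (chords and loops) will be placed in $\phi$-orbits, so $\phi$ stays an automorphism of the modified graph $X$. Choose the four vertices $a,b,c,d$ as in the figure: $a,b$ adjacent on the cycle, $d=\phi(a)$, $c=\phi(b)$. Fix a half graph $T$ containing $a$ and $b$.

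The goal is to arrange the inserted edges so that the matrix $\mathscr{L}_-=\mathscr{L}_G-qA_\phi$ of Lemma~\ref{L1} has the following form:
\[
\mathscr{L}_-=\begin{bmatrix} \kappa I_2+qA(P_2) & O\\ O & R\end{bmatrix},
\]
where the first block is indexed by $\{a,b\}$ and $\kappa$ is a constant. Three conditions must hold.
\begin{enumerate}
\item \emph{Isolation in $T$.} In the half graph, $a$ and $b$ have no neighbours other than each other. Every other neighbour of $a$ or $b$ in $X$ must therefore be a fixed vertex of $\phi$ or lie in $V(X)\setminus T$.
\item \emph{No $A_\phi$ coupling.} There is no $A_\phi$ entry coupling $\{a,b\}$ to the rest of $T$.
\item \emph{Equal diagonals.} The diagonal entries of $\mathscr{L}_-$ at $a$ and $b$ agree. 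Each entry is the degree in $X$ (loops included) minus $q$ times the indicator that the vertex is adjacent to its own $\phi$-image.
\end{enumerate}
The two inserted chords will be the symmetric pair needed to reroute the cycle, so that the cycle-neighbour of $b$ (respectively $c$) other than $a$ (respectively $d$) falls outside $T$; this secures condition~1. In Figure~\ref{f4}$(ii)$ and $(iii)$ this is done by the crossing chords at the bottom. The two loops of weight one at $a$ and $d$ then restore condition~3 when $b$ has acquired degree three. When no loop is needed, as in $(ii)$, the degrees already match.

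Once this block form is verified, the argument closes quickly. Vertex PST between the ends of $P_2$ relative to $\kappa I+qA(P_2)$ occurs at $\frac{\pi}{2q}$, as already used several times in Section~\ref{sec:inf}. Since $\{a,b\}$ is a direct summand of $\mathscr{L}_-$, the same PST holds in $\mathscr{L}_-$. Corollary~\ref{C1}(1a) then transfers it to pair PST between $\frac{1}{\sqrt{2}}(\e_a-\e_d)$ and $\frac{1}{\sqrt{2}}(\e_b-\e_c)$ relative to $\mathscr{L}(X)$ at time $\frac{\pi}{2q}$. This holds for every nonzero $q$, because $q$ enters only through the factor $q$ in the off-diagonal block.

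The main obstacle is condition~1 together with the count of exactly four inserted edges for every $n\geq 8$. The path segment between the chord endpoints has variable length $n-O(1)$, shown dotted in the figure. I have to check that, however long this segment is, the choice of $T$ (one vertex from each $\phi$-orbit) can always be made so that no vertex of that segment in $T$ is adjacent to $b$. I must also check that the chord endpoints form $\phi$-orbits compatible with $c=\phi(b)$ in both parity cases of $n$, that is, whether the reflection fixes two vertices or none. My first attempt will be to place the chords $b\sim\phi(x)$ and $c\sim x$, where $x$ is the cycle-neighbour of $b$ other than $a$. This makes both $x$ and $\phi(x)$ adjacent to $b$. I will then pick $\phi(x)$, rather than $x$, into $T$, and verify directly, by the same bookkeeping as in the proof of Theorem~\ref{planar}, that the remaining entries of $\mathscr{L}_-$ decouple from $\{a,b\}$.
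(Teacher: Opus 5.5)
Your overall route is the paper's: realise Figure~\ref{f4}(ii) or (iii), show that $\mathscr{L}_-$ splits off a block $\kappa I+qA(P_2)$ on $\{a,b\}$, and invoke Corollary~\ref{C1}(1a). The gap is in the criterion you set up for that splitting.

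\textbf{Why conditions 1 and 2 cannot be met.} Taken together, conditions 1 and 2 say that every neighbour of $a$ or $b$ lies in $S\cup\{a,b,c,d\}$. A neighbour in $T\setminus\{a,b\}$ breaks condition 1. A neighbour $\phi(v)$ with $v\in T\setminus\{a,b\}$ gives $(A_\phi)_{uv}=1$ and breaks condition 2. For a reflection of $C_n$ with $n\geq 7$, this already fails on the cycle edges: the outer cycle-neighbours of $a$ and $b$ cannot both be fixed or lie in $\{c,d\}$. Inserting chords only adds neighbours, so nothing gets ``rerouted''.

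Your own chord $b\sim\phi(x)$ makes $b$ adjacent to both $x$ and $\phi(x)$. Whichever of the two lies in $T$ violates condition 1, and the other violates condition 2. Putting $\phi(x)$ rather than $x$ into $T$ cannot repair this. Changing an orbit representative only negates the basis vector $\frac{1}{\sqrt{2}}(\e_v-\e_{\phi(v)})$, i.e.\ it conjugates $\mathscr{L}_-$ by a diagonal $\pm1$ matrix. So the moduli of the entries of $\mathscr{L}_-$ do not depend on the choice of $T$. Checked against your stated conditions, the right construction would be rejected.

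\textbf{The missing idea: cancellation.} The decoupling comes from cancellation between $\mathscr{L}_G$ and $qA_\phi$. For distinct $u,v\in T$ one has $(\mathscr{L}_-)_{uv}=q\,(A_{uv}-A_{u\phi(v)})$. Hence $\{a,b\}$ decouples exactly when, for every $v\in T\setminus\{a,b\}$, each of $a,b$ is adjacent to both or to neither of $v,\phi(v)$. This is why the chords $b\sim\phi(x)$ and $c\sim x$ work: they kill the $b$--$x$ entry precisely because $b$ sees both $x$ and $\phi(x)$.

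\textbf{The outer neighbour of $a$.} Your proposal leaves open how to handle the outer cycle-neighbour $y$ of $a$. There are two options.
\begin{itemize}
\item Choose the reflection axis through $y$, so that $y\in S$. Then $a$ has degree $2$ and $b$ has degree $3$, and loops of weight one at $a$ and $d$ equalise the diagonals. This is Figure~\ref{f4}(iii), and it works for both parities of $n$.
\item Add the chords $a\sim\phi(y)$ and $d\sim y$ instead of loops. This is Figure~\ref{f4}(ii).
\end{itemize}
In both cases $A_{ad}=A_{bc}=0$, so $\mathscr{L}_-$ restricted to $\{a,b\}$ is $3I+qA(P_2)$ with no coupling to the rest. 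Corollary~\ref{C1}(1a) then gives pair PST at $\frac{\pi}{2q}$ with exactly four inserted edges.
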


It can be checked using Corollary 1 in \cite{sarojini3} that any simple unweighted graph constructed from a cycle by inserting two edges similar to that of Figure \ref{f4}(i) has no pair PST relative to $A$. This yields an infinite family of simple unweighted graphs having pair PST relative to $L$ but not relative to $A$. Similarly, any unweighted graph with loops constructed from a cycle by inserting two edges and two loops similar to that of Figure \ref{f4}(iii) has no pair PST relative to the $A$, giving an infinite family of unweighted graphs with loops having pair PST relative to the generalized Laplacian matrix for all $q\neq 0$ but not relative to $A$.

\section{Paths}
\label{sec5}
Let $P_n$ be a path with $V(P_n)=\{1,\ldots,n\}$ such that vertices $j$ and $k$ are adjacent if and only if $|j-k|=1.$ Let $P_n(\omega)$ be the path with loops of weight $\omega$ at the end vertices. Then
$$\mathscr{L}(P_n(\omega))=2I+(\omega -1)(\e_1\e_1^T+\e_n\e_n^T)+qA(P_n).$$
Note that $A(P_n)+\frac{1}{q}(\omega-1)(\e_1\e_1^T+\e_n\e_n^T)$ represents the adjacency matrix of $P_n\ob{\frac{1}{q}(\omega-1)}$. The quantum walk on $P_n(\omega)$ relative to $\mathscr{L}$ is equivalent to that on $P_n\ob{\frac{1}{q}(\omega-1)}$ relative to $A$. Thus, we have the following observation.

\begin{thm}\label{T8}
$P_n(\omega)$ admits vertex/pair/plus PST (resp., PGST) relative to $\mathscr{L}$ if and only if $P_n\ob{\frac{1}{q}(\omega-1)}$ admits vertex/pair/plus PST (resp., PGST) relative to $A$.
\end{thm}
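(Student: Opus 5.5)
The plan is to write the generalized Laplacian of $P_n(\omega)$ as a shifted and rescaled adjacency matrix, and then use the fact that shifting and rescaling a Hamiltonian does not change which states admit PST or PGST.

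First I would verify the identity
\[
\mathscr{L}(P_n(\omega))=2I+(\omega-1)(\e_1\e_1^T+\e_n\e_n^T)+qA(P_n).
\]
The degree of each interior vertex is $2$. Each end vertex has degree $1+\omega$, counting the loop in the degree. The loop also contributes $q\omega$ to the diagonal through $qA$ at vertices $1$ and $n$, so this convention needs care. I would check it against the paper's conventions: $\Delta$ is the row-sum degree and $A(P_n)$ is the simple-graph adjacency. Consistent with the displayed formula, the diagonal at the end vertices is $1+\omega$. Factoring out $q$ then gives
\[
\mathscr{L}(P_n(\omega)) = 2I + q\,A\big(P_n(\tfrac{1}{q}(\omega-1))\big),
\]
where $P_n(\beta)$ has loops of weight $\beta$ at the end vertices, so its adjacency matrix is $A(P_n)+\beta(\e_1\e_1^T+\e_n\e_n^T)$.

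Next I would compute the transition matrices. Since $2I$ commutes with everything,
\[
U_{\mathscr{L}}(t)=e^{2it}\,U_{A'}(qt), \qquad A'=A\big(P_n(\tfrac{1}{q}(\omega-1))\big).
\]
Hence $U_{\mathscr{L}}(\tau)\x=\gamma\y$ holds if and only if $U_{A'}(q\tau)\x=\gamma e^{-2i\tau}\y$. The new phase still has modulus one whenever $\gamma$ does. This gives PST in both directions, with times related by $\tau\leftrightarrow q\tau$, and because $q\neq 0$ the map $t\mapsto qt$ is a bijection of $\mathbb{R}$.

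For PGST, suppose $U_{\mathscr{L}}(\tau_k)\x\to\gamma\y$. Then $U_{A'}(q\tau_k)\x=e^{-2i\tau_k}U_{\mathscr{L}}(\tau_k)\x$. The phases $e^{-2i\tau_k}$ lie on the unit circle, so I would pass to a subsequence along which they converge to some $\delta$. The limit is then $\delta\gamma\y$, which again has a unimodular coefficient. The converse direction is symmetric.

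The statement applies to arbitrary states, so the vertex, pair and plus cases all follow at once. The only step needing real attention is matching the loop and degree convention in the displayed formula. The subsequence argument for the phase in the PGST case is the minor technical point.
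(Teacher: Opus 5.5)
Your proposal is correct and follows the same route as the paper, which simply observes that $\mathscr{L}(P_n(\omega))=2I+q\,A\bigl(P_n(\tfrac{1}{q}(\omega-1))\bigr)$ (with the loop counted only in the degree, as you noted) and declares the two quantum walks equivalent. Your identity $U_{\mathscr{L}}(t)=e^{2it}U_{A'}(qt)$ and the subsequence argument that absorbs the phase $e^{-2i\tau_k}$ in the PGST case supply the details the paper leaves implicit.
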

It is observed in \cite[Theorem 3.4 and Theorem 3.8]{kempton} that the simple path $P_n$ equipped with symmetric loops on $n \geq 4$ vertices does not admit vertex PST relative to $A$ between its end vertices. For $n=2,$ the graph $P_2\ob{\frac{-1}{q}}$ admits PST relative to $A$ between its end vertices. In the case $n=3,$ by employing similar arguments used in the proof of \cite[Theorem 3.3]{kempton2}, we get that $P_3\ob{\frac{-1}{q}}$ admits PST relative to $A$ between its end vertices. From these observations, the following result is immediate.

\begin{thm}\label{T9}
$P_n$ admits vertex PST relative to $\mathscr{L}$ between end vertices if and only if $n =2,3$. For $n = 2$, PST occurs at $\frac{\pi}{2q}$ for all $q\neq 0$. For $n = 3$, PST occurs if and only if $q = \sqrt{\frac{k^2 - l^2}{8l^2}}$, where $k>l$ are integers with $k \not\equiv l \pmod{2}$, with PST time $ 2\pi l.$
\end{thm}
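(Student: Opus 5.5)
The plan is to reduce to the adjacency results via Theorem \ref{T8} and then treat $n=2$ and $n=3$ by explicit spectral computation. A simple path $P_n$ has no loops, so it is $P_n(\omega)$ with $\omega=0$. By Theorem \ref{T8}, vertex PST between the end vertices of $P_n$ relative to $\mathscr{L}$ is therefore equivalent to vertex PST between the end vertices of $P_n\ob{-\frac{1}{q}}$ relative to $A$. For $n\geq 4$, I would invoke \cite[Theorem 3.4 and Theorem 3.8]{kempton}: a path on at least four vertices with symmetric loops at its ends has no end-to-end vertex PST relative to $A$. This rules out $n\ge4$ and leaves only $n=2,3$.

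For $n=2$, we have $\mathscr{L}=I+qA(P_2)$, so
\[U_\mathscr{L}(t)=e^{it}\ob{\cos(qt)I+i\sin(qt)A(P_2)}.\]
This gives PST between the end vertices at $t=\frac{\pi}{2q}$ for every $q\neq0$.

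For $n=3$, I would compute the spectrum of $\mathscr{L}=\mathrm{diag}(1,2,1)+qA(P_3)$ directly, using the involution $(1~3)$ as in Lemma \ref{L1}.
\begin{itemize}
\item The antisymmetric vector $\frac{1}{\sqrt2}(\e_1-\e_3)$ is an eigenvector with eigenvalue $1$.
\item On the symmetric subspace spanned by $\frac{1}{\sqrt2}(\e_1+\e_3)$ and $\e_2$, the matrix is $\begin{bmatrix}1&\sqrt2 q\\ \sqrt2 q&2\end{bmatrix}$. Its eigenvalues are $\lambda_\pm=\frac{3\pm s}{2}$ with $s=\sqrt{1+8q^2}>1$.
\end{itemize}
Both symmetric eigenvectors have nonzero first coordinate, since $q\neq0$. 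Hence $\e_1$ has full support $\{1,\lambda_+,\lambda_-\}$, and $\e_1,\e_3$ are strongly cospectral, with sign $-1$ on the eigenvalue $1$ and $+1$ on $\lambda_\pm$.

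PST from $\e_1$ to $\e_3$ at time $t$ is then equivalent to $e^{it\lambda_+}=e^{it\lambda_-}=-e^{it}$. That is, $t\frac{1+s}{2}$ and $t\frac{1-s}{2}$ must both be odd multiples of $\pi$. Adding and subtracting these, $t=\pi m$ and $ts=\pi m'$ for integers $m,m'$, so $s$ is rational.

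Write $s=\frac{k}{l}$ in lowest terms with $k>l\ge1$. Then $8q^2=s^2-1$ gives $q^2=\frac{k^2-l^2}{8l^2}$. The remaining condition is that some $t$ makes $t\frac{l+k}{2l}$ and $t\frac{l-k}{2l}$ both odd multiples of $\pi$. Forcing $t=\pi l r$ with $r\in\mathbb{Z}$, we need $r\frac{l+k}{2}$ and $r\frac{l-k}{2}$ to be odd integers. I expect this parity analysis to be the main step.
\begin{itemize}
\item If $k\not\equiv l\pmod 2$, then $l\pm k$ are odd, and $r=2$ works. The minimal time is $2\pi l$, up to the normalisation/sign convention for $q$.
\item If $k\equiv l\pmod 2$, then both are odd by coprimality. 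The halves $\frac{l+k}{2}$ and $\frac{l-k}{2}$ sum to the odd number $l$, so one of them is even. Any integer multiple of the even one is even, so no $r$ works.
\end{itemize}
This yields exactly the stated family of $q$ with PST time $2\pi l$, taking $q>0$ as in the statement.
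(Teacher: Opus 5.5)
Your proposal is correct and follows the paper's route. Like the paper, you reduce via Theorem~\ref{T8} to $P_n(-\frac{1}{q})$ relative to $A$, cite Kempton et al.\ for $n\ge 4$, and treat $n=2,3$ directly; your explicit eigenvalue computation, strong-cospectrality check and parity argument for $n=3$ spell out what the paper leaves to the argument of Kempton et al.

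Because you compute with $\mathscr{L}$ itself, the time $2\pi l$ needs no rescaling, so your hedge about normalisation is unnecessary. Your observation that only $q^2$ enters is also right: negative $q$ with the same $q^2$ gives PST as well, which the statement's $q=\sqrt{\cdot}$ glosses over.
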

In \cite[Theorem 3]{kempton}, it is observed that $P_n(\omega)$ admits vertex PGST relative to $A$ between the end vertices with a suitable choice of weight $\omega$ of loops added at the end vertices. From Theorem \ref{T8}, we have the following observation.

\begin{thm}
\label{omega}
For each $n\geq 3$, there exists $q \in \mathbb{R}$ such that $P_n$ admits vertex PGST relative to $\mathscr{L}=\Delta+qA(P_n)$ between the end vertices.
\end{thm}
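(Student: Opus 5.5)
The plan is to reduce the claim to the adjacency matrix through Theorem \ref{T8} and then invoke the PGST result for paths with symmetric end loops. For the unlooped path $P_n$, each end vertex has degree $1$ and each internal vertex has degree $2$. So $\mathscr{L}(P_n)=\Delta+qA(P_n)=2I-(\e_1\e_1^T+\e_n\e_n^T)+qA(P_n)$, which is $\mathscr{L}(P_n(\omega))$ with $\omega=0$ in the notation of Section \ref{sec5}. Rewriting this as $2I+q\bigl(A(P_n)-\frac{1}{q}(\e_1\e_1^T+\e_n\e_n^T)\bigr)$ gives
\[
U_{\mathscr{L}}(t)=e^{2it}\,U_{A(P_n(-1/q))}(qt).
\]
Hence vertex PGST between $1$ and $n$ relative to $\mathscr{L}$ holds if and only if it holds relative to $A$ in $P_n(\gamma)$ with $\gamma=-1/q$. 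Sequences of times transform via $t\mapsto qt$, and the phase $e^{2it}$ is harmless because PGST permits a unimodular factor. This is exactly Theorem \ref{T8} applied with $\omega=0$.

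It then suffices to find, for each $n\ge 3$, a real loop weight $\gamma\neq 0$ such that $P_n(\gamma)$ has adjacency PGST between its end vertices, and to set $q=-1/\gamma$. I would take $\gamma$ from \cite[Theorem 3]{kempton}. I expect the weights produced there (for example, transcendental weights, or weights chosen so that the relevant eigenvalues satisfy the Kronecker-type independence conditions) to form a nonempty set of nonzero reals. If that theorem yields a dense or infinite family of admissible $\gamma$, I can choose one with $\gamma\ne 0$, so that $q=-1/\gamma$ is a legitimate nonzero parameter.

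The main point to check is compatibility of hypotheses. The cited theorem must allow arbitrary real (possibly negative) loop weights, or at least produce some nonzero admissible weight, since $q$ may have either sign. Its end-vertex PGST statement must also be in the sense used here, with $\gamma$ unimodular. If \cite[Theorem 3]{kempton} only gives positive weights, that is still fine: a positive $\gamma$ gives a negative $q=-1/\gamma\in\mathbb{R}\setminus\{0\}$, and the statement only asks for existence of some real $q$. Finally, $n\ge3$ matters only to match the range of the cited result; for $n=2,3$ Theorem \ref{T9} already gives PST, which implies PGST.
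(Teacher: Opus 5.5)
Your proposal is correct and follows the paper's route. You apply Theorem \ref{T8} with $\omega=0$ to reduce to adjacency PGST in $P_n(-1/q)$, take a nonzero loop weight $\gamma$ from \cite[Theorem 3]{kempton}, and set $q=-1/\gamma$; this is more explicit than the paper's one-line appeal. The only detail worth making precise is that the varying phase $e^{2i\tau_k}$ is handled by passing to a subsequence along which it converges on the unit circle.
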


\begin{thm}\label{T10}
Let $P_n(\omega_1,\omega_2)$ denote a path on $n$ vertices with loops $\omega_1$ and $\omega_2$ assigned only at the end vertices. Then $P_n(\omega_1,\omega_2)$ does not exhibit vertex PST relative to $\mathscr{L}$ in the following cases: (i) $n \ge 2$ with $\omega_1=1$ and $\omega_2=1-q$, (ii) $n \geq 4$ with $\omega_1=\omega_2=1$, or (iii) $n \geq 5$ with $\omega_1=\omega_2=1-q.$
\end{thm}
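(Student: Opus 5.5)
The plan is to reduce every case to an adjacency problem on a path with loops at the ends, exactly as in Theorem~\ref{T8}. Since the end vertices of $P_n(\omega_1,\omega_2)$ have degree $1+\omega_i$ and the internal vertices have degree $2$,
\[
\mathscr{L}(P_n(\omega_1,\omega_2))=2I+q\Bigl(A(P_n)+\tfrac{\omega_1-1}{q}\e_1\e_1^T+\tfrac{\omega_2-1}{q}\e_n\e_n^T\Bigr).
\]
Adding $2I$ and rescaling by $q$ only multiplies $U_\mathscr{L}(t)$ by a phase and reparametrises time. So vertex PST relative to $\mathscr{L}$ is equivalent to vertex PST relative to the adjacency matrix of the path $P_n(a,b)$ with loops $a=\frac{\omega_1-1}{q}$ and $b=\frac{\omega_2-1}{q}$ at vertices $1$ and $n$. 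The three hypotheses give $(a,b)=(0,-1)$, $(0,0)$ and $(-1,-1)$ respectively, independently of $q$.

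Case (ii) is the simple unweighted path $P_n$ relative to $A$. It is classical that vertex PST in $P_n$ occurs only for $n\le 3$; this is also the loop-free case of the result of \cite{kempton} quoted before Theorem~\ref{T9}, and \cite[Theorem 9]{cou2024}. Case (iii) uses the identity $A(P_n)-\e_1\e_1^T-\e_n\e_n^T=Q(P_n)-2I$. Hence PST here is equivalent to PST relative to the signless Laplacian of the simple path. Because $P_n$ is bipartite, $Q(P_n)$ and $L(P_n)$ are similar via a diagonal $\pm1$ signature, so PST is equivalent to Laplacian PST in $P_n$. This is known to occur only for $n=2$, which settles $n\ge 5$ (indeed $n\ge3$). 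Alternatively, I could invoke the no-PST result for symmetric end loops from \cite{kempton} for end vertices and handle internal vertices as in case (i).

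Case (i) is the asymmetric one, with a loop of weight $-1$ at vertex $n$ only. Here I would compute the spectrum explicitly. Write $B=A(P_n)-\e_n\e_n^T$ and try eigenvectors $x_j=\sin(j\alpha)$ for $1\le j\le n$. The boundary condition at $n$ then forces $\sin((n+1)\alpha)=-\sin(n\alpha)$, which gives $\alpha=\frac{2k\pi}{2n+1}$ and eigenvalues $2\cos\frac{2k\pi}{2n+1}$ for $k=1,\dots,n$. The spectrum is simple since $B$ is an irreducible tridiagonal matrix.

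PST between $u$ and $v$ requires strong cospectrality, and with simple spectrum this means $|\sin(u\alpha_k)|=|\sin(v\alpha_k)|$ for all $k$. For end vertices, \cite[Lemma 2]{kay2} applies directly: PST between the end vertices forces $B$ to be persymmetric, and the diagonal $(0,\dots,0,-1)$ is not. For general $u<v$, I would use the above condition with $k=1$ and $k=2$ to derive $u+v\equiv 0$ or $u\equiv v$ modulo $2n+1$, which is impossible for $1\le u<v\le n$. For $n=2$ one checks directly that the eigenvalues $\frac{-1\pm\sqrt5}{2}$ violate the ratio condition.

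The main obstacle is this strong cospectrality computation for internal vertices in case (i). If the trigonometric argument becomes messy, the fallback is to cite the fact that strongly cospectral vertices in an irreducible tridiagonal matrix must be mirror images of each other with persymmetric entries.
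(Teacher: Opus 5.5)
Your argument is correct, but it takes a different route from the paper. The paper never analyses the three weighted paths directly. Instead it recognizes each of them as the matrix $\mathscr{L}_-$ of a half graph of a cycle:
\begin{itemize}
\item $P_m(1,1-q)$ from $C_{2m+1}$, under the involution fixing one vertex and one edge;
\item $P_m(1,1)$ from $C_{2m+2}$, under the involution fixing two vertices;
\item $P_m(1-q,1-q)$ from $C_{2m}$, under the involution fixing two edges.
\end{itemize}
It then combines Corollary~\ref{C1}(1a) with the fact that $C_4$, $C_6$, $C_8$ are the only cycles with pair PST. This rules out vertex PST between every pair of vertices at once and showcases the involution machinery. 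The cost is that the thresholds are inherited from the cycle classification: $n\ge 4$ in (ii) and $n\ge 5$ in (iii) are exactly what survives after discarding $C_4$, $C_6$, $C_8$. This happens to be sharp in (ii), since $P_3$ has PST, but not in (iii).

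Your reduction to $A(P_n)+a\e_1\e_1^T+b\e_n\e_n^T$ with $(a,b)\in\{(0,-1),(0,0),(-1,-1)\}$ replaces that single citation by two others: the classical path result and the absence of Laplacian PST in trees. In exchange it is more transparent, and it gives a stronger conclusion in (iii). Via $\mathscr{L}(P_n(1-q,1-q))=(2-2q)I+qQ(P_n)$ and the bipartite similarity of $Q(P_n)$ and $L(P_n)$, you get no PST for all $n\ge 3$, which is best possible since $P_2$ has PST. Your case (i) is fully self-contained. Its spectrum $2\cos\frac{2k\pi}{2n+1}$, $1\le k\le n$, is exactly the antisymmetric part of the spectrum of $C_{2n+1}$, i.e.\ the paper's observation seen from the other side.

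Three small points.
\begin{itemize}
\item The step you flag as the main obstacle is easy. Using $k=1$ alone gives $(2n+1)\mid 2(u\mp v)$, hence $(2n+1)\mid(u\mp v)$ since $2n+1$ is odd. This is impossible for $1\le u<v\le n$. It already covers $n=2$ and the end vertices, so the separate $n=2$ check and the appeal to \cite[Lemma 2]{kay2} are redundant.
\item Do not rely on your fallback claim that strongly cospectral vertices of an irreducible tridiagonal matrix must be mirror images with persymmetric entries. It is not a standard fact, and you do not need it.
\item The alternative you mention for (iii), treating internal vertices as in (i), would fail. With symmetric loops the mirror pairs $u,\,n+1-u$ are strongly cospectral, so strong cospectrality alone cannot exclude them. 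Keep the $Q\simeq L$ argument.
\end{itemize}
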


\begin{proof}
As we know, $C_4,~C_6,$ and $C_8$ are the only cycles that admit pair PST relative to $\mathscr{L}$. For even $n$, there is a non-trivial involution $\phi$ fixing either two vertices or two edges, while for odd $n$, it fixes exactly one vertex and one edge. In the case where $n$ is odd, the matrix $\mathscr{L}_-=\e_1\e_1^T + \mathscr{L}(P_{\lfloor\frac{n}{2}\rfloor})+(1-q)\e_{\lfloor \frac{n}{2}\rfloor}\e_{\lfloor \frac{n}{2}\rfloor}^T$. Since odd cycles do not exhibit pair PST relative to $\mathscr{L}$, it follows from Corollary \ref{C1}(1a) that the graph relative to $\mathscr{L}_-$ has no vertex PST. Hence, the associated half graph is the path  $P_n(\omega_1,\omega_2)$, with $\omega_1=1$ and $\omega_2=1-q$. Similarly, for even $n$, the matrix $\mathscr{L}_-=(\e_1\e_1^T+\e_{\frac{n-2}{2}}\e_{\frac{n-2}{2}}^T)+\mathscr{L}(P_{\frac{n-2}{2}})$ whenever $\phi$ fixes two vertices, and $\mathscr{L}_-=(1-q)(\e_1\e_1^T+\e_{\frac{n}{2}}\e_{\frac{n}{2}}^T)+\mathscr{L}(P_{\frac{n}{2}})$ whenever $\phi$ fixes two edges. For even $n \ge 10$, the cycle $C_n$ does not exhibit pair PST relative to $\mathscr{L}$. Hence, by Corollary \ref{C1}(1a), the graph associated with $\mathscr{L}_-$ has no vertex PST. The corresponding half graphs are the paths $P_n(\omega_1,\omega_2)$ with $\omega_1=\omega_2=1$ when $\phi$ fixes two vertices, and $\omega_1=\omega_2=1-q$ whenever $\phi$ fixes two edges. This completes the proof.
\end{proof}

The case in Theorem~\ref{T10}(ii) with $q=-1$ corresponds to the Laplacian matrix. For $P_n(1,1)$, we have $L = 2I - A(P_n)$. Thus, $P_n$ does not admit vertex PST relative to $A$ whenever $n \geq 4,$ which recovers a well-known result first shown in \cite{christandl}. Meanwhile, the case $q=1$ in Theorem~\ref{T10}(iii) corresponds to the signless Laplacian matrix and the result reduces to the path $P_n$ with $n \geq 5$ vertices not admitting vertex PST.
We also include observations on pair PST on  $P_n(\omega)$. This graph admits a non-trivial involution $\phi$ that fixes an edge when $n$ is even, and fixes a vertex when $n$ is odd. The matrix $\mathscr{L}_-$ associated with the involution $\phi$ can be evaluated as 
$$\mathscr{L}_{-} =\begin{cases}
    \omega\e_1\e_1^T+\mathscr{L}(P_k)+(1-q)\e_k\e_k^T, & \text{if } n=2k,\\
    \omega\e_1\e_1^T+\mathscr{L}(P_k)+\e_k\e_k^T, &\text{if } n=2k+1.
\end{cases}$$
Since $P_2$ and $P_3$ admit vertex PST relative to $A$, Corollary \ref{C1}(1a) yields the following.
\begin{exm}\label{Ex2}
Let $P_n(\omega)$ denote the path on $n$ vertices having loop $\omega$ only at the end vertices. Then the following hold relative to the generalized Laplacian matrix:
\begin{enumerate}
\item $P_3(1)$ admits PST between $\frac{1}{\sqrt{2}}(\e_1-\e_2)$ and $\frac{1}{\sqrt{2}}(\e_2-\e_3)$ at time $\frac{\pi}{q\sqrt{2}}.$ 
\item $P_4\ob{1-q}$ admits PST between $\frac{1}{\sqrt{2}}(\e_1-\e_4)$ and $\frac{1}{\sqrt{2}}(\e_2-\e_3)$ at time $\frac{\pi}{2q}.$
\item $P_5(1)$ admits PST between $\frac{1}{\sqrt{2}}(\e_1-\e_5)$ and $\frac{1}{\sqrt{2}}(\e_2-\e_4)$ at time $\frac{\pi}{2q}.$
\item $P_7(1)$ admits PST between $\frac{1}{\sqrt{2}}(\e_1-\e_7)$ and $\frac{1}{\sqrt{2}}(\e_3-\e_5)$ at time $\frac{\pi}{q\sqrt{2}}.$
\end{enumerate}
\end{exm}

Finally, we note that pair PST relative to $\mathscr{L}$ can be induced in $P_n$ by adding a few additional edges and adding loops on end vertices of suitable weights. 

\begin{thm}
\label{n-2}
Let $P_n$ be a path on $n \geq 6$ vertices. After adding the edges $\{2,n-2\}$ and $\{3,n-1\}$, and assigning loops of weight $2$ to the end vertices $1$ and $n$, the resulting graph admits pair PST between $\frac{1}{\sqrt{2}}(\mathbf{e}_1 - \mathbf{e}_n)$ and $\frac{1}{\sqrt{2}}(\mathbf{e}_2 - \mathbf{e}_{n-1})$ at $\frac{\pi}{2q}$ relative to $\mathscr{L}$.
\end{thm}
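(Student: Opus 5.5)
The plan is to apply Corollary \ref{C1}(1a) to the reflection involution $\phi(i)=n+1-i$ of the perturbed path. Denote the resulting graph by $X$. First I check that $\phi$ is an automorphism of $X$ that respects the loop weights. It maps the path to itself. It sends the edge $\{2,n-2\}$ to $\{n-1,3\}$, and conversely, so the added edge set is invariant. The loops of weight $2$ sit at $1$ and $n=\phi(1)$. The fixed set $S$ is empty when $n$ is even and $\{\frac{n+1}{2}\}$ when $n$ is odd. I take the half graph $G$ on $T=\{1,\dots,\lfloor n/2\rfloor\}$, so that $1,2,3\in T$ because $n\ge 6$.

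Next I compute $\mathscr{L}_-=\mathscr{L}_G-qA_\phi$, working only near vertices $1,2,3$. I use the convention of the section on paths: a loop of weight $\omega$ contributes $\omega$ to the degree and nothing to $qA$. The degrees are then:
\begin{itemize}
\item vertex $1$: neighbour $2$ plus the loop, so degree $1+2=3$;
\item vertex $2$: neighbours $1$, $3$ and $n-2$, so degree $3$;
\item vertex $3$: neighbours $2$, $4$ and $n-1$, so degree $3$.
\end{itemize}
Here $n\ge 6$ guarantees that $n-2\ne 3$ and that $\{2,n-2\}$, $\{3,n-1\}$ are genuinely new edges. Now $\phi(3)=n-2$ and $\phi(2)=n-1$. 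Hence the added edges $\{2,\phi(3)\}$ and $\{3,\phi(2)\}$ are precisely the cross edges recorded in $A_\phi$, giving $(A_\phi)_{23}=(A_\phi)_{32}=1$. No other cross edge touches vertices $1$ or $2$.

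The key cancellation follows. The $(2,3)$ entry of $\mathscr{L}_-$ equals $q\,A(G)_{23}-q(A_\phi)_{23}=q-q=0$. Vertex $1$ is adjacent only to $2$ in $G$, and has no cross edges. So $\mathscr{L}_-$ is block diagonal, with a $2\times 2$ block on $\{1,2\}$ equal to $3I+qA(P_2)$. This block gives vertex PST between $1$ and $2$ at $\frac{\pi}{2q}$, since $e^{it(3I+qA(P_2))}=e^{3it}\big(\cos(qt)I+i\sin(qt)A(P_2)\big)$. The remaining block is irrelevant, because $U_{\mathscr{L}_-}(t)$ is block diagonal as well. Corollary \ref{C1}(1a) then gives PST between $\frac{1}{\sqrt2}(\e_1-\e_n)$ and $\frac{1}{\sqrt2}(\e_2-\e_{n-1})$ at $\frac{\pi}{2q}$.

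The only delicate point is the boundary case $n=6$. There $T=\{1,2,3\}$, and the path edge $\{3,4\}$ is itself a cross edge, so $(A_\phi)_{33}=1$. This only changes the diagonal entry of vertex $3$, which lies in the decoupled block. Likewise, for odd $n$ the fixed middle vertex never enters $\mathscr{L}_-$. So the argument is uniform in $n\ge 6$.
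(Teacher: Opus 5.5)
Your proof is correct, but it uses a different involution from the paper.

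The paper takes the partial swap $\phi=(1~n)(2~n-1)$, which fixes every vertex $3,\ldots,n-2$; the two added edges are exactly what make this an automorphism. Its half graph is then just $T=\{1,2\}$. Neither $1$ nor $2$ is adjacent to $n-1$ or $n$, so $A_\phi=O$. The edges from vertex $2$ into the fixed middle, $\{2,3\}$ and $\{2,n-2\}$, go into $A_S$, which does not appear in $\mathscr{L}_-$. Hence $\mathscr{L}_-=\mathscr{L}_G=\begin{bmatrix}1&q\\ q&3\end{bmatrix}$ immediately, and this becomes $3I+qA(P_2)$ once the loops of weight $2$ are added.

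With the full reflection $i\mapsto n+1-i$, your half graph is $\{1,\ldots,\lfloor n/2\rfloor\}$. You therefore have to produce the same decoupling by hand, through the cancellation $qA(G)_{23}-q(A_\phi)_{23}=0$. You also need the extra bookkeeping for $n=6$ and for odd $n$, which you handle correctly.

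The paper's choice is more economical, since the decoupling is built into the choice of fixed set. Yours has the mild advantage of using the visible global symmetry. It also makes explicit why the edges $\{2,n-2\}$ and $\{3,n-1\}$ are needed: they cancel the $2$--$3$ coupling in the reflection-antisymmetric sector. Both routes end with the same block $3I+qA(P_2)$ on $\{1,2\}$ and the same application of Corollary~\ref{C1}(1a).
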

\begin{proof}
Let $n \geq 6$. If we add the edges $\{2,n-2\}$ and $\{3,n-1\}$ in $P_n$, then the involution $\phi=(1 ~ n)(2 ~n-1)$ in the resulting graph fixes vertices $3,4,\ldots, n-3$. In this case, the matrix $\mathscr{L}_-=\begin{bmatrix}
    1& q\\ q &3
\end{bmatrix}$. 
If we further assign loops of weight 2 at the end vertices $1$ and $n$, then $\mathscr{L}_-=3I+qA(P_2)$, which yields vertex PST between the vertices $1$ and $2$ at $\frac{\pi}{2}$ relative to $\mathscr{L}_-$. From Corollary \ref{C1}(1a), it follows that pair PST occurs between $\frac{1}{\sqrt{2}}(\mathbf{e}_1 - \mathbf{e}_n)$ and $\frac{1}{\sqrt{2}}(\mathbf{e}_2 - \mathbf{e}_{n-1})$ at $\frac{\pi}{2q}$ relative to $\mathscr{L}$.      
\end{proof}

\section{Open questions}\label{sec6} 

In this paper, we investigated pair PST in graphs with involutions relative to the generalized adjacency matrix. To inspire future work, we pose the following questions.

First, in light of our result in Theorem \ref{planar}, is it the case that almost all simple unweighted planar graphs (resp., almost all simple unweighted trees) can be assigned a loop of weight one to \textit{exactly} one vertex such that the resulting graph admits pair PST relative to $\mathscr{L}$?

We also ask: for a fixed $q\neq 0$, are there infinite families of simple unweighted planar graphs (resp., simple unweighted trees) that admit pair PST relative to $\mathscr{L}=\Delta+qA$?

\section*{Acknowledgments}
H.~Monterde is supported in part by the Pacific Institute for the Mathematical Sciences through the PIMS-Simons postdoctoral fellowship. We thank Professor Sivaramakrishnan Sivasubramanian (IIT Bombay, India) for his valuable suggestions.

\bibliographystyle{abbrv}
\bibliography{Ref}
\end{document}